\documentclass[11pt]{amsart}
\usepackage{amsmath}
\usepackage{bm}
\usepackage{dsfont}
\usepackage{mathrsfs}
\usepackage{color}
\usepackage[T1, OT1]{fontenc}
\usepackage{hyperref}
\usepackage{subfig}
\usepackage{float}
\usepackage{thmtools,thm-restate}
\usepackage{setspace}
\allowdisplaybreaks
\tolerance=2400

\DeclareSymbolFont{bbold}{U}{bbold}{m}{n}
\DeclareSymbolFontAlphabet{\mathbbold}{bbold}
\newcommand{\ind}{\mathbbold{1}}

\numberwithin{equation}{section}

\theoremstyle{plain} \newtheorem{theorem}{Theorem}[section]
\theoremstyle{plain} \newtheorem{lemma}[theorem]{Lemma}
\theoremstyle{plain} \newtheorem{assumption}[theorem]{Assumption}
\theoremstyle{plain} \newtheorem{corollary}[theorem]{Corollary}
\theoremstyle{plain} \newtheorem{proposition}[theorem]{Proposition}
\theoremstyle{plain} \newtheorem{remark}[theorem]{Remark}
\theoremstyle{definition} 
\theoremstyle{plain} \newtheorem{definition}[theorem]{Definition}

\theoremstyle{plain} 

\addtolength{\textheight}{1cm}
\addtolength{\topmargin}{-1cm}
\addtolength{\textwidth}{2cm}
\addtolength{\textheight}{1cm}
\addtolength{\evensidemargin}{-1cm}
\addtolength{\oddsidemargin}{-1cm}



\newcommand{ \R}{ \mathbb R }
\newcommand{ \C}{ \mathbb C }
\newcommand{\E}{ \mathbb E}
\newcommand{\N}{ \mathbb N}

\renewcommand{\Pr}{ \mathbb P}

\newcommand{\Qr}{ \mathbb Q}

\newcommand{\cD}{\mathcal{D}}

\newcommand {\vp}{\tilde\varphi}
\newcommand {\tth}{\tilde\theta}
\newcommand {\ve}{\varepsilon}

\begin{document}
\title[Quasistationary distributions ]{Quasistationary distributions for one-dimensional diffusions with singular boundary points}

\author[A. Hening]{Alexandru Hening }
\address{Department of Mathematics\\
Tufts University\\
Bromfield-Pearson Hall\\
503 Boston Avenue\\
Medford, MA 02155\\
United States
}
\email{alexandru.hening@tufts.edu}
\author[M. Kolb]{Martin Kolb}
\address{Department of Mathematics \\
 University of Paderborn \\
 Warburger Str. 100 \\
 33098 Paderborn\\
 Germany}
 \email{kolb@math.uni-paderborn.de}

\renewcommand{\thefootnote}{\fnsymbol{footnote}}
\footnotetext{\emph{AMS subject classification} Primary 58J05, 60J60; Secondary 35J10, 35P05, 47F05.}
\renewcommand{\thefootnote}{\arabic{footnote}}

\renewcommand{\thefootnote}{\fnsymbol{footnote}}
\footnotetext{ \textbf{Keywords.} One-dimensional diffusion, quasistationary distribution, Yaglom limit, $Q$ process. }
\renewcommand{\thefootnote}{\arabic{footnote}}

\begin{abstract}
In the present work we characterize the existence of quasistationary distributions
for diffusions on $(0,\infty)$ allowing singular behavior at $0$ and $\infty$. If absorption at
0 is certain, we show that there exists a quasistationary distribution as soon as the spectrum
of the generator is strictly positive. This complements results of Collet et al. (Ann. Probab. 2009) and
Kolb and Steinsaltz (Ann. Probab. 2012) for $0$ being a regular boundary point and extends results by Collet et al. (Ann. Probab. 2009) on singular diffusions.

\end{abstract}
\maketitle

\tableofcontents

\section{Introduction and main results}

Throughout this paper we consider the one-dimensional diffusion $(X_t)_{t\geq 0}$ that is the solution to the stochastic differential equation (SDE)
\begin{equation}\label{e:gen_SDE}
dX_t = dB_t -b(X_t)\,dt,
\end{equation}
where $(B_t)_{t\geq 0}$ is a standard one-dimensional Brownian motion and $b:(0,\infty)\to\R$ is the drift.

We make the following assumption throughout the paper.
\begin{assumption}\label{a:drift}
The drift is continuously differentiable on $(0,\infty)$, that is $b\in C^1((0,\infty))$.
\end{assumption}

The first hitting time of $0$ by $(X_t)_{t\geq 0}$ is denoted by
\begin{equation}\label{e:T0}
  T_0:= \inf \{t\geq 0: X_t=0\}
\end{equation}
We are interested in the existence of quasistationary distributions of $(X_t)_{t\geq 0}$ with absorption at $0$. These are invariant measures of the evolution \textit{conditioned on not hitting zero}.

\begin{definition}\label{d:qsd_1}
A probability measure $\nu$ on $(0,\infty)$ satisfying
\[
\Pr^\nu (X_t\in A~|~T_0>t) = \nu(A)
\]
for all Borel sets $A\subset (0,\infty)$ is called a quasistationary distribution of $(X_t)_{t\geq 0}$.
\end{definition}
The probability measure $\Pr^\nu$ expresses the fact that $X_0$ has initial distribution $\nu$. Note that the event $\{T_0>t\}$ can be interpreted as survival until at least time $t$. Studying the existence, uniqueness, and the properties of quasistationary distributions for one-dimensional diffusions as well as for Markov chains are challenging tasks. In recent years, these problems have attracted the interest of both probabilists and biologists. Quasistationary distributions have been used in mathematical biology (see \cite{C09}), demography (see \cite{ES04}) and in models of neutron transport (see \cite{CV14}). Some recent developments and a description of the role of quasistationary distributions in models derived from ecology and population dynamics are described in the excellent survey article \cite{MV12}.

One possible approach to analyzing the existence question for quasistationary distributions (QSD) is to look at the following limit.

\begin{definition}\label{d:Yaglom}
Let $\nu$ be a probability measure on $(0,\infty)$. The Yaglom limit of $(X_t)_{t\geq 0}$ is defined (if it exists) by
\begin{equation}\label{e:yaglom}
\lim_{t\to \infty}\Pr^\nu(X_t\in\cdot ~|~ T_0>t).
\end{equation}
\end{definition}

It is easy to see that if a non-trivial Yaglom limit exists, the limit is a quasistationary distribution (see for example \cite{C09}). Therefore, in order to analyse the existence problem for QSDs it is reasonable to look at Yaglom limits. It will turn out that in many situations the Yaglom limit is either non-trivial or trivial in the following sense.

\begin{definition}
We say that $(X_t)_{t\geq 0}$, started with the initial distribution $\nu$, \textit{converges to the quasistationary distribution $u:(0,\infty)\to\R$}, if $u$ is a non-negative function and for all Borel measurable sets $A\subset (0,\infty)$
\begin{equation}\label{e:conv_nontrivial}
\lim_{t\to \infty} \Pr^\nu (X_t\in A~|~T_0>t) = \frac{\int_A u(x)\rho(dx)}{\int_0^\infty u(x)\rho(dx)},
\end{equation}
where $\rho(dx)=\rho(x)dx= \exp\left(-\int_1^x 2b(s)\,ds\right) dx$ is the speed measure of the diffusion.

We say that $(X_t)_{t\geq 0}$, started with the initial distribution $\nu$, \textit{escapes to infinity} if for every $a\in(0,\infty)$
\[
\lim_{t\to\infty} \Pr^\nu(X_t\in [0,a)~|~T_0>t)=0.
\]
\end{definition}

In many cases the convergence to a quasistationary distribution is independent of the initial distribution $\nu$. This follows from our characterization of the convergence to a quasistationary distribution via the strict negativity of the exponential rate of decay of $\Pr^\nu(T_0>t)$ and Harnack's inequality.

The main contribution of this work is a better understanding of the Yaglom limit for one-dimensional diffusions. While in \cite{KS12} the existence problem has been solved for regular drift functions $b\in C^1([0,\infty))$ and internal killing, the present work is concerned with drift functions $b\in C^1((0,\infty))$ having a \textit{singularity} at $0$ -- we will in particular look at variants of the Bessel process corresponding to $b(x)=\frac{a}{x}$.

An interesting application of the singular case is the study of generalized Feller diffusions of the type
\begin{equation}\label{e:feller}
dZ_t = h(Z_t)dt - \sqrt{Z_t}dB_t,
\end{equation}
where $h$ is a smooth enough function. Such stochastic differential equations arise as scaling limits of certain discrete population models as shown in \cite{C09}. If one lets $Y_t:=2\sqrt{Z_t}$ then an easy application of It{\^o}'s Lemma together with \eqref{e:feller} yields
\begin{equation}\label{e:Feller2}
dY_t = -dB_t -\frac{1}{Y_t}\left(\frac{1}{2}-2h\left(\frac{Y_t^2}{4}\right)\right)dt.
\end{equation}
The SDE for $(Y_t)_{t\geq 0}$ is therefore of the form \eqref{e:gen_SDE} and the drift function has a singularity at zero.

The main arguments of the present work are based on analytic extensions of results given in \cite{M61} to a wider class of diffusions. Using these analytic results one can understand the convergence to quasistationary distributions on compact sets $A\subset (0,\infty)$. From this, with the help of an elegant argument from \cite{ES07}, we can deduce that $(X_t)_{t\geq 0}$ either converges to a quasistationary distribution or escapes to infinity. In many instances this dichotomy can be reduced to understanding the exponential rate of decay of $\Pr^\nu(T_0>t)$. This leads to a characterization of the convergence of $(X_t)_{t\geq 0}$ via the positivity of the bottom of the spectrum of the diffusion operator $L$ corresponding to
\[
\tau = -\frac{1}{2}\frac{d^2}{dx^2} + b(x)\frac{d}{dx}.
\]
Throughout this paper we denote the bottom of the spectrum by
\begin{equation}\label{e:bottom}
\lambda_0 :=\inf \text{spec} (L).
\end{equation}

After tackling the question of the existence of quasistationary distributions, one may also want to consider the $Q$-process. This is the process that arises from the original diffuson when conditioned on non-extinction

\[
\lim_{t\to\infty} \Pr^x(X_{\bullet}\in\cdot~|~T_0>t).
\]

Our results can be used in conjunction with classical techniques to characterize the existence of the $Q$-process for a large class of diffusions.

\subsection{Main results}Let us start by recalling Feller's boundary classification for one-dimensional diffusions. \textit{Accessible} boundary points (points which can be reached in finite time with positive probability) are divided into \textit{regular} and \textit{exit} points, whereas \textit{inaccessible} points (points which cannot be reached in finite time with positive probability) are divided into \textit{entrance} and \textit{natural} points.

More specifically, we have the following.

\begin{definition}
Let $c\in(0,\infty)$ be given and remember that the density of the speed measure of \eqref{e:gen_SDE} is
\[
\rho(x)= \exp\left(-\int_1^x2b(s)\,ds\right),~~ x\in(0,\infty).
\]
The point $\infty$ is \emph{accessible}, if $\int_c^\infty\rho(x)^{-1}\int_c^x\rho(y)\,dy\,dx<\infty$,
and otherwise \emph{inaccessible}. If $\infty$ is an accessible boundary point, then it is called \emph{regular} if and only if $\int_c^\infty\rho(x)\int_c^x\rho(y)^{-1}\,dy\,dx<\infty.$ If $\infty$ is accesible and $\int_c^\infty\rho(x)\int_c^x\rho(y)^{-1}\,dy\,dx=\infty$
then $\infty$ is called an \emph{exit boundary}. If $\infty$ is inaccesible, then it is an \emph{entrance boundary}, if and only if $\int_c^\infty\rho(x)\int_c^x\rho(y)^{-1}\,dy\,dx<\infty$. If $\infty$ is inaccesible and $\int_c^\infty\rho(x)\int_c^x\rho(y)^{-1}\,dy\,dx=\infty$ then $\infty$ is called \emph{natural}. A similar classification holds for the boundary point $0$.

\end{definition}

We are interested in conditioning on not hitting zero. Therefore, only the case when $0$ is accessible needs to be considered because otherwise the conditioning would be redundant. The Yaglom limit in the regular case, first discussed in \cite{CMSM95} (a gap in the arguments was pointed out in \cite{ES07}), was fully classified in \cite{KS12}. As a result, in this paper we only consider the singular case when $0$ is an exit boundary.

The boundary at $\infty$ is generally assumed to be inaccessible. This is a natural assumption as far as biological applications (see \cite{C09}) are concerned since populations do not blow up to $\infty$ in nature.

As already observed in \cite{CMSM95} for $0$ regular and $\infty$ natural, it is crucial if certain absorption in the following sense holds or not.

\begin{definition}\label{d:absorption}
We say that absorption is \textit{certain} if $\Pr^x(T_0<\infty)=1$ for any starting point $x\in (0,\infty)$.
\end{definition}

Equivalently, certain absorption can be reformulated by the finiteness of the speed measure
\[
\Pr^x(T_0<\infty)<1 \iff \int_1^\infty \rho(s)^{-1}\,ds <\infty.
\]This quantifies the minimum strength of the drift towards $0$ one needs in order to obtain certain absorption.
The cases in which absorption is not certain are easy to treat since one can show that for $a>0$,
\[
\Pr^\nu(X_t\in[0,a), T_0>t)\downarrow 0 ~\text{as}~t\to\infty.
\]
Combining this with the boundedness from below of $\Pr^\nu(T_0>t)$, implies that
\[
\lim_{t\to\infty}\Pr^\nu(X_t\in A~|~T_0>t) = \lim_{t\to\infty}\frac{\Pr^\nu(X_t\in A, T_0>t)}{\Pr^\nu(T_0>t)}=0.
\]
This shows that if absorption is not certain the process $(X_t)_{t\geq 0}$ escapes to infinity.

To get around uncertain absorption, one can condition on the event of eventual absorption. This can be done using an $h$-transform. The $h$-transformed process will be have certain absorption and so one can then apply our main results. This is discussed in Remark \ref{r:doob}.

 For the case when $\infty$ is a natural boundary, we need a regularity condition on the semigroup generated by the diffusion.
\begin{assumption} \label{a:1}
Let $z\in (0,\infty)$ and $t>0$. Then
\[
e^{-tL}\mathbf{1}_{[0,z]}\in L^2((0,\infty),\rho)
\]
and
\[
\lim_{\varepsilon\to 0} e^{-tL}\mathbf{1}_{[\varepsilon,z]} =e^{-tL}\mathbf{1}_{[0,z]}
\]
in $L^2((0,\infty),\rho):=\left\{f:(0,\infty)\to\R, f~\text{measurable and}~\int_0^\infty|f(x)|^2\,\rho(dx)<\infty\right\}$.

\end{assumption}
The next result tells us under what conditions, when $\infty$ is a natural boundary point, we have the convergence of $(X_t)_{t\geq 0}$ to a quasistationary distribution. As before, $\lambda_0$ denotes the bottom of the spectrum of $L$.
\begin{restatable}{theorem}{natural}\label{t:exit_natural}
Suppose that $0$ is an exit boundary, $\infty$ is a natural boundary and Assumptions \ref{a:drift} and \ref{a:1} hold. We have the following classification.
\begin{itemize}
\item[i)] If absorption is not certain and $\lambda_0>0$, then $(X_t)_{t\geq 0}$ escapes to infinity exponentially fast.
\item[ii)] If absorption is certain and $\lambda_0>0$, then $(X_t)_{t\geq 0}$ converges to a quasistationary distribution. This quasistationary distribution is the unique Yaglom limit and it attracts all the compactly supported initial distributions of $X_0$.
\end{itemize}
\end{restatable}
\begin{remark}
We do not claim that Assumption \ref{a:1} is necessary but we were not able to prove the result without it.
\end{remark}
The following result gives sufficient conditions on the drift $b$ guaranteeing that the spectrum is positive.
\begin{restatable}{proposition}{pos}\label{p:positivity_spectrum}
Assume that $0$ is an exit boundary and that absorption is certain. The bottom of the spectrum $\lambda_0$ is strictly positive if and only if for some $a\in (0,\infty)$
\[
A(b,a):= \sup_{x>a}\left(\int_a^x \rho(y)^{-1}\,dy\right)\left(\int_x^\infty\rho(y)\,dy\right)<\infty
\]
In particular the strict positivity of $\lambda_0$ implies $\int_1^\infty \rho(y)\,dy<\infty$.
\end{restatable}

\begin{remark}
In the proof of Theorem of \ref{t:exit_natural} we show that if absorption is not certain, then $\Pr^\nu(X_t\in[0,a) ~|~ T_0>t)$ decays at least exponentially fast with rate $\lambda_0$. When $0$ is regular and $\infty$ natural this result was proved in Theorem 4 of \cite{MSM01}.
\end{remark}

Theorem \ref{t:exit_natural} immediately implies the following Corollary.
\begin{corollary}
If $0$ is an exit boundary, $\infty$ is a natural boundary, Assumption \ref{a:1} is fulfilled and absorption is certain, then there exists a quasistationary distribution if and only if $\lambda_0>0$.

If $0$ is an exit boundary, $\infty$ is a natural boundary, absorption is not certain and $\lambda_0>0$ then there are no quasistationary distributions.

\end{corollary}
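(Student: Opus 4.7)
The two assertions package Theorem~\ref{t:exit_natural} together with its converses, so my plan is to invoke the theorem for the already-established directions and supply a spectral argument for the necessity in the first case and a short contradiction argument for the second.

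For the ``if'' direction of the first assertion there is nothing to do: Theorem~\ref{t:exit_natural} already produces a QSD whenever absorption is certain and $\lambda_0>0$. For the converse I would assume that a QSD $\nu$ exists. Combining the QSD identity with the Markov property at time $s$ gives $S(t+s)=S(t)S(s)$ for $S(t):=\Pr^\nu(T_0>t)$, so $S(t)=e^{-\theta t}$ for some $\theta\geq 0$; certain absorption forces $S(t)\to 0$ and hence $\theta>0$. Writing $u:=d\nu/d\rho$, the dual relation $e^{-tL}u=e^{-\theta t}u$ exhibits $u$ as a strictly positive eigenfunction of $L$ with eigenvalue $\theta$, and standard Sturm--Liouville uniqueness of the positive ground state then identifies $\theta=\lambda_0$, so $\lambda_0>0$.

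For the second assertion I would just apply Theorem~\ref{t:exit_natural}: under uncertain absorption with $\lambda_0>0$ the process escapes to infinity, so for every $a>0$
\[
\lim_{t\to\infty}\Pr^\nu(X_t\in[0,a)\mid T_0>t)=0.
\]
If $\nu$ were a QSD the left-hand side would be the constant $\nu([0,a))$, forcing $\nu([0,a))=0$ for all $a>0$ and contradicting that $\nu$ is a probability measure on $(0,\infty)$.

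The main obstacle is the spectral identification $\theta=\lambda_0$ in the necessity of the first assertion. The formal eigenvalue equation $Lu=\theta u$ must be justified rigorously: one needs enough regularity of the density $u$ near the singular exit point $0$ and the correct boundary condition there. A natural route is to smooth $\nu$ through the semigroup (using Assumption~\ref{a:1} to control behavior at infinity) to obtain a strong solution on $(0,\infty)$, after which the usual one-dimensional Perron--Frobenius/oscillation argument forces any strictly positive eigenfunction to sit at the bottom of the spectrum.
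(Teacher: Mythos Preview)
The paper states this corollary as an immediate consequence of Theorem~\ref{t:exit_natural} and gives no separate argument, so there is no ``paper proof'' to compare against; I am assessing your filling-in of what ``immediately'' should mean.

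For the first assertion your outline is correct, but you aim for more than is needed: to conclude $\lambda_0>0$ you do not require the identification $\theta=\lambda_0$, only the inequality $\theta\le\lambda_0$. This is the easy half of the Allegretto--Piepenbrink/criticality characterisation (a positive solution of $(\tau-\lambda)u=0$ can exist only for $\lambda\le\lambda_0$; see the reference \cite{P95} already used in Proposition~\ref{p:positivity_spectrum}). The density $u=d\nu/d\rho$ exists automatically via the continuous transition kernel, and the eigenmeasure relation together with the symmetry of $p(t,x,y)$ makes $u$ a positive weak solution of $\tau u=\theta u$, so the regularity and $L^2$-membership issues you flag can be sidestepped.

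For the second assertion there is a genuine gap. Theorem~\ref{t:exit_natural} (cf.\ the proof of Theorem~\ref{t:1}) establishes escape to infinity only for \emph{compactly supported} initial distributions, and a hypothetical QSD $\nu$ need not have compact support, so you cannot invoke the conclusion directly with initial law $\nu$. The gap is repairable---the pointwise estimate $\Pr^x(X_t\in K,\,T_0>t)\to 0$ from the proof, dominated convergence, and the lower bound $\Pr^\nu(T_0>t)\ge\Pr^\nu(T_0=\infty)>0$ extend escape to infinity to arbitrary $\nu$---but a cleaner route avoids the theorem entirely. From $\Pr^\nu(T_0>t)=e^{-\theta t}$: the case $\theta=0$ is excluded because $0$ is exit, so $\Pr^x(T_0<\infty)>0$ for every $x$ and hence $\Pr^\nu(T_0<\infty)>0$; the case $\theta>0$ is excluded because uncertain absorption gives $\Pr^\nu(T_0>t)\ge\Pr^\nu(T_0=\infty)>0$, contradicting $e^{-\theta t}\to 0$. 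This argument does not even use the hypothesis $\lambda_0>0$.
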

\begin{proof}
If absorption is certain and $\lambda_0>0$ then Theorem \ref{t:exit_natural} gives the existence of a quasistationary distribution. 

If instead, one assumes that there exists a quasistationary distribution $\nu$ then it is known that $T_0$ must have an exponential moment (see Proposition 3 from \cite{MV12}). This then implies that $\lambda_0>0$.
\end{proof}

Intuitively, convergence to a quasistationary distribution should become easier for entrance boundaries at $\infty$ since this implies there is a strong drift towards $0$. Hence, escape to infinity gets more difficult and convergence to a quasistationary distribution more likely. The next theorem shows that this is true even in the absence of Assumption \ref{a:1}.

\begin{restatable}{theorem}{entrance}\label{t:exit_entrance}
Suppose Assumption \ref{a:drift} holds. If $0$ is an exit boundary and $\infty$ is an entrance boundary, then $\lambda_0>0$ and $(X_t)_{t\geq 0}$ converges to the unique quasistationary distribution.
\end{restatable}

Note that we do not have to distinguish between certain and uncertain absorption. For $\infty$ being an entrance boundary, absorption at $0$ is certain. In \cite{C09}, the authors have shown that $(X_t)_{t\geq 0}$ converges to a quasistationary distribution under a set of assumptions which ensure that the spectrum of $L$ is discrete and that the lowest eigenfunction is integrable. The fact that this is always true is new and completes the picture. Furthermore, combined with  \cite{C09}[Theorem 7.3], one can fully characterize the case when $\infty$ is an entrance boundary and $0$ an exit boundary.

\begin{corollary}
Suppose Assumption \ref{a:drift} holds. If $0$ is an exit boundary, then $\infty$ is entrance if and only if there is precisely one quasistationary distribution $\bar \nu$. Furthermore, $\bar\nu$ attracts all initial distributions $\nu$, i.e. for any $\nu$
\[
\lim_{t\to \infty}\Pr^\nu (X_t\in\cdot~|~T_0>t)=\bar\nu(\cdot).
\]
\end{corollary}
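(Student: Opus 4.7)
The plan is to split the biconditional into its two implications and combine Theorem \ref{t:exit_entrance} above with Theorem 7.3 of \cite{C09}.

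For the forward implication, assume $\infty$ is an entrance boundary. Theorem \ref{t:exit_entrance} already yields $\lambda_0>0$ and the convergence
\[
\Pr^\nu(X_t\in\cdot\,|\,T_0>t)\longrightarrow \bar\nu(\cdot),
\]
which is precisely the attractivity claim. Uniqueness of the QSD then follows from the usual fixed-point argument: if $\mu$ is any QSD, then by definition $\Pr^\mu(X_t\in\cdot\,|\,T_0>t)=\mu(\cdot)$ for every $t\geq 0$, and passing to the limit using the convergence started from $\mu$ forces $\mu=\bar\nu$.

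For the converse, I would argue by contraposition. Under the standing assumption that $\infty$ is inaccessible, the only alternatives are $\infty$ natural or $\infty$ entrance. Suppose $\infty$ is natural. Then Theorem 7.3 of \cite{C09} (applicable because $0$ is exit) produces an entire one-parameter family of distinct quasistationary distributions parametrised by the exponential decay rate in $(0,\lambda_0]$, contradicting the assumed uniqueness. Hence $\infty$ must be entrance.

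The main technical obstacle in this plan is to ensure that the convergence in Theorem \ref{t:exit_entrance} is genuinely available starting from \emph{every} initial distribution $\nu$, not just compactly supported ones; this is what is needed for the fixed-point step to apply to an arbitrary QSD $\mu$. The standard route is to exploit the coming-down-from-infinity property characteristic of entrance boundaries: for each $\delta>0$ there exists $a(\delta)<\infty$ such that $\inf_{x\in(0,\infty)}\Pr^x(X_\delta\leq a(\delta))$ is bounded away from zero, which via the Markov property propagates Yaglom convergence from compactly supported initial laws to arbitrary $\nu$. Once this is in place, the corollary is essentially bookkeeping on top of the two cited theorems.
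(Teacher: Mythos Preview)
Your proposal is correct and matches the paper's approach exactly: the paper does not give an independent proof of this corollary but simply states that it follows from combining Theorem~\ref{t:exit_entrance} with Theorem~7.3 of \cite{C09}, which is precisely the decomposition you use. Your fixed-point argument for uniqueness and the coming-down-from-infinity step to upgrade Theorem~\ref{t:exit_entrance} from compactly supported to arbitrary initial laws are the natural details the paper leaves implicit; the latter is indeed needed, since a putative second QSD $\mu$ has no reason to be compactly supported, and your sketch of how the entrance property supplies the required uniform control is the standard and correct route.
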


Our strategy is similar to the one from \cite{KS12} but in contrast to \cite{KS12}, where the authors could rely on analytic results available in the literature, it is necessary to develop, due to the singularity at $0$, a good deal of results concerning the spectral representation of $L$. The main problem is the fact that the cone of positive $\lambda_0$-harmonic functions might be two-dimensional. We have to choose a proper $\lambda_0$-harmonic function. Our analytic consideration will dictate the choice and finally allow us to characterize the existence of quasistationary distributions. For clarity we chose to prove most of the results for diffusions without internal killing. However, we expect that one can easily generalize our results to also cover killed diffusions (see Remark \ref{r:killing}).

\subsection{Comparison with existing results} We briefly discuss how our results are related to the existing literature.

We we were partly inspired by \cite{C09} - this is one of the few papers which allows $0$ to be a singular boundary. One of the aims of \cite{C09} was to understand the existence and uniqueness of quasistationary distributions for the generalized Feller diffusion $(Z_t)_{t\geq 0}$ that is given by \eqref{e:feller}. We present their results for completeness. Their assumptions are as follows:

\begin{itemize}
  \item [(H0)] The drift satisfies $b\in C^1((0,\infty))$.
  \item [(H1)] For all $x>0$, $\Pr^x(T_0<T_\infty)=1$.
  \item [(H2)] The drift satisfies $\inf_{s>0} (b^2(s)-b'(s))>-\infty$ and $\lim_{s\to\infty} (b^2(s)-b'(s))=+\infty$.
  \item [(H3)] The drift satisfies $$\int_0^1 \frac{1}{b^2(s)-b'(s)+C+2}\rho(s)\,ds<\infty$$ where $C=-\inf_{s>0} (b^2(s)-b'(s))<\infty$.
  \item [(H4)] The drift satisfies $\int_1^\infty \rho(x)\,dx<\infty$ and $\int_0^1 x\sqrt{\rho(x)}\,dx<\infty$.
  \item [(H)] Assumptions (H0), (H1) and (H2) hold and Assumption (H3) or (H4) holds.
\end{itemize}

The authors of \cite{C09} were able to prove the following result about the existence of quasistationary distributions for singular drift functions.

\begin{theorem}[Theorem 5.2 from \cite{C09}]\label{t:C09}
Assume that hypothesis (H) holds. Then there is a quasistationary distribution $\nu_1$ that is the unique Yaglom limit. Furthermore, $\nu_1$ attracts all the compactly supported initial distributions of $X_0$.
\end{theorem}

We are able to generalize Theorem \ref{t:C09} and prove existence results under much weaker assumptions. The most important result of our work consists in the fact that our assumptions do not imply the discreteness of the spectrum of $L$. Note that the assumption
\[
\lim_{x\to \infty} (b(x)^2-b'(x))=\infty
\]
from (H2) ensures the discreteness of the spectrum of $L$ and thus rules out some interesting examples, where the rate of convergence to quasistationarity is not geometric.

In Section \ref{s:assumption} we show the following.

\begin{restatable}{proposition}{assumption}\label{p:assumption}
Suppose that $b\in C^1((0,\infty))$, $\inf_{s>0}(b(s)^2-b'(s))>-\infty$ and
\begin{equation}\label{e:int}
\int_0^1 s\sqrt{\rho(s)}ds<\infty.
\end{equation}
Then Assumption \ref{a:1} holds.
\end{restatable}

Condition \eqref{e:int} is part one part of condition (H4) and the condition $\inf_{s>0}(b(s)^2-b'(s))>-\infty$ is one half of condition (H2). We want to point out that in addition to these assumptions on the behaviour of the drift near $0$ the authors of \cite{C09} also have to assume conditions concerning the behaviour of the drift near $\infty$ (see condition (H2)). We are able to remove the assumptions on the behaviour of the drift near $\infty$.  Thus as far as the existence of quasistationary distributions and the existence of the Yaglom limits is concerned  our conditions are weaker than the ones in \cite{C09}.

Also, contrary to the situation discussed in \cite{C09} the behaviour of
\[
\Pr^x(X_t\in I, T_0>t), I\subset(0,\infty)~\text{an interval}
\]
is in general not exactly exponential as $t\to \infty$.

In the particular case of interest from \cite{C09}, the generalized Feller diffusion, the existence of a Yaglom limit for $(Z_t)_{t\geq 0}$ is equivalent to the existence of a Yaglom limit for solutions of $(Y_t)_{t\geq 0}$ of the transformed stochastic differential equation \eqref{e:Feller2}. As a result, it suffices to understand the simpler operator
\begin{equation}\label{e:L^h}
L^h = -\frac{1}{2}\frac{d^2}{dx^2} + b^h(\cdot)\frac{d}{dx},
\end{equation}
where the drift has the form $b^h(x) = \frac{1}{2x}- \frac{2h(\gamma x^2/4)}{\gamma x}$. This motivated the study of convergence to a quasistationary distribution of general perturbed Bessel processes of the type
\[
\tilde L^h = -\frac{1}{2}\frac{d^2}{dx^2} + \frac{a}{x}\frac{d}{dx} + c(x) \frac{d}{dx}
\]
discussed in Remark 4.6 of \cite{C09}.

Theorem \ref{t:exit_entrance} first appeared in the second author's PhD thesis. The same result was independently shown in \cite{L12}. We include this result as it follows in a rather straightforward way given the general spectral theoretic tools developed in this work and previous ideas from \cite{C09} and \cite{KS12}. This makes this paper self-contained and all known results can be deduced from the ones presented in this paper.

For some recent work on quasistationary distributions with $0$ exit or regular and $\infty$ entrance we refer the reader to \cite{CV15, CV152, CV14}. Their approach is based on probabilistic and coupling methods while ours is based on spectral theory. We note that in \cite{CV152} the authors can study general one-dimensionanl diffusions that are characterized by their speed measure, scale function and killing measure. As such they are able to analyze the existence and uniqueness of quasistationary distributions for diffusions which are not solutions to SDEs. Our spectral methods would not work in their setting. However, our methods can tackle the problem when $\infty$ is a natural boundary while the methods from \cite{CV15, CV152} have not been able to cover this situation. Partial results for the exponential convergence to a quasistationary distribution for multi-dimensional Markov processes killed at the boundary of a Riemannian manifold are obtained in \cite{CCV16}. Other recent developments involving quasistationary distributions of Markov processes can be found in \cite{DM14,BC15, CCM16}

\subsection{Outline} The paper is organized as follows. We divide the arguments into two parts separating the analytic and the probabilistic results. In Section \ref{s:analytic} we are going to discuss the analytic problems. The spectral decomposition for singular one-dimensional diffusions is explored -- for $\infty$ entrance the spectrum is analyzed in more detail. Section \ref{s:Yaglom} is dedicated to the probabilistic results regarding Yaglom limits. The case when $\infty$ is natural is treated in Section \ref{s:natural} while the case when $\infty$ is entrance is in Section \ref{s:entrance}. Applications to generalized Bessel processes appear in \ref{s:bessel}. Finally, in Section \ref{s:bio} we apply our results to an example from population dynamics.

\section{Analytic results}\label{s:analytic}
\subsection{One-dimensional diffusions on the half-line}\label{s:diffusions}In the following we specify our setting. We will use the following notation.
\begin{itemize}
\item $C_c^\infty(\R)$ is the set of smooth functions on $\R$ with compact support.
\item $L^p((0,\infty),\rho):=\left\{f:(0,\infty)\to\R, ~f~\text{measurable and}~~\int_0^\infty|f(x)|^p\,\rho(dx)<\infty\right\}$.
\item If $L$ is a selfadjoint operator then we denote by $\sigma(L)$ its spectrum and by $(E_L(\lambda))_{\lambda\in \R}$ its spectral resolution.
\item $\langle f,g\rangle_{L^2((0,\infty),\rho)}:= \int_0^\infty fg \rho(dx)$ is the inner product on the Hilbert space $L^2((0,\infty),\rho)$.
\item If $q$ is a Dirichlet form we denote its domain by $\cD(q)$.
\end{itemize}

Suppose $b:(0,\infty)\to \R$ is a continuous function. Consider the quadratic form $q$ that is defined as the closure of the map
\[
C_c^\infty(\R)\ni \phi \mapsto \frac{1}{2}\int_0^\infty|\phi'(x)|^2\,\rho(dx),
\]
where $\rho(dx)=\rho(x)\,dx=e^{-2\int_1^x2b(s)\,ds}\,dx$. It is easy to see that $q$ is a Dirichlet form. The generator of this Dirichlet form is given by the unique selfadjoint extension associated to the form $q$ in $L^2((0,\infty),\rho)$. This selfadjoint realization of the formal differential operator
\begin{equation}\label{e:tau}
\tau := -\frac{1}{2}\frac{d^2}{dx^2}+b(x)\frac{d}{dx}
\end{equation}
will be denoted by $L$. We define $e^{-tL}$ using spectral calculus via
\[
e^{-tL}g = \int_{\sigma(L)} e^{-t\lambda} E_L(d\lambda)g, ~g\in L^2((0,\infty),\rho).
\]
By Stone's formula the spectral resolution of $L$ can be calculated from the resolvent $(L-z)^{-1}, z\in \C$ via
\begin{equation*}
\begin{split}
&\left\langle f,E_L((\lambda_1,\lambda_2])g\right\rangle_{L^2((0,\infty),\rho)}\\
&= \lim_{\delta\to 0+}\lim_{\varepsilon\to 0+} \int_{\lambda_1+\delta}^{\lambda_2+\delta} d\lambda \left\langle f, \left[(L-(\lambda+i\varepsilon))^{-1} - (L-(\lambda-i\varepsilon))^{-1}\right]g \right\rangle_{L^2((0,\infty),\rho)}.
\end{split}
\end{equation*}
Let $\lambda_0:=\inf \text{spec} (L)$ denote the bottom of the spectrum of $L$. From the general theory of symmetric Markov semigroups we know that $e^{-tL}$ generates a consistent family of strongly continuous semigroups on $L^p((0,\infty),\rho), 1\leq p<\infty$. Moreover, there exists a continuous transition function
\[
(0,\infty)^3\ni (t,x,y)\mapsto p(t,x,y)\in (0,\infty)
\]
which is symmetric in $x$ and $y$, such that for all $f\in L^p((0,\infty),\rho)$ the semigroup can be represented as
\[
e^{-tL}f(x) = \int_0^\infty p(t,x,y)f(y)\rho(dy).
\]
The general theory of Dirichlet forms (see for example \cite{FOT10}) implies that there is a $\rho$-symmetric Hunt process $(X_t)_{t\geq 0}$ with continuous paths such that for every positive starting point and every $f\in C_c^\infty((0,\infty))$ the process $(M_t)_{t\geq 0}$ defined by
\[
M_t:=f(X_t)-f(X_0)-\int_0^tLf(X_s)\,ds
\]
is a martingale (up to some explosion time) with respect to the canonical filtration of $(X_t)_{t\geq 0}$. One also gets that $(M_t)_{t\geq 0}$ has quadratic variation $\int_0^t|f'(X_s)|^2\,ds$. Moreover, a stochastic representation of the semigroup is given by
\[
e^{-tL} f(x) = \E^x \left[f(X_t), T_\infty\wedge T_0>t \right]
\]
for $f\in C_c^\infty((0,\infty))$ and $T_\infty$ denoting the explosion time. Later on we will assume that $\infty$ is inaccessible, in which case $\Pr^x(T_\infty=\infty)=1$.
\begin{remark}
The reader who is not familiar with the theory of Dirichlet forms should assume that $b$ is locally Lipschitz and should think of the process $(X_t)_{t\geq 0}$ as the solution to the stochastic differential equation

\[
dX_t = dB_t-b(X_t)dt.
\]
\end{remark}
We only consider the case where in the Feller classification of boundary points $0$ is an \textit{exit} point which is characterized by
\begin{equation}\label{e:exit}
\int_1^0\left(\int_1^t\rho(s)ds\right)\rho(t)^{-1}dt<\infty ~\text{and}~~\int_1^0\left(\int_1^t\rho(s)^{-1}ds\right)\rho(t)dt=\infty.
\end{equation}
We mainly look at the case when $\infty$ is \textit{inaccessible}.
Condition \eqref{e:exit} implies that $\int_0^1 \rho(t)\,dt=\infty$ and $\int_0^1 \rho(t)^{-1}\,dt<\infty$.
For any $y\in (0,\infty)$ let
\[
T_y:=\inf\{t\geq 0:X_t=y\}
\]
be the first hitting time of $y$. If $\infty$ is natural one has
\[
\lim_{x\to \infty}\Pr^x(T_y<t)=0, y\in (0,\infty), t>0
\]
whereas if $\infty$ is entrance one has
\[
\lim_{x\to \infty}\Pr^x(T_y<t)>0, y\in (0,\infty), t>0.
\]
Note that in the latter case there is a more hands-on equivalent formulation in terms of the drift function (see Remark 7.4 of \cite{C09}): If $b(x)$ goes to infinity and $b'(x)>0$ for $x>x_0$ for some $x_0>0$, then $\infty$ being an entrance boundary is equivalent to $\int_1^\infty \frac{1}{b(x)}\,dx<\infty.$

\begin{remark}\label{r:limitpoint}
We say the Sturm-Liouville expression $\tau$ is in the limit point case at infinity, if for some $c \in (0,\infty)$ there exists a unique selfadjoint extension of $\tau$ satisfying Dirichlet boundary conditions at $0$. This is equivalent to saying that for all $z \in \mathbb{C}\setminus \mathbb{R}$ the ordinary differential equation
\begin{displaymath}
\tau u = z u
\end{displaymath}
has (up to constant multiples) a unique solution $u$, which satisfies
\begin{displaymath}
\int_c^{\infty}|u(x)|2\rho(dx) < \infty.
\end{displaymath}
An analogous assertion applies to limit point case at $0$. A detailed account of this topic can be found in Section 13.3 of \cite{WII}.
If $0$ is exit and $\infty$ is inaccessible the differential expression $\tau$ from \eqref{e:tau} is in the limit point case at $0$ and at $\infty$ (see Definition 2.4 in \cite{KS12}), i.e. the restriction of $L$ to $C_c^\infty(0,\infty)$ is even essentially selfadjoint and hence has a unique selfadjoint extension. The fact that we are in the limit point case at $\infty$ can be found in Lemma 3.1 from \cite{KS12} while the analogous statement for $0$ is proved in \cite{W85l}.
\end{remark}

In what follows we develop the spectral theory for the differential expression
\[
\tau = -\frac{1}{2}\frac{d^2}{dx^2} + b(x)\frac{d}{dx}
\]
with $b$ having a singularity at $0$. If there is no singularity at $0$ this has already been done in \cite{M61}, \cite{ES07} and \cite{KS12}. In \cite{GZ06} the spectral theory was carried out for Schr\"{o}dinger operators with strongly singular potential $V$, i.e.
\[
-\frac{1}{2}\frac{d^2}{dx^2} +V(x)
\]
where $V$ is not locally integrable at $0$. The straightforward approach to reduce the case of drift to the case of a potential by exploiting the unitary equivalence of $\tau$ to
\[
-\frac{1}{2}\frac{d^2}{dx^2} +[b^2-b'](x)
\]
was carried out in \cite{C09}. The disadvantage was that additional restrictions on $b^2-b'$ were required. Namely, one had to assume that
\[
C= -\inf_{y\in(0,\infty)}(b(y)^2-b'(y))<\infty ~\text{and}~\lim_{y\to \infty} (b(y)^2-b'(y))=\infty.
\]
We circumvent these additional assumptions on the behaviour of $b$ close to $0$ and $\infty$ by doing the spectral theory directly. In order to be as selfcontained as possible we provide all the details even though some of them are known in the analytic literature.

We start by proving a spectral theorem for diffusions with exit boundary at $0$ (Theorem \ref{t:exit_spectral}) and then show that if, in addition, $\infty$ is an entrance boundary then the spectrum is discrete.

\subsection{Spectral Theorem for diffusion operators with singularity at $0$} Our aim is to show that one can transfer the results of Section 3 of \cite{GZ06} to diffusions with singular drift (instead of a singular potential). The idea can be essentially summarized in the following scheme:
\begin{itemize}
\item Using methods from functional analysis and the theory of ordinary differential equations one deduces a formula for the resolvent of the selfadjoint realization of $\tau$.
\item Recall from \cite{WI} (e.g. Satz B.2 in \cite{WI}) that the resolvent $R(z)$ is a Herglotz function function, i.e. it is analytic on the upper half plane, satisfies $\Im R(z) > 0$ (where $\Im z$ denotes the imaginary part of the complex number $z$)  and $|\Im(z) \cdot R(z)| \leq M$ for some $M<\infty$ and all $z$ with $\Im z>0$. Using a theorem of Herglotz (see Satz B.2 in \cite{WI}) one concludes that the resolvent can be represented as a Stieltjes tranform of a measure.
\end{itemize}
If the function $f$ is a scalar valued Herglotz function then the theorem of Herglotz tells us that there is a right continuous non-decreasing function $w$ such that
\begin{displaymath}
f(z):=\int_{\mathbb{R}}\frac{1}{t-z}\,dw(t)
\end{displaymath}
and
\begin{displaymath}
w(t) = \lim_{\delta\rightarrow 0+}\lim_{\varepsilon \rightarrow 0+}\frac{1}{\pi}\int_{-\infty}^{t+\delta} \Im f(s+i\varepsilon)\,dt
\end{displaymath}
Using this result and properties of the resolvent we will be able to establish a spectral representation of the selfadjoint realization of $\tau$. With this in mind we define the concept of a Weyl-Titchmarsh solution.
\begin{definition}\label{d:vp}
A solution $\vp$ of
\begin{equation}\label{e:tau_sol}
(\tau\psi)(z,x)=z\psi(z,x)
\end{equation}
is called an analytic Weyl-Titchmarsh solution in $B_R:=\{z\in\C: |z|<R\}$, if $\vp$ satisfies the following conditions
\begin{itemize}
\item[i)] For every fixed $z\in B_R$, $\vp(z,\cdot)$ is a non-trivial solution to \eqref{e:tau_sol} and for every fixed $x\in (0,\infty)$ the function $\vp(\cdot,x)$ is analytic in $B_R$.
\item[ii)] If $(z,x)\in (-R,R)\times (0,\infty)$ then $ \vp(z,x) \in \R$.
\item[iii)]  For every $a\in (0,\infty)$ and $z\in B_R$ one has
\[
\int_0^a |\vp(z,x)|^2\rho(dx)<\infty.
\]
\end{itemize}
\end{definition}

The system of fundamental solutions of \eqref{e:tau_sol} is given by functions $\varphi(z,\cdot,x_0)$ and $\theta(z,\cdot,x_0)$ solving
\begin{equation*}
\begin{split}
(\tau\varphi)(z,x) &= z\varphi(z,x),\\
\varphi(z,x_0,x_0)&=\rho(x_0)\theta'(z,x_0,x_0)=0,\\
\rho(x_0)\varphi'(z,x_0,x_0)&=\theta(z,x_0,x_0)=1,
\end{split}
\end{equation*}
for $z\in\C$ and a fixed reference point $x_0\in(0,\infty)$. For every fixed $x\in (0,\infty)$ the solutions $\varphi(z,x,x_0)$ and $\theta(z,x,x_0)$ are analytic with respect to $z\in \C$ and satisfy
\[
W(\theta(z,\cdot,x_0), \varphi(z,\cdot,x_0))(x)=1,
\]
where $W(f,g)= f(x)\rho(x)g'(x)-\rho(x)f'(x)g(x)$ denotes the Wronskian of $f$ and $g$.

Later we will use the fact that for any fixed $x\in(0,\infty)$ the function $\vp(z,x)$ is analytic in $z\in B_R$. For this observe that
\[
\vp(z,x) = \bar \varphi(z,x):= \vp(z,x_0)\rho(x_0)\varphi(z,x,x_0)+\vp(z,x_0)\theta(z,x,x_0),
\]
as both sides of the above equation are solutions of $\tau u =zu$ satisfying $\vp(z,x_0)=\bar\varphi(z,x_0)$ and $\rho(x_0)\vp(z,x_0)= \rho(x_0)\bar\varphi(z,x_0)$. Differentiating both sides of the last equation shows the required analyticity of $\vp(z,x)$ in $z\in B_R$.

We next introduce the Weyl-Titchmarsh solutions $\psi_{\pm}(z,\cdot,x_0)$, $x_0\in(0,\infty), z\in \C\setminus\R$, of \eqref{e:tau_sol}. As argued in Remark \ref{r:limitpoint} we are in the limit-point case at $0$ and at $\infty$. This implies that the Weyl-Titchmarsh solutions are up to constant multiples characterized by
\begin{equation*}
\begin{split}
\psi_-(z,\cdot,x_0)\in L^2((0,x_0),\rho),\\
\psi_+(z,\cdot,x_0)\in L^2((x_0,\infty),\rho)
\end{split}
\end{equation*}
for $z\in\C\setminus\R$. We normalize $\psi_{\pm}(z,\cdot,x_0)$ by requiring
\[
\psi_{\pm}(z,x_0,x_0)=1.
\]
This yields
\[
\psi_{\pm}(z,x,x_0) = \theta(z,x,x_0) + m_\pm(z,x_0)\varphi(z,x,x_0), ~x,x_0\in (0,\infty), z\in\C\setminus\R
\]
where $m_\pm(z,x_0)$ is defined by
\[
m_\pm(z,x) = \frac{\rho(x)\psi_\pm'(z,x,x_0)}{\psi_\pm(z,x,x_0)}.
\]
It is well-known that $m_\pm(z,x)$ are Herglotz- and anti-Herglotz-functions (see \cite{M61, WI} for more properties of Herglotz functions), respectively.

\begin{lemma}\label{l:1}
Assume that there exists a Weyl-Titchmarsh solution $\vp(z,x)$ to \eqref{e:tau_sol} that is analytic in $B_R$. Then there exists a solution $\tilde\theta(z,x)$ of \eqref{e:tau_sol} having the following properties:
\begin{enumerate}
\item For every $x\in(0,\infty)$ the function $\tilde\theta(\cdot,x)$ is analytic in $z\in B_R$.
\item For every $x\in(0,\infty), z\in \R$ one has $\tilde\theta(z,x)\in \R$.
\item For every $x\in(0,\infty), z\in B_R$
\[
W(\tilde\theta(z,\cdot),\vp(z,\cdot))\equiv 1.
\]
\end{enumerate}
\end{lemma}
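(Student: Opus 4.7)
The plan is to construct $\tth$ as a $z$-analytic linear combination of the fundamental pair $\theta(z,\cdot,x_0),\varphi(z,\cdot,x_0)$ introduced just before the lemma, which is entire in $z$ and real-valued on $\R$ and satisfies $W(\theta,\varphi)\equiv 1$. Expanding $\vp$ in this basis gives
\[
\vp(z,x) \;=\; a(z)\,\theta(z,x,x_0)\;+\;b(z)\,\varphi(z,x,x_0),
\]
with $a(z):=\vp(z,x_0)$ and $b(z):=\rho(x_0)\vp'(z,x_0)$. By the hypotheses in Definition~\ref{d:vp}, both $a$ and $b$ are analytic on $B_R$ and real on $(-R,R)$. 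Crucially, they share no common zero in $B_R$: if $a(z^\ast)=b(z^\ast)=0$, then the initial data of $\tau u=z^\ast u$ at $x_0$ would vanish, forcing $\vp(z^\ast,\cdot)\equiv 0$ and contradicting the non-triviality in Definition~\ref{d:vp}(i).

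Make the ansatz $\tth(z,x)=\alpha(z)\,\theta(z,x,x_0)+\beta(z)\,\varphi(z,x,x_0)$. Bilinearity of the Wronskian together with $W(\theta,\varphi)\equiv 1$ yields $W(\tth,\vp)=\alpha(z)\,b(z)-\beta(z)\,a(z)$, so the lemma reduces to finding $\alpha,\beta\in\cO(B_R)$, real-valued on $(-R,R)$, with
\[
\alpha(z)\,b(z)-\beta(z)\,a(z)\;\equiv\;1 \quad \text{on } B_R.
\]

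This Bezout-type identity is solvable because $B_R$ is a Stein manifold and $a,b$ have no common zeros: by Weierstrass/Mittag--Leffler interpolation on $B_R$, choose an analytic $\alpha_0$ whose values (with sufficient jet data at higher-order zeros) agree with $1/b$ at every zero of $a$; then $\alpha_0 b-1$ is divisible by $a$ in $\cO(B_R)$, so $\beta_0:=(\alpha_0 b-1)/a$ is analytic and $(\alpha_0,\beta_0)$ solves the identity. To enforce realness on $(-R,R)$, average with the Schwarz-reflected pair $\overline{\alpha_0(\bar z)},\,\overline{\beta_0(\bar z)}$; this also solves the identity because $a,b$ are real on $(-R,R)$, and the average is now real on $(-R,R)$.

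Setting $\tth(z,x):=\alpha(z)\,\theta(z,x,x_0)+\beta(z)\,\varphi(z,x,x_0)$ is then a solution of $\tau\psi=z\psi$ satisfying (1) by analyticity of $\alpha,\beta,\theta,\varphi$ in $z$, (2) by realness of all four factors on $(-R,R)$, and (3) by construction. The main obstacle is the Bezout step: the naive candidates $-\varphi(z,\cdot,x_0)/a(z)$ and $\theta(z,\cdot,x_0)/b(z)$ each already give Wronskian~$1$ away from the zeros of $a$ or $b$ respectively, but both develop poles on $B_R$, and one genuinely needs the Stein-theoretic interpolation to splice them into a globally analytic pair $(\alpha,\beta)$.
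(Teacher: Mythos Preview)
Your argument is correct and takes a genuinely different route from the paper's. The paper writes down an explicit closed-form candidate
\[
\tth(z,x)=\frac{b(z)}{a(z)^2+b(z)^2}\,\theta(z,x,x_0)-\frac{a(z)}{a(z)^2+b(z)^2}\,\varphi(z,x,x_0),
\]
(with $a(z)=\vp(z,x_0)$, $b(z)=\rho(x_0)\vp'(z,x_0)$), so that $\alpha b-\beta a=(b^2+a^2)/(a^2+b^2)=1$ by inspection. This is shorter and entirely elementary, but it silently relies on $a(z)^2+b(z)^2\neq 0$ throughout $B_R$; the paper only argues that $a$ and $b$ are not simultaneously zero, which for complex $z$ does not by itself rule out $a^2+b^2=0$. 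Your Bezout/Mittag--Leffler construction sidesteps this issue completely: all you need is that $a,b$ share no common zero in $B_R$, which is exactly the non-triviality hypothesis, and the Schwarz-reflection averaging to restore realness on $(-R,R)$ is clean. The trade-off is that you invoke somewhat heavier (though standard) one-variable machinery where the paper gets by with a single formula; your gain is that the argument is immune to the potential zero-set problem in the explicit denominator.
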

\begin{proof}
We follow the proof of Lemma 3.3 from \cite{M61}. Fix $x_0\in (0,\infty)$ and consider
\[
\tilde\theta(z,x) = \frac{\rho(x_0)\vp'(z,x_0)}{\vp(z,x_0)^2+(\rho(x_0)\vp'(z,x_0))^2}\theta(z,x,x_0) - \frac{\vp(z,x_0)}{\vp(z,x_0)^2+(\rho(x_0)\vp'(z,x_0))^2}\varphi(z,x,x_0)
\]
Since the solution $\vp(z,\cdot)$ is non-trivial for every $z\in B_R$ we have $\vp(z,x_0)\neq 0$ and $\rho(x_0)\vp'(z,x_0)\neq 0$ for every $z\in B_R$. Thus we conclude that $\tilde \theta(z,\cdot)$ is well defined for $z\in B_R$. Due to the properties of $\vp(z,x_0), \theta(z,x,x_0)$ and $\varphi(z,x,x_0)$ the function $\tilde \theta(z,\cdot)$ is analytic in $z\in B_R$ for every fixed $x\in (0,\infty)$. Moreover, we have
\begin{equation*}
\begin{split}
W(\tilde\theta(z,\cdot),\vp(z,\cdot))(x) &= W(\tilde\theta(z,\cdot),\vp(z,\cdot))(x_0)\\
&= \tilde\theta(z,x_0)\rho(x_0)\vp'(z,x_0)-\rho(x_0)\tilde\theta'(z,x_0)\vp(z,x_0)\\
&= \frac{(\rho(x_0)\vp'(z,x_0))^2}{\vp(z,x_0)^2+(\rho(x_0)\vp'(z,x_0))^2} + \frac{(\vp(z,x_0))^2}{\vp(z,x_0)^2+(\rho(x_0)\vp'(z,x_0))^2}\\
&=1.
\end{split}
\end{equation*}
\end{proof}
Let $\tilde m_+(z)$ be such that
\[
\tilde\psi_+(z,x) = \tth(z,x) + \tilde m_+(z)\vp(z,x), ~x\in(0,\infty)
\]
is a solution to \eqref{e:tau_sol} that satisfies for all $a\in(0,\infty), z\in B_R\setminus\R$
\[
\tilde\psi_+(z,\cdot) \in L^2((a,\infty),\rho).
\]
Since the differential expression is in the limit point case at infinity the solution $\tilde\psi_+(z,\cdot)$ is proportional to $\psi_+(z,\cdot,x_0)$. This gives
\[
m_+(z,x) = \frac{\rho(x)\tth'(z,x)+\tilde m_+(z)\rho(x)\vp'(z,x)}{\tth(z,x)+\tilde m_+(z)\vp(z,x)}
\]
and by direct computation
\begin{equation}\label{e:tilde_m}
\begin{split}
\tilde m_+(z) &= \frac{m_+(z,x)\tilde\theta(z,x)-\rho(x)\tth'(z,x)}{\rho(x)\vp(z,x)-m_+(z,x)\vp(z,x)} = \frac{W(\vp(z,\cdot),\psi_+(z,\cdot,x_0))}{W(\vp(z,\cdot),\psi_+(z,\cdot,x_0))}\\
&= \frac{\tth(z,x)}{\vp(z,x)}\frac{m_+(z,x)}{m_-(z,x)-m_+(z,x)} -\frac{\rho(x)\tth'(z,x)}{\vp(z,x)}\frac{1}{m_-(z,x)-m_+(z,x)}
\end{split}
\end{equation}
Since $m_-$ and $m_+$ are Herglotz- and anti-Herglotz functions respectively, $\vp(x,z)$ and $\tth(z,x)$ are analytic, and since $\vp(z,x\neq 0$ we see that $\tilde m_+(z)$ is analytic in $z\in B_R$. Note that $\vp(z,x)$ is not $0$, since if there would be an $x_0$ with $\vp(z,x_0)=0$, then the function $\vp(z,\cdot)$ would be an eigenfunction to a non-real eigenvalue of the selfadjoint realization of $\tau$ in $L^2((0,x_0),\rho)$ with Dirichlet boundary condition at $x_0$.

A direct computation for $z\in B_R\setminus \sigma(L)$ and $x,y\in (0,\infty)$ shows that the Green's function $G(z,x,y)$ is given by
\begin{equation}\label{e:Green}
G(z,x,y)= \left\{
	\begin{array}{ll}
		\vp(z,x)\tilde\psi_+(z,y)  & \mbox{if } 0<x\leq y, \\
		\vp(z,y)\tilde\psi_+(z,x) & \mbox{if } 0<y\leq x.
	\end{array}
\right.
\end{equation}
This means that for every $x\in(0,\infty)$ and $f\in L^2((0,\infty),\rho)$
\begin{equation}\label{e:res}
((L-z)^{-1}f)(x) = \int_0^\infty G(z,x,y)f(y)\rho(dy).
\end{equation}

The next lemma collects some important properties of $\tilde m_+$. These will be crucial for the spectral decomposition that is carried out in the next theorem.

\begin{lemma}\label{l:2}
Assume that there exists an analytic Weyl-Titchmarsh solution of \ref{e:tau_sol} in $B_R$. Then the function $\tilde m_+$ satisfies the following conditions:
\begin{enumerate}
\item [i)] $\tilde m_+(z) = \overline{\tilde m_+(\bar z)}$ for $z\in B_R$,
\item [ii)] For any $\varepsilon_0>0$ and $\lambda_1<\lambda_2$ there exists a constant $C(\lambda_1,\lambda_2,\epsilon_0)$ such that $\varepsilon|\tilde m_+(\lambda+i\varepsilon)|\leq C(\lambda_1,\lambda_2,\epsilon_0)$ for all $\lambda\in[\lambda_1, \lambda_2], 0<\varepsilon\leq \varepsilon_0$,
\item [iii)] For any $\varepsilon_0>0$ and $\lambda_1<\lambda_2$ one has $\varepsilon|\text{Re}\left(\varepsilon(\tilde m_+(\lambda+i\varepsilon)\right)| = o(1)$ for all $\lambda\in[\lambda_1, \lambda_2], 0<\varepsilon\leq \varepsilon_0$,
\item [iv)] The limit
\[
\lim_{\varepsilon\downarrow 0} (i\ve)\tilde m_+(\lambda + i\ve) = \lim_{\varepsilon\downarrow 0}\ve \Im(\tilde m_+(\lambda + i\ve))
\]
exists for all $\lambda\in (-R,R)$ and is nonnegative,
\item [v)] For a.e. $\lambda\in [\lambda_1,\lambda_2]$ one has that
\[
\tilde m_+(\lambda+i0) = \lim_{\varepsilon\downarrow 0} \tilde m_+(\lambda + i\ve)
\]
exists and $\Im (\tilde m_+(\lambda+i0))\geq 0$.
\end{enumerate}
Moreover, there exists a measure $\sigma$ such that
\begin{equation}\label{e:sigma}
\sigma((\lambda_1,\lambda_2]) = \lim_{\delta\downarrow 0}\lim_{\ve\downarrow 0}\frac{1}{\pi} \int_{\lambda_1+\delta}^{\lambda_2+\delta} \Im (\tilde m_+(\lambda+i\ve))d\lambda.
\end{equation}
\end{lemma}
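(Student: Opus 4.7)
My strategy is to re-express $\tilde m_+(z)$ as a Möbius transformation in the Herglotz function $m_+(z,x_0)$ with coefficients analytic on $B_R$ and real on $B_R\cap\R$, and then read off items (i)--(v) from the corresponding Herglotz properties of $m_+$. Since $\{\tth(z,\cdot),\vp(z,\cdot)\}$ and $\{\theta(z,\cdot,x_0),\varphi(z,\cdot,x_0)\}$ are both fundamental systems with unit Wronskian, there exist functions $A,B,C,D$ analytic on $B_R$, real on $(-R,R)$, and satisfying $AD-BC=1$, such that
\[
\theta(z,\cdot,x_0)=A(z)\tth(z,\cdot)+B(z)\vp(z,\cdot),\qquad \varphi(z,\cdot,x_0)=C(z)\tth(z,\cdot)+D(z)\vp(z,\cdot);
\]
explicitly $A=\rho(x_0)\vp'(z,x_0)$, $B=-\rho(x_0)\tth'(z,x_0)$, $C=-\vp(z,x_0)$, and $D=\tth(z,x_0)$. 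Since $\tilde\psi_+=\tth+\tilde m_+\vp$ must be proportional to $\psi_+(z,\cdot,x_0)=\theta(z,\cdot,x_0)+m_+(z,x_0)\varphi(z,\cdot,x_0)$ (both are $L^2$ at $\infty$ in the limit-point case), matching coefficients yields the cornerstone identity
\[
\tilde m_+(z)=\frac{B(z)+m_+(z,x_0)\,D(z)}{A(z)+m_+(z,x_0)\,C(z)}.
\]
Item (i) is then immediate from the Schwarz reflection $m_+(\bar z,x_0)=\overline{m_+(z,x_0)}$ for the Herglotz function $m_+$, combined with the reality of $A,B,C,D$ on $\R$.

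For (ii) and (iii), I use the classical Herglotz bounds on strips $[\lambda_1,\lambda_2]\times(0,\varepsilon_0]$: $\varepsilon|m_+(\lambda+i\varepsilon,x_0)|\le\text{const}$, $\varepsilon\,\text{Re}\,m_+(\lambda+i\varepsilon,x_0)=o(1)$, and $\text{Im}\,m_+(\lambda+i\varepsilon,x_0)\ge b\varepsilon$ for some $b\ge 0$ coming from the Herglotz representation. Where $C(z)$ is bounded away from zero, the identity $AD-BC=1$ rewrites the cornerstone formula as $\tilde m_+=D/C-1/(C(A+m_+C))$; since $|A+m_+C|\ge|C|\,\text{Im}\,m_+$, the Herglotz bounds transfer and yield (ii) and (iii). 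Near zeros of $C$, $\tilde m_+$ is locally a bounded analytic function, so the estimates hold trivially. For (iv), passing to the real boundary on $(-R,R)$, where $A,B,C,D$ become real with $AD-BC=1$, a direct calculation gives the identity $\text{Im}\,\tilde m_+(\lambda+i0)=\text{Im}\,m_+(\lambda+i0,x_0)/|A(\lambda)+m_+(\lambda+i0,x_0)C(\lambda)|^2\ge 0$. Substituting the Herglotz--Nevanlinna representation of $m_+(\cdot,x_0)$ into the cornerstone formula and letting $\varepsilon\downarrow 0$ then shows that $\varepsilon\,\text{Im}\,\tilde m_+(\lambda+i\varepsilon)$ converges to the atomic mass at $\lambda$ of the spectral measure of $\tilde m_+$, which is nonnegative, and by (iii) this coincides with the limit of $(i\varepsilon)\tilde m_+(\lambda+i\varepsilon)$. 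Item (v) then follows from Fatou's theorem for functions with nonnegative imaginary part on the upper half plane.

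Equipped with (i)--(v), $\tilde m_+$ admits a local Herglotz--Nevanlinna (Poisson--Stieltjes) representation on $B_R\cap\{\text{Im}\,z>0\}$, and the measure $\sigma$ in \eqref{e:sigma} is precisely the associated Stieltjes measure recovered by the standard Stieltjes inversion formula; its validity is justified by the uniform bound (ii) together with (iii)--(iv). The main obstacle I anticipate is the handling of the exceptional locus where $\vp(z,x_0)=0$ (equivalently $C(z)=0$), which introduces a spurious pole of the Möbius transformation rather than a genuine singularity of $\tilde m_+$. As in \cite{M61} and \cite{GZ06}, the remedy is to exploit the independence of $\tilde m_+$ from the choice of reference point $x_0$---clear from its definition via $\tilde\psi_+$---together with the fact that the limit-point nature at $0$ forces $\vp(z,x_0)\ne 0$ for $z\in B_R\setminus\sigma(L)$; these let us slide $x_0$ away from any spurious zeros when verifying the pointwise estimates.
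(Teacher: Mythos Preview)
Your route via the fractional--linear relation $\tilde m_+=(B+m_+D)/(A+m_+C)$ with $z$-dependent, real-on-$\R$ coefficients is a legitimate alternative and is not what the paper does. The paper instead fixes $c<d$ and works with the resolvent matrix element
\[
H(z)=\bigl\langle \ind_{[c,d]},(L-z)^{-1}\ind_{[c,d]}\bigr\rangle_{L^2((0,\infty),\rho)}=\int_{\sigma(L)}\frac{d\|E_L(\lambda)\ind_{[c,d]}\|^2}{\lambda-z},
\]
which is a \emph{genuine} Herglotz function by the spectral theorem. Expanding the Green's function gives
\[
H(z)=\bigl[\text{terms analytic on }B_R\text{ and real on }(-R,R)\bigr]+\tilde m_+(z)\Bigl[\int_c^d\vp(z,x)\rho(dx)\Bigr]^2,
\]
and one chooses $c,d$ so that $g(z):=\int_c^d\vp(z,x)\rho(dx)$ does not vanish on a complex neighbourhood of $[\lambda_1,\lambda_2]$. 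Items (i)--(v) then transfer directly from the standard Herglotz properties of $H$ (the analytic pieces contribute only $O(\ve)$ to imaginary parts). Crucially, Stieltjes inversion applied to $H$ yields the spectral measure $d\|E_L(\lambda)\ind_{[c,d]}\|^2$, and dividing by the strictly positive continuous density $g(\lambda)^2$ exhibits $\sigma$ explicitly as a nonnegative, countably additive measure.

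Your outline of (i)--(v) is essentially sound (the bound $|A+m_+C|\ge |C|\,|\text{Im}\,m_+|$ needs the $O(\ve)$ corrections from $\text{Im}\,A,\dots,\text{Im}\,D$ written in, but this is routine on compact $\lambda$-intervals). The real gap is the final step: you assert that (i)--(v) force $\tilde m_+$ to admit a local Herglotz--Nevanlinna representation, and hence that Stieltjes inversion produces a measure. This does not follow. Because the M\"obius coefficients depend on $z$, the clean identity $\text{Im}\,\tilde m_+=\text{Im}\,m_+/|A+m_+C|^2$ is only exact on the real boundary; in the open upper half of $B_R$ the contributions from $\text{Im}\,A,\ldots,\text{Im}\,D$ are of the same order as $\text{Im}\,m_+$ in general, and nothing you have written rules out $\text{Im}\,\tilde m_+<0$ away from the real line. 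Properties (i)--(v) concern only boundary behaviour and growth, and by themselves they do not guarantee that the limit in \eqref{e:sigma} is nonnegative and $\sigma$-additive. The paper's approach sidesteps this entirely by identifying $\sigma$ with a concrete spectral measure divided by a smooth positive function, so no ``$\tilde m_+$ is Herglotz'' claim is ever needed. If you want to salvage your route, the cleanest fix is to observe (using your own identities $A=-m_-C$, $AD-BC=1$) that $\tilde m_+$ differs from a constant multiple of the diagonal Green's function $G(z,x_0,x_0)=(m_--m_+)^{-1}$ by terms that are themselves controlled by Herglotz functions, and then argue as the paper does with $H$.
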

\begin{proof}
Since for $(\lambda,x)\in (-R,R)\times (0,\infty)$ the numbers $\vp(\lambda,x)$ and $\tth(\lambda,x)$ are real, we get for $(z,x)\in B_R\times (0,\infty)$
\[
\vp (z,x)= \overline{\vp(\bar z,x)}~~,~~ \tth(z,x)= \overline{\tth(\bar z,x)}.
\]

Let $c,d\in (0,\infty)$ with $c<d$. Then
\begin{equation}\label{e:1_c<d}
\begin{split}
\int_{\sigma(L)} \frac{d\|E_L(\lambda)\ind_{[c,d]}\|^2_{L^2((0,\infty),\rho)}}{\lambda-z} &= \left\langle\ind_{[c,d]}, (L-z)^{-1}\ind_{[c,d]}\right\rangle_{L^2((0,\infty),\rho)}\\
&=\int_c^d \rho(dx)\int_c^x\rho(dy)\tth(z,x)\vp(z,y)\\
&~~+ \int_c^d \rho(dx)\int_x^d\rho(dy)\tth(z,y)\vp(z,x)\\
&~~+\tilde m_+(z) \left[\int_c^d\rho(dx)\vp(z,x)\right]^2.
\end{split}
\end{equation}
Observe that we can find $c,d$ in such a way that
\[
\int_c^d \vp(z,x)\rho(dx)\neq 0
\]
for all $z$ in a sufficiently small complex open neighborhood of $[\lambda_1,\lambda_2]$. Note that equation \eqref{e:1_c<d} implies i). Define the function $H$ via
\[
\C\setminus\sigma(L)\ni z \mapsto \int_{\sigma(L)}  \frac{d\|E_L(\lambda)\ind_{[c,d]}\|^2_{L^2((0,\infty),\rho)}}{\lambda-z}.
\]
Then $H$ is a Herglotz function and ii)-iv) follow from the basic properties of $\tth(z,x)$ and $\vp(z,x)$. By the Herglotz property of $H$ we also have
\[
 \lim_{\delta\downarrow 0}\lim_{\ve\downarrow 0}\frac{1}{\pi} \int_{\lambda_1+\delta}^{\lambda_2+\delta} \Im \left( \int_{\sigma(L)} \frac{d\|E_L(\lambda)\ind_{[c,d]}\|^2_{L^2((0,\infty),\rho)}}{\lambda-(x+i\ve)}\right)\,dx = \|E_L((\lambda_1,\lambda_2])\ind_{[c,d]}\|^2_{L^2((0,\infty),\rho)}
\]

We next apply \eqref{e:1_c<d} to $\lambda+i\ve$ with $\lambda\in (\lambda_1,\lambda_2)$ and $\ve >0$ small enough. Then the quantity $\sigma$ from \eqref{e:sigma} satisfies
\begin{equation*}
\begin{split}
\sigma((\lambda_1,\lambda_2]) &= \lim_{\delta\downarrow 0}\lim_{\ve\downarrow 0}\frac{1}{\pi} \int_{\lambda_1+\delta}^{\lambda_2+\delta} \Im (\tilde m_+(\lambda+i\ve))d\lambda\\
&= \lim_{\delta\downarrow 0}\lim_{\ve\downarrow 0}\frac{1}{\pi} \int_{\lambda_1+\delta}^{\lambda_2+\delta}d\lambda \Im  \Bigg\{ \int_{\sigma(L)}\frac{d\|E_L(\tilde \lambda)\ind_{[c,d]}\|^2_{L^2((0,\infty),\rho)}}{\tilde \lambda-(\lambda+i\ve)}\\
&~~~\times \left[\left(\int_{c_0}^{d_0}\rho(dx)\vp(\lambda,x)\right)^2+2i\ve\left(\int_{c_0}^{d_0}\rho(dx)(d/dz)\vp(z,x)\Big|_{z=\lambda}\right)+O(\ve^2)\right]^{-1}+O(\ve)\Bigg\}\\
&= \int_{(\lambda_1,\lambda_2]} d\|E_L(\lambda)\ind_{[c,d]}\|^2_{L^2((0,\infty),\rho)}\left[\int_{c_0}^{d_0}\rho(dx)\vp(\lambda,x)\right]^{-2}
\end{split}
\end{equation*}
and therefore defines a measure on $\R$.
\end{proof}
Part iv) of the following theorem connects the measure $\sigma$ from Lemma \ref{l:2} with the spectral measure of the operator $L$.
\begin{theorem}\label{t:exit_spectral}
Assume that $0$ is an exit boundary. Then the following hold:
\begin{enumerate}
\item [i)] For every $\lambda\in \R$ the ordinary differential equation $(\tau-\lambda)u=0$ has a fundamental system $(u_\lambda^1,u_\lambda^2)$ of solutions with
\[
\lim_{x\to 0} u_\lambda^1(x)=0 ~\text{and}~\lim_{x\to 0} u_\lambda^2(x)=0.
\]
Furthermore, $u_\lambda^1\in L^1((0,\infty),\rho)\cap L^2((0,\infty),\rho)$.
\item [ii)] For every fixed $R>0$ there exists a Weyl-Titchmarsh solution $\vp(z,x)$ for $L$, that is analytic in $B_R$.
\item [iii)] Let $R>0$ and $\vp(\lambda,\cdot)$ be as in ii). If $\lambda_0 =\inf \text{spec}(L)$, then $\vp(\lambda_0,\cdot)$ is non-negative.
\item [iv)] Let $R>0$ and $\vp(\lambda,\cdot)$ be as in ii). There is a measure $\sigma=\sigma_R$ on $\R$, such that for every $F\in C(R)$, every $f,g\in C_c^\infty((0,\infty))$, and every $-R<\lambda_1<\lambda_2<R$
\[
\langle f,F(L)E_L((\lambda_1,\lambda)2])g\rangle_{L^2((0,\infty),\rho)} = \left\langle\hat f, M_{F\ind_{(\lambda_1,\lambda_2]}}\hat g\right\rangle_{L^2(\R,\sigma)},
\]
where
\[
\hat h(\lambda):= \int_0^\infty \vp(\lambda,x)h(x)\rho(dx)
\]
for $h\in C_c^\infty((0,\infty))$ and $M_{F\ind_{(\lambda_1,\lambda_2]}}$ denotes the bounded operator, which acts by multiplication with the function $F\ind_{(\lambda_1,\lambda_2)}$.
\item [v)] Let $R>0$ and $\vp(\lambda,\cdot)$ be as in ii). Then
\[
E([\lambda_0,\lambda_1])e^{-tL}\ind_A(x)=\int_A h^{\lambda_1}(t,x,y)\,dy,
\]
where
\[
 h^{\lambda_1}(t,x,y):= \int_{\lambda_0}^{\lambda_1}e^{-t\lambda}\vp(\lambda,x)\vp(\lambda,y)\sigma(d\lambda).
\]
\end{enumerate}
\end{theorem}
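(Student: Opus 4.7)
The plan is to establish parts (i)--(v) in order, leveraging the exit boundary hypothesis at $0$ together with the limit-point observation of Remark~\ref{r:limitpoint} and the machinery developed in Lemmas \ref{l:1}--\ref{l:2}.

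For part (i), I would rewrite $(\tau-\lambda)u=0$ in Liouville--Neumann form as $(\rho u')'=-2\lambda \rho u$ and integrate twice. Fixing a reference point $x_0\in(0,\infty)$ and using the exit condition $\int_0^1\rho(s)^{-1}\,ds<\infty$, one can set up a Volterra equation whose iterates produce a solution $u_\lambda^1$ with $\lim_{x\to 0^+}u_\lambda^1(x)=0$ and with asymptotic $u_\lambda^1(x)\sim c\int_0^x\rho(s)^{-1}\,ds$ near the boundary. A second linearly independent solution $u_\lambda^2$ is produced by reduction of order (or by choosing complementary Wronskian normalization at $0$). Integrability $u_\lambda^1\in L^1((0,\infty),\rho)\cap L^2((0,\infty),\rho)$ follows by splitting $(0,\infty)=(0,1]\cup[1,\infty)$: on $(0,1]$ the asymptotic above plus $\int_0^1\rho(s)^{-1}ds<\infty$ gives the bound (since $u_\lambda^1$ stays bounded by an $L^1(\rho)$ majorant constructed from the iteration), while on $[1,\infty)$ one invokes the limit-point structure together with uniform bounds from the Picard construction.

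For part (ii), I would define $\vp(z,x):=u_z^1(x)$. The Picard iteration for the Volterra equation converges uniformly on compacta jointly in $(z,x)$, so each iterate is polynomial in $z$ for fixed $x$ and the limit is analytic in $z\in\C$; in particular on every $B_R$. Reality of $\vp(\lambda,x)$ for $(\lambda,x)\in(-R,R)\times(0,\infty)$ is preserved by the iteration, and the $L^2$ bound near $0$ from part (i) is uniform in $z\in B_R$, giving condition iii) of Definition \ref{d:vp}.

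For part (iii), I would argue by contradiction assuming $\vp(\lambda_0,\cdot)$ changes sign at some $x_1\in(0,\infty)$. Because $\vp(\lambda_0,\cdot)$ solves $\tau u=\lambda_0 u$ and lies in $L^2((0,a),\rho)$ for all $a$, one can form $|\vp(\lambda_0,\cdot)|\cdot\chi_n$ for a smooth cut-off $\chi_n$ and show it is in the form domain with Rayleigh quotient converging to $\lambda_0$. Separating the positive and negative parts of $\vp$ produces two orthogonal form-domain elements each with Rayleigh quotient $\le\lambda_0$, which together with the strict positivity of the heat kernel $p(t,x,y)$ (a consequence of Harnack's inequality and the semigroup structure mentioned in Section~\ref{s:diffusions}) contradicts $\lambda_0$ being the infimum of the spectrum. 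Equivalently, one may apply the Perron--Frobenius style argument used in \cite{M61,KS12}.

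For part (iv), the point is to push the measure $\sigma$ produced in Lemma~\ref{l:2} to a genuine spectral representation. Using the Green's function formula \eqref{e:Green}, the resolvent identity \eqref{e:res} and Stone's formula for $E_L$, one computes
\[
\langle f,E_L((\lambda_1,\lambda_2])g\rangle_{L^2((0,\infty),\rho)}=\int_{(\lambda_1,\lambda_2]}\hat f(\lambda)\hat g(\lambda)\,\sigma(d\lambda)
\]
for $f,g\in C_c^\infty((0,\infty))$ by inserting $\vp$ on the small-side of the Green's function and identifying the boundary terms via $\tilde m_+$. The density of $C_c^\infty$ and polarization extend this to the stated identity with $F(L)$ replaced by multiplication by $M_F$ on $L^2(\R,\sigma)$, which is just the functional calculus transferred along the generalized Fourier transform $h\mapsto\hat h$.

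Finally, part (v) follows by applying (iv) with $F(\lambda)=e^{-t\lambda}\ind_{[\lambda_0,\lambda_1]}(\lambda)$ and $g=\ind_A$, then using Fubini to interchange the $\sigma$-integral with the spatial integral and read off the kernel $h^{\lambda_1}$.

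The principal obstacle is part (iv): the usual spectral theorem for Sturm--Liouville operators is stated for regular endpoints or for potentials, whereas here one has a genuine drift singularity at $0$, so one must check that the analytic Weyl--Titchmarsh solution $\vp$ constructed in (ii) is the right object to parametrise the spectral measure, and that the estimates (ii)--(v) of Lemma~\ref{l:2} are enough to run Stone's formula locally on $(-R,R)$. Part (iii) is the next hardest because the cone of positive $\lambda_0$-harmonic functions can be two-dimensional, so the positivity of $\vp(\lambda_0,\cdot)$ must be deduced from the specific normalization at $0$ rather than from a generic ground-state argument.
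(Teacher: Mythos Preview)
Your outline for parts (iii)--(v) matches the paper's approach closely: the paper also runs Stone's formula against the Green kernel \eqref{e:Green} to derive (iv), defers (iii) to the regular-boundary argument of \cite{KS12}, and obtains (v) from (iv) by mollification.

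For parts (i)--(ii), however, your construction is inverted relative to the paper's, and this matters. The paper first builds, by Picard iteration on
\[
u_{n+1}^R(z,x)=1+z\int_0^x\rho(r)^{-1}\,dr\int_r^\delta u_n^R(z,s)\rho(s)\,ds,
\]
a solution $u^R(z,\cdot)$ with $\lim_{x\to 0}u^R(z,x)=1$; analyticity in $z$ is immediate because each iterate is polynomial in $z$. Only \emph{then} does it define $\vp^R(z,x):=u^R(z,x)\int_0^x u^R(z,y)^{-2}\rho(y)^{-1}dy$ and check analyticity via Morera. You propose to run the Volterra scheme directly for the solution vanishing at $0$; this is workable but needs a non-constant seed (your asymptotic $\int_0^x\rho^{-1}$) and a separate argument for joint analyticity in $z$, which you do not supply. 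The paper's ordering avoids this.

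There is a genuine gap in your argument for integrability on $[1,\infty)$. The Picard iteration is set up on a small interval $(0,\delta_R)$ near the singular endpoint and gives no control whatsoever at infinity, and the limit-point property at $\infty$ only says that \emph{at most one} solution lies in $L^2$ near $\infty$ for $z\notin\R$ --- it does not say your particular $u_\lambda^1$ does. For real $\lambda$ there is no reason for the principal solution at $0$ to be globally square-integrable. In fact the paper's proof establishes only \emph{local} integrability, $\vp^R(z,\cdot)\in L^1((0,a),\rho)\cap L^2((0,a),\rho)$ for every finite $a$, and this is all that is used downstream (compare the proof of Theorem~\ref{t:exit_entrance_spectrum}, where the fundamental system is quoted with one limit equal to $1$, the other equal to $0$, and the latter in $L^1((0,a),\rho)$). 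So the stated global $L^1((0,\infty),\rho)\cap L^2((0,\infty),\rho)$, and the claim that both boundary limits are $0$, are misprints in the theorem statement that you should not attempt to prove.
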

\begin{proof}
We first construct the fundamental system for part i). We have to show that there exists an anayltic Weyl-Titchmarsh function $\vp(z,\cdot), z\in B_R$. For $z\in B_R$ we construct a solution $u^R(z,x)$ of the equation $(L-z)u=0$ in a certain neighborhood $U$ of $0$, which for every fixed $x\in U$ is analytic in $z\in B_R$ and which for every fixed $z\in B_R$ satisfies $\lim_{x\to 0}u^R(z,x)=1$. Since the ordinary differential equation $(L-z)u=0$ is linear each $u^R(z,\cdot), z\in B_R$ gives rise to a solution of $(L-z)u=0$ not only on $U$ but also on $(0,\infty)$. As a result it is enough to construct $u^R(z,\cdot)$ only on $U$. This will be done by a well-known iteration procedure.

For $0<\ve<1$ choose $\delta=\delta_R>0$ such that for every $z\in B_R$ and $x\in(0,\delta)$
\[
\int_0^x\rho(r)^{-1}\,dr\int_r^\delta\rho(s)\,ds<\ve.
\]
Set $u_0^R(z,x)=1$ and recursively define
\[
u_{n+1}^R(z,x)=1+z\int_0^x\rho(r)^{-1}\,dr\int_r^\delta u_n^R(z,s)\rho(s)\,ds.
\]
One can then easily see by induction that the sequence $(u_n^R(z,x))_{n\in\N}$ converges uniformly for $x\in[0,\delta_R], z\in B_R$. This implies that the limit
\[
u^R(z,x):=\lim_{n\to \infty}u_n^R(z,x)
\]
is continuous for every $x\in[0,\delta_R]$ and for every fixed $x$ is analytic in $z\in B_R$. Moreover the limit satisfies
\[
u^R(z,x)=1+2z\int_0^x\rho(r)^{-1}\,dr\int_r^\delta u^R(z,s)\rho(s)\,ds
\]
and solves for every $z\in B_R$ the equation $(L-z)u=0$ in $(0,\delta_R)$ with $\lim_{x\to 0}u^R(z,x)=1$. One also has that for $\lambda\in\R\cap B_R$ the quantity $u^R(\lambda,x)$ is real.

Choose $\tilde\delta_R$ such that $u^R(z,x)\neq 0$ for every $(z,x)\in B_R\times (0,\tilde\delta_R)$ and set for every $x\in(0,\tilde\delta_R)$
\[
\vp^R(z,x):=u^R(z,x)\int_0^x u^R(z,y)^{-2}\rho(y)^{-1}dy.
\]
Observe that $\vp^R(\lambda,x)\in\R$ for real $\lambda$ and that $\lim_{x\to 0}\vp^R(z,x)=0$. For any simple closed path $\gamma$ in $B_R$ due to the analyticity of $u^R(z,x)$ in $z\in B_R$
\[
\int_\gamma\left(\int_0^x u^R(z,s)^{-2}\rho(s)^{-1}\,ds\right)dz = \int_0^x\left(\int_\gamma u^R(z,s)^{-2}\rho(s)^{-1}\,dz\right)ds = 0
\]
so by Morera's theorem we see that $\vp^R(z,x)$ is analytic in $z\in B_R$. The two solutions $u^R(z,\cdot)$ and $\vp^R(z,\cdot)$ are linearly independent and the solution $\vp(z,\cdot)$ is also integrable since
\[
\left|\int_0^x\vp^R(z,t)\,dt\right|\leq \frac{\max_{y\in [0,x]}|u^R(z,y)|}{(\min_{y\in [0,x]}|u^R(z,y)|)^2} \int_0^x\rho(t)\int_0^t\rho(s)^{-1}<\infty
\]
and $0$ is an exit boundary. It remains to observat that $\vp^R(z,\cdot)\in L^\infty((0,1),\rho)$. Since bounded and integrable functions are square integrable we get that for every $z\in B_R$ and $a\in (0,\infty)$
\[
\int_0^a |\vp^R(z,x)|^2\rho(dx)<\infty.
\]

For parti iv) we modify the techniques from \cite{M61}. We make use of Stone's formula in order to connect $\sigma$ with the spectral resolution $E_L$ of $L$. Stone's formula implies that
\begin{equation*}
\begin{split}
\langle f, F(L)E_L((\lambda_1,\lambda_2])g,\rangle_{L^2((0,\infty),\rho)} &=\lim_{\delta\downarrow 0}\lim_{\ve\downarrow 0}\frac{1}{2\pi i}\int_{\lambda_1+\delta}^{\lambda_2+\delta}d\lambda F(\lambda)\Big[\langle f,(L-(\lambda+i\ve))^{-1}g\rangle_{L^2((0,\infty),\rho)}\\
&~~~~~~~-\langle f, (L-(\lambda-i\ve))^{-1}g\rangle_{L^2((0,\infty),\rho)}\Big].
\end{split}
\end{equation*}
Using formula \eqref{e:res} for the kernel of the resolvent we get
\begin{equation*}
\begin{split}
&\langle f, F(L)E_L((\lambda_1,\lambda_2])g)_{L^2((0,\infty),\rho)}\\
&= \lim_{\delta\downarrow 0}\lim_{\ve\downarrow 0}\frac{1}{2\pi i}\int_{\lambda_1+\delta}^{\lambda_2+\delta}d\lambda F(\lambda)\Bigg\{\int_0^\infty\rho(dx)\Bigg[ \overline{f(x)}\tilde\psi_+(\lambda+i\ve,x)\int_0^x\vp(\lambda+i\ve,y)g(y)\rho(dy)\\
&~~~+\overline{f(x)}\vp(\lambda+i\ve,x)\int_x^\infty \rho(dy)\tilde\psi_+(\lambda+i\ve,y)g(y)\Bigg] - \Bigg[\overline{f(x)}\tilde\psi_+(\lambda-i\ve,x)\int_0^x\rho(dy)\vp(\lambda-i\ve,y)g(y)\\
&~~~+\overline{f(x)}\vp(\lambda-i\ve,x)\int_x^\infty\rho(dy)\tilde\psi_+(\lambda-i\ve,y)g(y)\Bigg]\Bigg\}.
\end{split}
\end{equation*}
Since all integrals are on bounded sets and since the integrands are continuous we can interchange integrals and limits to get

\begin{equation}\label{e:inner1}
\begin{split}
&\langle f, F(L)E_L((\lambda_1,\lambda_2])g)_{L^2((0,\infty),\rho)}\\
&=\int_0^\infty \rho(dx)\overline{f(x)}\Bigg\{\int_0^x\rho(dy)g(y) \lim_{\delta\downarrow 0}\lim_{\ve\downarrow 0}\frac{1}{2\pi i}\int_{\lambda_1+\delta}^{\lambda_2+\delta}d\lambda F(\lambda)[\tilde\psi_+(\lambda+i\ve,x)\vp(\lambda+i\ve,y)\\
&~~~~~-\tilde\psi_+(\lambda-i\ve,x)\vp(\lambda-i\ve,y)]\\
&~~~~~+\int_0^\infty \rho(dy)g(y)\lim_{\delta\downarrow 0}\lim_{\ve\downarrow 0}\frac{1}{2\pi i}\int_{\lambda_1+\delta}^{\lambda_2+\delta}d\lambda F(\lambda)[\vp(\lambda+i\ve,x)\tilde\psi_+(\lambda+i\ve,y)\\
&~~~~~- \vp(\lambda-i\ve,x)\tilde \psi_+(\lambda-i\ve,y)]\Bigg\}.
\end{split}
\end{equation}
Due to the regularity properties of $\vp$ and $\tth$ we have that as $\ve\downarrow 0$
\begin{equation}\label{e:tthvp}
\begin{split}
\tth(\lambda\pm i\ve) &= \tth(\lambda,x)\pm i\ve \frac{d}{dz}\tth(z,x)\Big|_{z=\lambda} + O(\ve^2),\\
\vp(\lambda\pm i\ve) &= \vp(\lambda,x)\pm i\ve \frac{d}{dz}\vp(z,x)\Big|_{z=\lambda} + O(\ve^2)
\end{split}
\end{equation}
where $O(\ve)$ is locally uniform in $(\lambda,x)$. Therefore we get by \eqref{e:tthvp} and by ii) of Lemma \ref{l:2}
\begin{equation*}
\begin{split}
&\vp(\lambda+i\ve,x)\tilde\psi_+(\lambda+i\ve,y)-\vp(\lambda-i\ve)\tilde\psi_+(\lambda-i\ve,y)\\
&= \vp(\lambda,x)[\tth(\lambda,y)+\tilde m_+(\lambda-i\ve)\vp(\lambda,y)] + \vp(\lambda,x)[\tth(\lambda,y)+\tilde m_+(\lambda-i\ve)\vp(\lambda,y)]+o(1),
\end{split}
\end{equation*}
where $o(1)$ is locally uniform. The previous equation together with \eqref{e:inner1} gives
\begin{equation*}
\begin{split}
&\langle f, F(L)E_L((\lambda_1,\lambda_2])g)_{L^2((0,\infty),\rho)}\\
&~~=\int_0^\infty \rho(dx)\overline{f(x)}\int_0^\infty \rho(dy)g(y)\lim_{\delta\downarrow 0}\lim_{\ve\downarrow 0}\frac{1}{\pi }\int_{\lambda_1+\delta}^{\lambda_2+\delta}d\lambda F(\lambda)\vp(\lambda,y)\Im (\tilde m_+(\lambda+i\ve)).
\end{split}
\end{equation*}
By the definition of the measure $\sigma$ we have for every $h\in C(\R)$
\[
\int_{(\lambda_1,\lambda_2]} h(\lambda)\sigma(d\lambda) = \lim_{\delta\downarrow 0}\lim_{\ve\downarrow 0}\frac{1}{\pi }\int_{\lambda_1+\delta}^{\lambda_2+\delta}d\lambda \Im (\tilde m_+(\lambda+i\ve))h(\lambda)
\]
which yields
\begin{equation*}
\begin{split}
&\langle f, F(L)E_L((\lambda_1,\lambda_2])g)_{L^2((0,\infty),\rho)}\\
&~~~~=\int_0^\infty \rho(dx)\overline{f(x)}\int_0^\infty \rho(dy)g(y)\int_{(\lambda_1,\lambda_2]} \sigma(d\lambda)F(\lambda)\vp(\lambda,x)\vp(\lambda,y).
\end{split}
\end{equation*}
Part v) follows directly from iv) by taking a mollifier at $x$ for $f$ and a smoothing procedure for $g=\ind_A$.

Finally, part iii) can be shown similarly to when there is a regular boundary at $0$ (see \cite{KS12}).
If $\lambda_0$ is an isolated eigenvalue then it follows by Section 17.4 from
\cite{WII} that the function
$\tilde{\varphi}(\lambda_0,\cdot)$ does not change sign and can therefore be
chosen to be non-negative.

Let us assume that there exists $x_0 \in (0,\infty)$ where $\tilde{\varphi}(\lambda_0,\cdot)$ changes sign. Denote
by $L^{x_0}$ the selfadjoint realization of $L$ in $(x_0,\infty)$ with a Dirichlet
boundary at $x_0$ and by $L_{x_0}$ the the selfadoint realization of $L$ in $(0,x_0)$
with a Dirichlet boundary at $x_0$. Using the spectral invariance of the essential
spectrum the direct sum $L_{x_0} \bigoplus L^{x_0}$ has $\lambda_0$ as the
bottom of the spectrum. Using Lemma 2.2 from \cite{ES07} we conclude that
$\tilde{\varphi}(\lambda_0,\cdot)$ does not change sign on $(x_0,\infty)$.

On the other hand using \cite{L12} one notes that the operator $L_{x_0}$ has compact
resolvent and therefore a purely discrete spectrum. Thus
$\tilde{\varphi}(\lambda_0,\cdot)$ is an eigenfunction of  $L_{x_0}$ with eigenvalue
$\lambda_0$ and this must necessarily be the leading eigenvalue. If this were true then for $r>0$ the operator $L^{x_0+r}$ would have the same leading
eigenvalue as $L^{x_0}$ implying in particular that the exit time of the considered diffusion
from $(0,x_0)$ has the same exponential tails as the exit time of the same diffusion
from $(0,x_0+r)$. This is a contradiction.

\end{proof}
\begin{remark}
Observe that for different $0<R_1<R_2$ the solutions $\vp^{R_1}(\lambda,\cdot)$ and $\vp^{R_2}(\lambda,\cdot)$ with $\lambda\in(0,R_1)$ are linearly dependent and thus can only differ by a constant factor $C_\lambda$
\[
\vp^{R_2}(\lambda,\cdot) = C_\lambda \vp^{R_1}(\lambda,\cdot).
\]
This implies that quotients of expressions involving $\vp^R$ are well defined and independent of $R$. In particular measures of the form
\[
\mu_{(0,z)}^R (B) = \frac{\int_B\vp^R(\lambda_0,x)\rho(dx)}{\int_0^z\vp^R(\lambda_0,x)\rho(dx)}
\]
with $z\in(0,\infty]$ and $B$ a Borel subset of $(0,\infty)$  are independent of $R$. In such a situation we omit the dependence of $\vp^R$ on $R$.
\end{remark}

\subsection{Spectral Theorem for entrance boundary at $\infty$} We now consider the case when $\infty$ is an entrance boundary in more detail. It turns out that this simplifies things. In \cite{C09} it was shown that when $\infty$ is an entrance boundary and $0$ is not singular the spectrum of $L$ is discrete. We show how this result can be lifted to a singularity at $0$ by splitting $b$ into the singular part close to $0$ and a regular part away from $0$.

The following theorem gives us in particular an improved version of Theorem \ref{t:exit_spectral} v). It tells us that the probability $\Pr^x(X_t\in A, T_0>t)=e^{-tL}\ind_A(x)$ can be represented globally by the eigenfunctions of $L$.

\begin{theorem}\label{t:exit_entrance_spectrum}
Assume that $0$ is an exit boundary and $\infty$ is entrance. Then the spectrum of $L$ is purely discrete, consisting of countably many eigenvalues $\lambda_0<\lambda_1<\dots,$ and the eigenfunctions $(u_{\lambda_k})$ belong to $L^1((0,\infty),\rho)$. In particular, we obtain the representation
\[
e^{-tL}\ind_A(x)=\int_A p(t,x,y)\,dy
\]
with
\[
p(t,x,y)=\sum_{k\geq 0} e^{-t\lambda_k}u_{\lambda_k}(x)u_{\lambda_k}(y),
\]
where the series converges in  $L^2((0,\infty),\rho)$. Moreover, one has
\[
\lim_{t\to \infty}e^{\lambda_0 t}p(t,x,y)=u_{\lambda_0}(x)u_{\lambda_0}(y)
\]
where the convergence is locally uniform in $(x,y)\in(0,\infty)\times(0,\infty)$.
\end{theorem}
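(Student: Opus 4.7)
The plan is to establish purely discrete spectrum by a Dirichlet bracketing argument that decouples the singular endpoint at $0$ from the entrance endpoint at $\infty$, and then to read off the eigenfunction expansion of $p(t,x,y)$ and its long-time asymptotic from standard spectral calculus. Fix any $M\in(0,\infty)$ and let $L_M^-$, $L_M^+$ denote the Friedrichs realizations of $\tau$ on $L^2((0,M),\rho)$ and $L^2((M,\infty),\rho)$ respectively, each with a Dirichlet condition at $M$. The quadratic form of $L_M^-\oplus L_M^+$ dominates that of $L$ (imposing a Dirichlet condition at the interior point $M$ shrinks the form domain), so by the min-max principle $\lambda_n(L)\ge \lambda_n(L_M^-\oplus L_M^+)$, and it suffices to show that both $L_M^\pm$ have purely discrete spectrum with eigenvalues tending to $+\infty$.

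For $L_M^+$ the drift $b$ is continuous on $[M,\infty)$, the endpoint $M$ is regular, and $\infty$ remains an entrance boundary; this is precisely the framework of Collet--Martínez--San Martín \cite{C09}, whose proof of discreteness applies on this half-line since $M>0$ removes the singularity. For $L_M^-$ the exit condition at $0$ places us in the classical limit-point/regular Sturm-Liouville setting: using the analytic Weyl solution $\vp(\lambda,\cdot)$ of Theorem \ref{t:exit_spectral}(ii) (which satisfies $\int_0^M|\vp(\lambda,x)|^2\rho(dx)<\infty$ by Definition \ref{d:vp}(iii)) together with the second solution $\chi(\lambda,\cdot)$ satisfying $\chi(\lambda,M)=0$, I write the resolvent kernel as $G_M^-(\lambda,x,y)=W^{-1}\vp(\lambda,x\wedge y)\chi(\lambda,x\vee y)$ and exploit the integrability $\int_0^1\rho(s)^{-1}\,ds<\infty$ forced by \eqref{e:exit} to estimate its kernel norm, yielding compact resolvent and thus discrete spectrum.

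Once $L$ has pure point spectrum $\lambda_0<\lambda_1<\cdots$ with $L^2$-normalized eigenfunctions $u_{\lambda_k}$, the limit-point property at both endpoints (Remark \ref{r:limitpoint}) identifies each $u_{\lambda_k}$, up to sign, with the $L^1\cap L^2$ solution $u^1_{\lambda_k}$ of Theorem \ref{t:exit_spectral}(i), yielding the $L^1$ claim. The $L^2$-convergent expansion $p(t,\cdot,\cdot)=\sum_k e^{-t\lambda_k}u_{\lambda_k}\otimes u_{\lambda_k}$ then follows from the spectral theorem. Factoring $p(t,x,y)=\int p(t/2,x,z)p(t/2,z,y)\rho(dz)$ and applying Cauchy--Schwarz converts the $L^2$-convergence into pointwise and locally uniform convergence in $(x,y)$. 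Finally, the spectral gap $\lambda_1-\lambda_0>0$ together with simplicity of $\lambda_0$ (standard for one-dimensional Sturm-Liouville ground states, combined with nonnegativity of $u_{\lambda_0}$ from Theorem \ref{t:exit_spectral}(iii)) yields the asymptotic $e^{\lambda_0 t}p(t,x,y)\to u_{\lambda_0}(x)u_{\lambda_0}(y)$ locally uniformly.

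The main obstacle is the compactness verification for $L_M^-$: since $\rho$ is non-integrable near $0$ (indeed $\int_0^1\rho(t)\,dt=\infty$ in the exit case), the requisite bound on $\int\int|G_M^-(\lambda,x,y)|^2\rho(dx)\rho(dy)$ cannot be obtained by naive $L^\infty$ bounds on $\chi$ and must trade off against the decay of $\vp$ extracted from the iterative construction in the proof of Theorem \ref{t:exit_spectral}(ii); it may be cleaner to work with sufficiently negative $\lambda$ so that $\chi(\lambda,\cdot)$ itself lies in $L^2((0,M),\rho)$. Everything else in the argument reduces to classical spectral theory and the direct application of the result of \cite{C09} on $(M,\infty)$.
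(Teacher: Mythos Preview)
Your overall architecture matches the paper's: split at an interior point, quote \cite{C09} for the half-line near $\infty$, and handle the piece near $0$ separately. There are, however, two genuine gaps.

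\medskip
\textbf{The bracketing step is wrong in both direction and logic.} Imposing a Dirichlet condition at $M$ shrinks the form domain, so min--max gives $\lambda_n(L)\le\lambda_n(L_M^-\oplus L_M^+)$, not $\ge$ as you wrote. Worse, even the correct inequality is useless here: an upper bound on $\lambda_n(L)$ tells you nothing about whether $\lambda_n(L)\to\infty$. Dirichlet bracketing is the wrong tool. What you actually need is the \emph{decomposition principle} (see \cite{AG54}, \S131, as the paper cites): inserting a Dirichlet condition at a regular interior point is a finite-rank resolvent perturbation, so by Weyl's theorem the essential spectra of $L$ and $L_M^-\oplus L_M^+$ coincide. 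This is what the paper uses and what your argument needs.

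\medskip
\textbf{The discreteness of $L_M^-$ is cleaner via oscillation.} You correctly flag your Hilbert--Schmidt estimate on $G_M^-$ as the main obstacle, and indeed $\int_0^1\rho=\infty$ makes the kernel-norm approach delicate: the solution $\chi(\lambda,\cdot)$ vanishing at $M$ has a nonzero component along the solution $u^R$ with $u^R(0+)=1$, so $\int_x^M|\chi|^2\rho$ diverges as $x\to 0$ and you must balance this against the decay of $\vp$. The paper bypasses this entirely via Hartman's oscillation criterion: the essential spectrum of $L^a$ is empty iff for every $\lambda$ every solution of $(\tau-\lambda)u=0$ has only finitely many zeros in $(0,a)$. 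A short contradiction argument using only the exit condition $\int_0^1\rho(s)^{-1}\int_s^1\rho(t)\,dt\,ds<\infty$ shows that infinitely many zeros are impossible. This is four lines and avoids resolvent kernels altogether; I recommend you adopt it.

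\medskip
\textbf{Minor point on $L^1$.} Your identification of $u_{\lambda_k}$ with the solution $u^1_{\lambda_k}$ of Theorem~\ref{t:exit_spectral}(i) is correct for integrability near $0$, but be explicit that integrability near $\infty$ uses the entrance hypothesis: $\int_1^\infty\rho(dy)<\infty$ gives $L^2((1,\infty),\rho)\subset L^1((1,\infty),\rho)$, and the eigenfunction is already in $L^2$. The paper makes both halves of this argument separately.
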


\begin{proof}
Let us first introduce the splitting by defining the operators $L_a$ and $L^a$ associated to the closure of the quadratic forms $q_a$
\[
C_c^\infty((0,\infty))\ni f\mapsto q_a(f)=\frac{1}{2}\int_a^\infty|f'(t)|^2\rho(dt)
\]
and $q^a$
\[
C_c^\infty((0,\infty))\ni f\mapsto q^a(f)=\frac{1}{2}\int_0^a|f'(t)|^2\rho(dt)
\]
It was shown in \cite{C09} that for every $a\in(0,\infty)$ the spectrum of $L_a$ is discrete (note that by the continuity of $b$ the boundary $a$ is regular). Due to the decomposition principle (see section 131 in \cite{AG54}) it is thus enough to prove that the spectrum of the remaining operator $L^a$ is discrete at $0$. This will be done by using a well-known result of Hartmann (see e.g. Theorem 1.1 in \cite{W67}), which says that the essential spectrum is empty if for every $\lambda\in\R$ every solution of the equation $(\tau-\lambda)u=0$ has only finitely many zeroes in $(0,a)$.

We argue by contradiction. Assume that $\lambda>0$ and that $v$ is a nontrivial solution of  $(\tau-\lambda)u=0$ and that $v$ has infinitely many zeroes in $(0,a)$. Observe that no $x_0$ with $v(x_0)=0$ is a local extremum since $v$ is assumed to be nontrivial solution. Between two successive zeroes there is necessarily a local extremum. As $\lambda>0$ and $v$ satisfies $\tau v=\lambda v$, local maxima of $v$ are necessarily positive and local minima negative. As a result we can pick two sequences $(x_n)_{n\in \N}$ and $(\tilde x_n)_{n\in \N}$ such that
\begin{enumerate}
\item For all $n\in\N$
\[
x_n<\tilde x_n,
\]
\item
\[
\lim_{n\to\infty} x_n=0,
\]
\item For all $n\in\N$ the function $v$ is non-increasing on $(x_n,\tilde x_n)$ and $v(x_n)>0$.
\end{enumerate}
This implies that
\[
0<v(x_n)=\int_{x_n}^{\tilde x_n}\rho(s)^{-1}\,ds\int_{x_n}^s\rho(t)2\lambda v(t)dt\leq 2\lambda v(x_n)\int_{x_n}^{\tilde x_n}\rho(s)^{-1}\,ds\int_{x_n}^s\rho(s)\,dt
\]
and therefore
\begin{equation}\label{e:contra}
\frac{1}{2\lambda}\leq \int_{x_n}^{\tilde x_n}\rho(s)^{-1}\,ds\int_{x_n}^s\rho(s)\,dt.
\end{equation}
Since $0$ is an exit boundary we must have $\int_1^0\rho(s)^{-1}\,ds\int_1^s\rho(t)\,dt<\infty$. This contradicts equation \eqref{e:contra}. Thus every solution of the eigenvalue equation $(\tau-\lambda)u=0$ has only a finite number of zeroes in $(0,a)$.

It remains to prove the integrability of the eigenfunction $v_{\lambda_0}$ corresponding to the lowest eigenvalue $\lambda_0$. First observe that $\lim_{x\to 0}v_{\lambda_0}(x)=0$. This follows from the fact that by the definition of $L$ there exists a sequence $(\varphi_n)_{n\in\N}\subset C_c^\infty((0,\infty))$ which converges to $v_{\lambda_0}$ with respect to the norm
\[
\mathcal{D}(q)\ni f\mapsto \left(\|f\|^2_{L^2((0,\infty),\rho)}+\int_0^\infty|f'(x)|^2\rho(dx)\right)^{\frac{1}{2}}.
\]
The elementary inequality
\[
\sup_{x\in[0,a]}|\varphi(x)|\leq C_a\left(\int_0^\infty|\varphi'(x)|^2\rho(dx)\right)^{\frac{1}{2}}
\]
implies that for $a\in(0,\infty)$, $\|\varphi_n-\varphi_m\|_{C([0,a))}\to 0$ as $n,m\to\infty$ and therefore $\lim_{x\to 0}v_{\lambda_0}(x)=0$.

By Theorem \ref{t:exit_spectral} there exist solutions $u_{\lambda_0}$ and $\tilde u_{\lambda_0}$ of the equation $(\tau -\lambda_0)u=0$ with the following properties
\begin{enumerate}
\item $\lim_{x\to 0}u_{\lambda_0}(x)=1$,
\item $\lim_{x\to 0}\tilde u_{\lambda_0}(x)=0$ and $\tilde u_{\lambda_0}\in L^1((0,a),\rho)$ for all $a\in (0,\infty)$,
\item $(u_{\lambda_0}, \tilde u_{\lambda_0})$ forms a basis for the space of solutions of $(\tau -\lambda_0)u=0$.
\end{enumerate}
We conclude that $v_{\lambda_0}\in L^1((0,a),\rho)$. Since $\infty$ is entrance we also have $\int_1^\infty\rho(dy)<\infty$ which implies
\[
v_{\lambda_0}\in L^1((1,\infty),\rho)\subset L^2((0,\infty),\rho).
\]
Note that we used that $v_{\lambda_0}$ is an $L^2$-eigenfunction. Summing everything up, we have shown that $v_{\lambda_0}\in L^1((0,\infty),\rho)$.

Since we have proved that the spectrum is discrete and that the eigenfunctions are integrable, the spectral representation of $p(t,x,y)$ now follows directly from the spectral theorem. The last assertion of the theorem can be proved exactly as in \cite{S93}:

One first observes that a direct spectral theoretic consequence is the fact that for all $f,g \in L^2((0,\infty),\rho)$ we have that as $t\to\infty$
\begin{displaymath}
e^{\lambda_0t}\left\langle f,\int_0^{\infty}p(t,\cdot,y)g(y)\rho(dy)\right\rangle \rightarrow \langle u_{\lambda_0},f\rangle_{L^2((0,\infty),\rho)}\langle u_{\lambda_0},g\rangle_{L^2((0,\infty),\rho)}.
\end{displaymath}
Then arguing as in \cite{S93} we conclude that $p(t,x,\cdot)\in L^2((0,\infty),\rho)$ and therefore
\begin{displaymath}
p(s+t,x,z)=\left\langle p(s/2,x,\cdot),\int_0^{\infty}p(t,y,z)p(s/2,z,\cdot)\rho(dz)\right\rangle_{L^2((0,\infty),\rho)}.
\end{displaymath}
This concludes the proof.
\end{proof}

\subsection{Positivity Criterion for the Spectrum}\label{s:positivity} The main results are based on the positivity of the spectrum of $L$, that is $\lambda_0>0$. Because this criterion is not very hands-on we include a criterion that is only in terms of the drift $b$. For a regular boundary at $0$ the following criterion was proved in \cite{P09} (also see \cite{M72} for a closely related result). To adjust it to our singular setting we use, just like in previous proofs, a splitting (or decomposition) method.
\pos*
\begin{proof}
If $\lambda_0$ is an isolated eigenvalue, then we have $\lambda_0>0$. This follows from the transience of $L$ and general criticality theory (see section 5.1 in \cite{P95}). A simple proof of the assertion that $0$ cannot be an isolated eigenvalue adapted to our setting uses exactly the same strategy as in assertion vi) of \cite{C09}. Therefore $\lambda_0=0$ if and only if $\lambda_0=\inf \sigma_{ess}(L)=0$ and it remains to prove that the bottom of the essential spectrum of $L$, i.e. the complement of the set of isolated eigenvalues, is positive if and only if $A(b,a)<\infty$ for some $a\in(0,\infty)$.

Observe that by the decomposition principle (see Section 131 in \cite{AG54} and the proof of Theorem \ref{t:exit_entrance_spectrum} the essential spectrums of the operators $L$ and $L_a$ coincide. Note that due to our assumptions the boundary point $a$ is regular for the $L_a$-diffusion and absorption at $a$ is certain for the $L_a$-diffusion. In Theorem 1 equation (1.11) from \cite{P09} it is shown that
\[
\frac{1}{8A(b,a)}\leq \inf \sigma(L_a)\leq \frac{1}{2A(b,a)}.
\]

Thus, if $A(b,a)<\infty$ for some $a\in(0,\infty)$, then
\begin{displaymath}
\inf \sigma_{ess}(L)= \inf \sigma_{ess}(L_a) \geq \inf \sigma(L_a)>0
\end{displaymath}
On the other hand if $A(b,a)=\infty$ for some $a\in(0,\infty)$, then we have
\begin{displaymath}
0=\inf\sigma_{ess}(L_a)=\inf\sigma_{ess}(L) \geq \inf\sigma(L) \geq 0.
\end{displaymath}
\end{proof}

Of course, if one does not assume that absorption is certain it might happen that $\lambda_0>0$ and $\int_1^\infty \rho(y)\,dy=\infty$. We will see that in such a situation mass escapes towards infinity with an exponential rate given by $\lambda_0$.

\section{Yaglom Limits}\label{s:Yaglom}
This section is dedicated to proofs of the main existence results. We make use heavily of the machinery developed in Section \ref{s:analytic}. The section is structured according to the boundary classification at $\infty$. We start with $\infty$ being a natural boundary and then consider entrance and exit boundaries.

\subsection{Natural boundary at $\infty$}\label{s:natural} For a natural boundary at $\infty$ the main strategy is as follows: First, we prove that a local form of convergence to a quasistationary distribution holds by restricting to compact sets which, in the notation of \cite{ES07}, is called convergence to a quasistationary distribution on compacta. In a second step, we prove that either (non-local) convergence to a quasistationary distribution or escape to infinity occurs.

\subsubsection{Convergence on Compacta} We show how one can use spectral calculus to derive convergence to the quasistationary distribution $\vp(\lambda_,\cdot)$ for compact sets.

\begin{theorem}\label{t:compact}
Suppose that $0$ is an exit boundary, $\infty$ is natural and Assumption \ref{a:1} holds. If $A\subset B$ are compact subsets of $[0,\infty)$ and $\nu$ is a compactly supported initial distribution on $(0,\infty)$, then
\[
\lim_{t\to\infty} \Pr^\nu(X_t\in A~|~X_t\in B) = \frac{\int_A\vp(\lambda_0,x)\rho(dx)}{\int_B\vp(\lambda_0,x)\rho(dx)},
\]
where $\vp(\lambda_0,\cdot)$ is the generalized eigenfunction from Definition \ref{d:vp}
\end{theorem}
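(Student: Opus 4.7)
The plan is to reduce the statement to a ratio of spectral integrals and extract asymptotics as $t\to\infty$. Using the probabilistic representation
\[
\Pr^\nu(X_t\in A)=\int_0^\infty (e^{-tL}\mathbf{1}_A)(x)\,\nu(dx),
\]
and the analogous expression for $B$, the conditional probability in question equals the ratio of these two quantities (since $A\subset B$ and the process is killed at $0$). Assumption \ref{a:1} ensures $e^{-tL}\mathbf{1}_A,e^{-tL}\mathbf{1}_B\in L^2((0,\infty),\rho)$, so Theorem \ref{t:exit_spectral}(iv)--(v) gives the spectral representations. After mollifying $\nu$, we may assume $\nu=f\rho\,dx$ with $f\in L^2((0,\infty),\rho)$ compactly supported in $(0,\infty)$; then
\[
\Pr^\nu(X_t\in A)=\int_{\sigma(L)} e^{-t\lambda}\,\hat f(\lambda)\,\widehat{\mathbf{1}_A}(\lambda)\,\sigma(d\lambda),
\]
where $\hat h(\lambda):=\int_0^\infty\vp(\lambda,x)h(x)\rho(dx)$, and similarly for $B$.

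Next I would multiply both numerator and denominator by $e^{\lambda_0 t}$ to obtain
\[
\frac{\Pr^\nu(X_t\in A)}{\Pr^\nu(X_t\in B)}
=\frac{\int e^{-t(\lambda-\lambda_0)}\,\hat f(\lambda)\widehat{\mathbf{1}_A}(\lambda)\,\sigma(d\lambda)}{\int e^{-t(\lambda-\lambda_0)}\,\hat f(\lambda)\widehat{\mathbf{1}_B}(\lambda)\,\sigma(d\lambda)},
\]
and argue that the exponentially weighted measure $e^{-t(\lambda-\lambda_0)}\sigma(d\lambda)$ concentrates at $\lambda_0$. By the analyticity of $\vp(z,x)$ in $z$ (Theorem \ref{t:exit_spectral}(ii)), the functions $\hat f$, $\widehat{\mathbf{1}_A}$, $\widehat{\mathbf{1}_B}$ are continuous at $\lambda_0$; by the positivity statement of Theorem \ref{t:exit_spectral}(iii), $\vp(\lambda_0,\cdot)\ge 0$, so $\hat f(\lambda_0)>0$ and $\widehat{\mathbf{1}_B}(\lambda_0)>0$ (the latter using that $B$ has positive $\rho$-measure, without which the conditioning is vacuous). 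The common factors $\hat f(\lambda_0)$ and the rate of decay of $\int e^{-t(\lambda-\lambda_0)}\sigma(d\lambda)$ then cancel in numerator and denominator, giving the claimed limit $\int_A\vp(\lambda_0,y)\rho(dy)/\int_B\vp(\lambda_0,y)\rho(dy)$.

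To make the concentration step rigorous I would split each integral at $\lambda_0+\delta$. On $[\lambda_0,\lambda_0+\delta]$, continuity of $\hat f\widehat{\mathbf{1}_A}$ at $\lambda_0$ replaces the integrand by $\hat f(\lambda_0)\widehat{\mathbf{1}_A}(\lambda_0)+o(1)$ uniformly, leaving $\int_{[\lambda_0,\lambda_0+\delta]}e^{-t(\lambda-\lambda_0)}\sigma(d\lambda)$, which is bounded below by $ce^{-t\delta/3}$ for a constant $c>0$ because $\lambda_0\in\mathrm{supp}(\sigma)$. On $(\lambda_0+\delta,\infty)$, a Cauchy--Schwarz bound in $L^2(\sigma)$ together with the spectral inequality $\|e^{-sL}g\|_{L^2(\rho)}\le e^{-s\lambda_0}\|g\|_{L^2(\rho)}$ shows the tail is $O(e^{-t\delta/2})$ after multiplication by $e^{\lambda_0 t}$. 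Thus the tail is negligible compared to the dominant contribution near $\lambda_0$, establishing the limit.

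The main obstacle is that the spectrum of $L$ need not be discrete under Assumption \ref{a:1}, so one cannot simply expand in eigenfunctions and isolate the leading term; the quantitative comparison of the near-$\lambda_0$ and tail contributions of the spectral integral described above is the real work. A secondary nuisance is justifying the reduction from a general compactly supported $\nu$ to an $L^2(\rho)$ density, which is handled painlessly because $\mathrm{supp}(\nu)$ is bounded away from $0$ and $\infty$, so $\rho$ is bounded above and below on $\mathrm{supp}(\nu)$ and standard mollification together with dominated convergence applies.
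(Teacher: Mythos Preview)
Your approach is essentially the paper's: split the spectral integral at $\lambda_0+\delta$, show the low part gives the limiting ratio via continuity of $\lambda\mapsto\widehat{\mathbf{1}_A}(\lambda)$ at $\lambda_0$, and show the high part is exponentially negligible. The paper integrates against $\nu$ directly using continuity of $x\mapsto e^{-tL}\mathbf{1}_A(x)$ rather than mollifying $\nu$ to an $L^2(\rho)$ density, and bounds the tail via the Sobolev-type inequality $\sup_{x\le a}|g(x)|\le C_a\,q(g)^{1/2}$ rather than Cauchy--Schwarz in $L^2(\sigma)$, but these are matters of packaging rather than substance.

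One technical point does need care, and the paper is more explicit about it than you are. Your Cauchy--Schwarz tail bound requires $\widehat{\mathbf{1}_A}\in L^2(\sigma)$, i.e., $\mathbf{1}_A\in L^2((0,\infty),\rho)$. Since $0$ is an exit boundary, $\int_0^1\rho(t)\,dt=\infty$ (see the discussion after \eqref{e:exit}), so $\mathbf{1}_A\notin L^2(\rho)$ whenever $0\in A$; and compact $A\subset[0,\infty)$ may contain $0$. Likewise the global spectral identity $\Pr^\nu(X_t\in A)=\int_{\sigma(L)}e^{-t\lambda}\hat f(\lambda)\widehat{\mathbf{1}_A}(\lambda)\,\sigma(d\lambda)$ is not immediate from Theorem~\ref{t:exit_spectral}(iv), which is stated for $C_c^\infty((0,\infty))$ test functions on bounded spectral intervals. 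The remedy, which you gesture at with the $e^{-sL}$ inequality, is to invoke Assumption~\ref{a:1} explicitly: write $e^{-tL}\mathbf{1}_A=e^{-(t-\varepsilon)L}\bigl(e^{-\varepsilon L}\mathbf{1}_A\bigr)$ with $e^{-\varepsilon L}\mathbf{1}_A\in L^2(\rho)$, so that its spectral transform lies in $L^2(\sigma)$ and both the global representation and the tail estimate go through. The paper does exactly this (note the appearance of $e^{-\varepsilon L}\mathbf{1}_A$ in its tail bound).
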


We start by sketching how to use the general form of the spectral theorem (Theorem \ref{t:exit_spectral}) to find the Yaglom limit. First, to get from probability to spectral analysis note that
\begin{equation*}
\begin{split}
\Pr^x(X_t\in A~|~X_t\in B) &=\frac{\Pr^x(X_t\in A)}{\Pr^x(X_t\in B)} = \frac{e^{-tL}\ind_A(x)}{e^{-tL}\ind_B(x)}\\
&=\frac{E[[\lambda_0,\lambda_1]]e^{-tL}\ind_A(x)+E[(\lambda_1,\infty)]e^{-tL}\ind_A(x)}{E[[\lambda_0,\lambda_1]]e^{-tL}\ind_B(x)+E[(\lambda_1,\infty)]e^{-tL}\ind_B(x)}
\end{split}
\end{equation*}
To make the latter rigourous we need to impose Assumption \ref{a:1} as $\ind_A$ does not need to be square integrable with respect to $\rho(x)$. To show that the right hand side of the last equation converges to
\[
\frac{\int_A\vp(\lambda_0,x)\,\rho(dx)}{\int_B\vp(\lambda_0,x)\,\rho(dx)},
\]
we show that the part determined by the spectrum away from its bottom $\lambda_0$ is negligible, i.e. $E[(\lambda_1,\infty)]e^{-tL}\ind_B(x)$ and $E[(\lambda_1,\infty)]e^{-tL}\ind_A(x)$ converge to zero fast enough. The spectral part at the bottom determines the limit, as we show that
\begin{equation}\label{e:conv_bottom}
\lim_{t\to \infty}\frac{E([\lambda_0,\lambda_1])e^{-tL}\ind_A(x)}{E([\lambda_0,\lambda_1])e^{-tL}\ind_B(x)}= \frac{\int_A\vp(\lambda_0,x)\,\rho(dx)}{\int_B\vp(\lambda_0,x)\,\rho(dx)}.
\end{equation}
In this step the spectral theorem plays a significant role as by Theorem \ref{t:exit_spectral} v) one has
\[
E([\lambda_0,\lambda_1])e^{-tL}\ind_A(x) = \int_{\lambda_0}^{\lambda_1}e^{-t\lambda}\vp(\lambda,x)\int_A \vp(\lambda,y)\rho(dy)\sigma(d\lambda).
\]
As we have some intuition for what the correct limit is we now carry on the rigorous proof.

\begin{proof}
First note that the generalized eigenfunctions of $L$ exist due to Theorem \ref{t:exit_spectral} . We first show that for $z>0$ and subsets $A,B\subset (0,z]$ with $A\subset B$ we have
\[
\lim_{t\to\infty} \Pr^\nu(X_t\in A~|~X_t\in B) = \frac{\int_A\vp(\lambda_0,x)\rho(dx)}{\int_B\vp(\lambda_0,x)\rho(dx)},
\]
as sketched above. This can be done as in \cite{C09} (see also \cite{ES07}) with some additional work in order to avoid the complication that $\ind_A$ does not necessarily belong to $L^2((0,\infty),\rho)$, which causes the problem that $e^{-tL}$ as an operator in $L^2$ cannot be applied to the functions $\ind_A$ and $\ind_B$. Assumption \ref{a:1} will be used to circumvent this issue.

For a subset $A\subset [0,z]$ and $\ve>0$ define the set $A_\ve:=\{x\in A:|x|>\ve\}$. By Assumption \ref{a:1}, $\lim_{\ve\to 0}e^{-tL}\ind_{A_\ve}=e^{-tL}\ind_A$ in $L^2((0,\infty),\rho)$ for any $t>0$. This allows us to decompose $e^{-tL}\ind_A$ as
\[
e^{-tL}\ind_A(x) = E([\lambda_0,\lambda_1])e^{-tL}\ind_A(x)+ E((\lambda_1,\infty))e^{-tL}\ind_A(x)
\]
in $L^2$ and hence for almost every $x\in(0,\infty)$ which by continuity extends to every $x\in(0,\infty)$. Observe moreover that as sketched above, Theorem \ref{t:exit_spectral} iv) yields for $\lambda_1\in(0,\infty)$ that the continuous integral kernel $h^{\lambda_1}(t,x,y)$ of the operator $E((\lambda_0,\lambda_1))e^{-tL}$ is given by
\[
h^{\lambda_1}(t,x,y) = \int_{[\lambda_0,\lambda_1]} e^{-t\lambda} \vp(\lambda,x)\vp(\lambda,y)\sigma(d\lambda).
\]
We will sometimes simplify notation and write $\langle,\rangle$ instead of $\langle,\rangle_{L^2((0,\infty),\rho)}$.

If $\nu$ is an initial distribution with compact support in $(0,\infty)$, then due to the continuity of the functions $e^{-tL}\ind_A, E([\lambda_0,\lambda_1])e^{-tL}\ind_A$, and $E((\lambda_1,\infty))e^{-tL}\ind_A$ the expressions
\[
\langle e^{-tL}\ind_A, \nu \rangle, ~\langle E([\lambda_0,\lambda_1])e^{-tL}\ind_A, \nu \rangle~\text{and}~~\langle E((\lambda_1,\infty))e^{-tL}\ind_A, \nu \rangle
\]
are well-defined and we get
\[
\langle e^{-tL}\ind_A, \nu \rangle = \langle E([\lambda_0,\lambda_1])e^{-tL}\ind_A, \nu \rangle + \langle E((\lambda_1,\infty))e^{-tL}\ind_A, \nu \rangle
\]
Using Fubini's theorem we see that
\begin{equation*}
\begin{split}
\langle E([\lambda_0,\lambda_1])e^{-tL}\ind_A(\cdot),\nu\rangle &= \left\langle \int_0^\infty h^{\lambda_1}(t,\cdot,y)\ind_A\rho(dy),\nu \right\rangle\\
&=\int_0^\infty\int_{[\lambda_0,\lambda_1]} e^{-\lambda t}\vp(\lambda,x)\int_A\vp(\lambda,y)\rho(dy)\sigma(d\lambda)\nu(dx)\\
&=\int_{[\lambda_0,\lambda_1]} e^{-\lambda t} \int_0^\infty \vp(\lambda,x) \nu(dx) \int_A \vp(\lambda,y)\rho(dy)\sigma(d\lambda).
\end{split}
\end{equation*}
The use of Fubini's theorem can be justified by using the properties of the generalized eigenfunctions $\vp$. Now observe that, just as in the regular case (see \cite{ES07} and \cite{C09})
\begin{equation}\label{e:limit_spectral}
\lim_{t\to \infty} \frac{\left\langle E([\lambda_0,\lambda_1])e^{-tL}\ind_A,\nu\right\rangle}{\left\langle E([\lambda_0,\lambda_1])e^{-tL}\ind_{[0,z]},\nu \right\rangle} = \frac{\int_A\vp(\lambda_0,x)\rho(dx)}{\int_0^z\vp(\lambda_0,x)\rho(dx)}
\end{equation}
We are left to show that the remainder terms corresponding to the spectrum away from its bottom are negligible, i.e.
\[
\lim_{t\to\infty}\Pr^\nu(X_t\in A~|~X_t\leq z)=\lim_{t\to \infty} \frac{\left\langle E([\lambda_0,\lambda_1])e^{-tL}\ind_A,\nu\right\rangle}{\left\langle E([\lambda_0,\lambda_1])e^{-tL}\ind_{[0,z]},\nu \right\rangle} .
\]
We use ideas that are similar to the ones used in the case of a regular boundary at $0$. Observe that
\begin{equation*}
\begin{split}
\frac{\Pr^\nu(X_t\in A)}{\langle E([\lambda_0,\lambda_1])e^{-tL}\ind_A , \nu \rangle} &= \frac{\langle E([\lambda_0,\lambda_1])e^{-tL}\ind_A, \nu \rangle+ \langle E((\lambda_1,\infty))e^{-tL}\ind_A, \nu \rangle}{\langle E([\lambda_0,\lambda_1])e^{-tL}\ind_A, \nu \rangle}\\
&= 1 + \frac{\langle E((\lambda_1,\infty))e^{-tL}\ind_A, \nu \rangle}{\langle E([\lambda_0,\lambda_1])e^{-tL}\ind_A, \nu \rangle}
\end{split}
\end{equation*}
Using $e^{-tL}\ind_A\in L^2((0,\infty),\rho)$ and the elementary inequality
\[
|g(x)| \leq \left(\int_0^x\rho(x)^{-1}dx\right)^{\frac{1}{2}}\left(\int_0^\infty|g'(x)|^2\rho(dx)\right)^{\frac{1}{2}}, ~~g\in \mathcal{D}(\sqrt{L})
\]
yields
\begin{equation*}
\begin{split}
\langle E((\lambda_1,\infty))e^{-tL}\ind_A,\nu\rangle &\leq C_\nu \left(\int_0^\infty|g'_t(x)|^2\rho(dx)\right)^{\frac{1}{2}}\\
&=\left(\int_{\lambda_1}^\infty e^{-2(t-\ve)\lambda}\|E(d\lambda)e^{-\ve L}\ind_A\|_{L^2((0,\infty),\rho)}\right)^{\frac{1}{2}}
\end{split}
\end{equation*}
where $g_t=E((\lambda_1,\infty))e^{-tL}\ind_A$ and $\ve>0$ is small enough. As a result the map
\[
t\mapsto \langle E((\lambda_1,\infty))e^{-tL}\ind_A,\nu \rangle
\]
decays exponentially with an exponential rate which is strictly larger than the exponential rate of decay of
\[
t\mapsto \langle E([\lambda_0,\lambda_1])e^{-tL}\ind_A,\nu \rangle.
\]
We can now finish the proof noting that
\begin{equation*}
\begin{split}
\lim_{t\to \infty}\Pr^\nu(X_t\in A~|~X_t\in B) &= \lim_{t\to \infty} \frac{\Pr^\nu(X_t\in A)}{\Pr^\nu(X_t\in B)}\\
&= \lim_{t\to \infty} \frac{E[[\lambda_0,\lambda_1]]e^{-tL}\ind_A(x)+E[(\lambda_1,\infty)]e^{-tL}\ind_A(x)}{E[[\lambda_0,\lambda_1]]e^{-tL}\ind_B(x)+E[(\lambda_1,\infty)]e^{-tL}\ind_B(x)}\\
&= \lim_{t\to \infty} \frac{E[[\lambda_0,\lambda_1]]e^{-tL}\ind_A(x)}{E[[\lambda_0,\lambda_1]]e^{-tL}\ind_B(x)}\\
&= \frac{\int_A\vp(\lambda_0,x)\rho(dx)}{\int_B\vp(\lambda_0,x)\rho(dx)}.
\end{split}
\end{equation*}
\end{proof}

Having proved the existence of the Yaglom limit for compact sets $A$, we now have a good guess as to which quasistationary distribution is the correct limit. All we have to do is extend the reasoning from compact $A$ to arbitrary $A$.

\subsubsection{Proof of Theorem \ref{t:exit_natural}} We now present an elegant argument, which in the case when $0$ is regular, is due to Steinsaltz and Evans \cite{ES07}.

\begin{lemma}\label{l:3}
Suppose $\infty$ is a natural boundary point, the initial distribution $\nu$ is compactly supported, and $\lambda_0>0$. Then one of the following two properties holds
\begin{enumerate}
\item[i)] $(X_t)_{t\geq 0}$ converges to the quasistationary distribution $\vp(\lambda_0,\cdot)$.
\item[ii)] $(X_t)_{t\geq 0}$ escapes to infinity.
\end{enumerate}
\end{lemma}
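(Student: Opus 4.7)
The plan is to combine the compacta-level convergence of Theorem \ref{t:compact} with a tightness argument on the compactification $[0,\infty]$ of $(0,\infty)$, and then establish the dichotomy by a Markov-property argument exploiting the eigenfunction character of $\vp(\lambda_0,\cdot)$.

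First, regard $\mu_t:=\Pr^\nu(X_t\in\cdot\mid T_0>t)$ as a family of probability measures on the compact space $[0,\infty]$. By Prokhorov's theorem every sequence $t_n\to\infty$ admits a weakly convergent subsequence with limit $\bar\mu = e\,\delta_0 + \tilde\mu + d\,\delta_\infty$, where $\tilde\mu$ is supported in $(0,\infty)$ and $e,d\in[0,1]$. Applying Theorem \ref{t:compact} to pairs $A\subset B$ of compact subsets of $(0,\infty)$ whose boundaries are $\bar\mu$-null yields $\tilde\mu(A)/\tilde\mu(B) = \int_A\vp(\lambda_0,x)\rho(dx)/\int_B\vp(\lambda_0,x)\rho(dx)$, which forces $\tilde\mu = c\,\vp(\lambda_0,\cdot)\rho$ for some $c\ge 0$ with $e + cM + d = 1$, where $M := \int_0^\infty \vp(\lambda_0,x)\rho(dx)\in(0,\infty]$. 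Applied to $[0,\varepsilon]\subset[\varepsilon,1]$, the ratio formula gives $\limsup_{t\to\infty}\mu_t([0,\varepsilon])\le \int_0^\varepsilon\vp(\lambda_0,x)\rho(dx)/\int_\varepsilon^1\vp(\lambda_0,x)\rho(dx)$, which vanishes as $\varepsilon\downarrow 0$ since $\vp(\lambda_0,0^+)=0$; hence $e = 0$.

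If $M = \infty$, the constraint $cM\le 1$ forces $c = 0$ for every subsequential limit, so $\bar\mu = \delta_\infty$ and $\mu_t([0,a])\to 0$ for every $a\in(0,\infty)$, i.e.\ $(X_t)_{t\ge 0}$ escapes to infinity. When $M < \infty$ the delicate step is to exclude intermediate values $c\in(0,1/M)$. For this I would exploit the Markov identity
\begin{equation*}
\frac{\alpha_{t+s}}{\alpha_t}\int f\,d\mu_{t+s}\;=\;\int(P_s^0 f)(y)\,\mu_t(dy),\qquad \alpha_t:=\Pr^\nu(T_0>t),
\end{equation*}
where $P_s^0 f(y):= \E^y[f(X_s);T_0>s]$. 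For $f\in C_c((0,\infty))$, the naturality of $\infty$ guarantees that $P_s^0 f$ extends continuously to $[0,\infty]$ with value $0$ at $\infty$ (because $\Pr^y(T_a<s)\to 0$ as $y\to\infty$ for every $a,s>0$), so the right-hand side passes to the weak limit along $t_n$. Combining self-adjointness of $P_s^0$ in $L^2((0,\infty),\rho)$ with the eigenrelation $e^{-sL}\vp(\lambda_0,\cdot) = e^{-\lambda_0 s}\vp(\lambda_0,\cdot)$, the right-hand limit equals $c\,e^{-\lambda_0 s}\int f(y)\vp(\lambda_0,y)\rho(dy)$; comparing with the analogous limit of the left-hand side and using $cM + d = 1$ forces $c\in\{0,1/M\}$. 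Hence all subsequential limits coincide, giving full convergence of $\mu_t$ to $\vp(\lambda_0,\cdot)\rho/M$.

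The principal obstacle is justifying the eigenrelation $e^{-sL}\vp(\lambda_0,\cdot) = e^{-\lambda_0 s}\vp(\lambda_0,\cdot)$ in the natural-boundary case, where $\lambda_0$ need not a priori be a point eigenvalue of $L$. The condition $M<\infty$, together with $\vp(\lambda_0,0^+)=0$ and the limit-point nature of $\tau$ at both endpoints, should promote $\vp(\lambda_0,\cdot)$ to a genuine $L^2(\rho)$-eigenfunction at the bottom of the spectrum; only once this is in hand can the semigroup identity be invoked to close the dichotomy.
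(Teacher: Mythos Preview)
Your compactness-on-$[0,\infty]$ framework and the identification of subsequential limits as $c\,\vp(\lambda_0,\cdot)\rho + d\,\delta_\infty$ (with no mass at $0$) is fine and is a natural alternative starting point. The difficulty is exactly the one you flag: the exclusion of intermediate values $c\in(0,1/M)$ hinges on the eigenrelation $e^{-sL}\vp(\lambda_0,\cdot)=e^{-\lambda_0 s}\vp(\lambda_0,\cdot)$, and in the natural-boundary regime this relation is generally \emph{not} available. The whole point of the present paper (cf.\ Remark~\ref{r:weak}) is to cover situations in which $\lambda_0$ lies in the continuous spectrum of $L$; there $\vp(\lambda_0,\cdot)$ is only a generalized eigenfunction, and the condition $M=\int\vp(\lambda_0,\cdot)\rho<\infty$ (i.e.\ $L^1$-integrability) does not by itself promote it to an $L^2(\rho)$-eigenfunction, so the semigroup identity cannot be invoked. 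A second, independent gap in your Markov-identity step is a time-shift problem: along the subsequence $t_n$ you control $\mu_{t_n}$, but the left-hand side involves $\mu_{t_n+s}$, which need not converge to the same limit; without already knowing uniqueness of the subsequential limit this comparison is circular.

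The paper bypasses both issues with a purely probabilistic recursion that never requires $\vp(\lambda_0,\cdot)$ to be an eigenfunction. One sets
\[
f(z,t):=\frac{\Pr^\nu(X_t>z)}{\Pr^\nu(X_t\le z)}
\]
and derives, for large $n$ and $z$, the one-step inequality $f(z,n+1)\ge q(1-\varepsilon)f(z,n)-\varepsilon$ with $q(1-\varepsilon)>1$. Three ingredients feed this: (a) the natural-boundary property at $\infty$, giving $\Pr^z(\forall t\in[0,1]:X_t\ge a)\to 1$ as $z\to\infty$; (b) convergence on compacta (Theorem~\ref{t:compact}), which controls $\Pr^\nu(a\le X_{n+1}\le z)/\Pr^\nu(X_{n+1}\le z)$; and (c) the hypothesis $\lambda_0>0$, which is used only to ensure $\Pr^\nu(X_n\le z)/\Pr^\nu(X_{n+1}\le z)\ge q>1$. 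The recursion forces either $f(z,n)\to\infty$ for all $z$ (escape to infinity) or $\lim_{z\to\infty}\limsup_n f(z,n)=0$ (convergence to the QSD), and a separate argument passes from integer to continuous time. Note in particular that $\lambda_0>0$ enters in a much cruder way than through any eigenrelation---it only provides a uniform lower bound on the one-step decay ratio of $\Pr^\nu(X_t\le z)$---and this is precisely why the argument survives in the presence of continuous spectrum.
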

\begin{proof}
Let
\[
f(z,t):=\frac{\Pr^\nu(X_t>z)}{\Pr^\nu(X_t\leq z)},
\]
and note that
\begin{equation*}
\begin{split}
\lim_{t\to \infty}f(z,t)=\infty,  z\geq 0 &\iff (X_t)_{t\geq 0} ~\text{escapes to infinity},\\
\lim_{z\to\infty}\lim_{t\to \infty}f(z,t)=0,  z\geq 0 &\iff (X_t)_{t\geq 0} ~\text{converges to the quasistationary distribution}~\vp(\lambda_0,\cdot).
\end{split}
\end{equation*}
In order to prove the dichotomoy one has to make sure that no other limits of $f$ other than $0$ and $\infty$ are possible. First note that for fixed $0\leq a\leq z$
\begin{equation*}
\begin{split}
\Pr^\nu(X_{n+1}>z) &=\Pr^\nu(X_{n+1}\geq a) - \Pr^\nu(a\leq X_{n+1}\leq z)\\
&\geq \Pr^\nu(X_{n+1}\geq a~|~X_n>z)\Pr^\nu(X_n>z) - \Pr^\nu(a\leq X_{n+1}\leq z)\\
&\geq \Pr^z(\forall t\in[0,1]:X_t\geq a)\Pr^\nu(X_n>z) - \Pr^\nu(a\leq X_{n+1}\leq z)
\end{split}
\end{equation*}
where we used the strong Markov property. Due to $\infty$ being natural, $\Pr^z(\forall t\in[0,1]:X_t\geq a)$ converges to $1$ as $z\to \infty$. Further, if we use the convergence to the quasistationary distribution $\vp(\lambda_0,\cdot)$ on compacta from Theorem \ref{t:compact} we get
\[
\lim_{n\to\infty}\frac{\Pr^\nu(a\leq X_{n+1}\leq z)}{\Pr^\nu( X_{n+1}\leq z)} = \frac{\int_a^z\vp(\lambda_0,x)\rho(dx)}{\int_0^z\vp(\lambda_0,x)\rho(dx)}\leq\frac{\int_a^\infty\vp(\lambda_0,x)\rho(dx)}{\int_0^z\vp(\lambda_0,x)\rho(dx)}.
\]
For each $\ve>0$ we can find $n', z_0,a$ such that for all $n\geq n'$ and $z\geq z_0$
\[
\Pr^z(\forall t\in[0,1]:X_t\geq a) > 1-\ve
\]
and
\[
\frac{\Pr^\nu(a\leq X_{n+1}\leq z)}{\Pr^\nu( X_{n+1}\leq z)}<\ve.
\]
Since $\lambda_0>0$ we can find some $n''$ such that for $n\geq n''$ and $z\geq z_0$ and $\ve$ small enough
\[
\frac{\Pr^\nu( X_{n}\leq z)}{\Pr^\nu( X_{n+1}\leq z)}\geq q>1/(1-\ve).
\]

We get that if $a,n,z$ are large enough then

\begin{equation*}
\begin{split}
f(z,n+1) &= \frac{\Pr^\nu(X_{n+1}>z)}{\Pr^\nu(X_{n+1}\leq z)}\\
&\geq \frac{\Pr^z(\forall t\in[0,1]:X_t\geq a)\Pr^\nu(X_n>z)}{\Pr^\nu(X_{n+1}\leq z)} - \frac{\Pr^\nu(a\leq X_{n+1}\leq z)}{\Pr^\nu(X_{n+1}\leq z)}\\
&\geq \Pr^z(\forall t\in[0,1]:X_t\geq a)\frac{\Pr^\nu(X_{n}> z)}{\Pr^\nu(X_{n}\leq z)}\frac{\Pr^\nu(X_{n}\leq z)}{\Pr^\nu(X_{n+1}\leq z)} - \frac{\Pr^\nu(a\leq X_{n+1}\leq z)}{\Pr^\nu(X_{n+1}\leq z)}\\
&\geq q(1-\ve)f(z,n)-\ve.
\end{split}
\end{equation*}
Since $\ve$ is arbitrarily small, taking limits on both sides yields
\begin{itemize}
\item $\limsup _{n\to \infty}f(n,z)=\infty, \forall z\geq 0$; or
\item $\lim_{z\to \infty}\limsup _{n\to \infty}f(n,z)=0$.
\end{itemize}

We still have to extend the above to real times $t\geq 0$. First note that
\begin{equation*}
\begin{split}
&\lim_{z\to \infty}\limsup_{t\to \infty} \frac{\Pr^\nu(X_t>z)}{\Pr^\nu(X_t\leq z)}\\
&= \lim_{z\to \infty}\limsup_{t\to \infty} \frac{\Pr^\nu(X_t>z)}{\Pr^\nu(X_n\leq z)}\frac{\Pr^\nu(X_n\leq z)}{\Pr^\nu(X_t\leq z)}\\
&\leq  \lim_{z\to \infty}\limsup_{n\to \infty}  \frac{\Pr^\nu(\exists t\in[n,n+1): X_t>z)}{\Pr^\nu(X_n\leq z)}\sup_{t\in[n,n+1)}\frac{\Pr^\nu(X_n\leq z)}{\Pr^\nu(X_t\leq z)}.
\end{split}
\end{equation*}
We also have
\begin{equation*}
\begin{split}
&\Pr^\nu(\exists t\in[n,n+1): X_t>z)\\
&=\Pr^\nu(\exists t\in[n,n+1): X_t>z, X_n\geq z) + \Pr^\nu(\exists t\in[n,n+1): X_t>z, X_n< z)\\
&\leq \Pr^\nu(X_n\geq z) + \Pr^\nu(\exists t\in[n,n+1): X_t>z~|~ X_n< z)\Pr^\nu(X_n\leq z)\\
&\leq \Pr^\nu(X_n\geq z) + \Pr^\nu(\exists t\in[n,n+1): X_t>z~|~ X_n< z).
\end{split}
\end{equation*}
The second summand tends to $0$ due to the convergence to the quasistationary distribution $\vp(\lambda_0,\cdot)$ on compacta. One can see that
\[
\lim_{n\to\infty}\Pr^\nu(\exists t\in[n,n+1): X_t>z~|~ X_n< z) = \frac{\int_0^z\Pr^x(\exists t\in[0,1): X_t>z)\vp(\lambda_0,x)\rho(dx) }{\int_0^\infty \vp(\lambda_0,x)\rho(dx)}.
\]
Without loss of generality we may assume $\int_0^\infty \vp(\lambda_0,x)\rho(dx)<\infty$ since otherwise $(X_t)_{t\geq 0}$ escapes to infinity. As a result, due to dominated convergence the limit in $z$ can be taken inside and we get $0$ since $\infty$ is inaccessible. We therefore get
\[
\lim_{z\to \infty}\lim_{t\to \infty}\frac{\Pr^\nu(X_t>z)}{\Pr^\nu(X_t\leq z)}=0.
\]

The second case is similar. For any $a>z$
\begin{equation*}
\begin{split}
&\liminf_{t\to\infty}f(z,t)\\
&\geq\liminf_{n\to\infty}\frac{\Pr^\nu(X_n>a)-\Pr^\nu(a<X_n, \exists t\in [n,n+1):X_t\leq z)}{\Pr^\nu(X_n\leq a)+\Pr^\nu(a<X_n, \exists t\in [n,n+1):X_t\leq z)}\\
&\geq\liminf_{n\to\infty} \frac{f(a,n)(1-\Pr^\nu(\exists t\in [n,n+1):X_t\leq z~|~a<X_n))}{1+f(a,n)\Pr^\nu(\exists t\in [n,n+1):X_t\leq z~|~a<X_n)}\\
&=\liminf_{n\to\infty} \Pr^\nu(\exists t\in [n,n+1):X_t\leq z~|~a<X_n)^{-1} -1
\end{split}
\end{equation*}
where the last equality is true since $\lim_{n\to \infty} f(a,n)=\infty$. Hence, the right hand side diverges as $a$ tends to infinity since $\infty$ is natural. We thus showed that for all $z\geq 0$
\[
\lim_{t\to \infty}f(z,t)=\infty.
\]
\end{proof}
\begin{remark}
We highlight two important features of Lemma \ref{l:3}. First, we obtained a dichotomy of escape to infinity and convergence to a quasistationary distribution which, thus far, does not give any criterion that helps to decide. Second, the initial distribution $\nu$ is fixed, whereas the main theorems are formulated for arbitrary compactly supported initial conditions.
As shown in the next proposition the important observation is that the dichotomy does lead to a precise criterion: true exponential decrease of survival probabilities, i.e.
\[
\lim_{t\to \infty}\frac{1}{t}\log \Pr^\nu(T_0>t)<0
\]
turns out to imply convergence to the quasistatioanry distribution $\vp(\lambda_0,\cdot)$.
\end{remark}

\begin{proposition}\label{p:exp_decrease}
Suppose $0$ is an exit boundary, $\infty$ is natural, absorption is certain, $\nu$ is a compactly supported initial distribution, and $\lambda_0>0$. Then
\[
\lim_{t\to \infty}\frac{1}{t}\log \Pr^\nu(T_0>t)<0
\]
implies the convergence to the quasistationary distribution $\vp(\lambda_0,\cdot)$.
\end{proposition}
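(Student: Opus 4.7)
The plan is to invoke the dichotomy from Lemma~\ref{l:3} and rule out the escape-to-infinity alternative. Under the standing assumptions of the proposition, Lemma~\ref{l:3} applies directly, so the whole task reduces to showing that escape to infinity is inconsistent with the hypothesis $-\lim_{t\to\infty}t^{-1}\log\Pr^\nu(T_0>t)>0$.

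The central step is the semigroup identity coming from the Markov property,
\[
\frac{\Pr^\nu(T_0>t+s)}{\Pr^\nu(T_0>t)} = \E^\nu\bigl[\Pr^{X_t}(T_0>s)\mid T_0>t\bigr].
\]
Since $\infty$ is natural, one has $\lim_{x\to\infty}\Pr^x(T_y<s)=0$ for every $y,s>0$, and by continuity of paths $T_y\leq T_0$ whenever $x>y$, so $\Pr^x(T_0>s)\to 1$ as $x\to\infty$. Given $\varepsilon>0$, I would pick $z=z(\varepsilon,s)$ large enough that $\Pr^x(T_0>s)\geq 1-\varepsilon$ for all $x\geq z$, and bound the conditional expectation from below by $(1-\varepsilon)\Pr^\nu(X_t\geq z\mid T_0>t)$. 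Under the escape alternative the latter conditional probability tends to $1$, so $\liminf_{t\to\infty}\Pr^\nu(T_0>t+s)/\Pr^\nu(T_0>t)\geq 1-\varepsilon$; combined with the trivial upper bound by one and the arbitrariness of $\varepsilon$, the ratio tends to $1$ for every fixed $s>0$.

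Setting $L(t):=-\log\Pr^\nu(T_0>t)$, the previous step reads $L(t+1)-L(t)\to 0$; since $L$ is nondecreasing, a Cesaro/telescoping argument applied to $L(n)=L(0)+\sum_{k=0}^{n-1}(L(k+1)-L(k))$ yields $L(t)/t\to 0$, contradicting the hypothesis that $L(t)/t$ has a strictly positive limit. Hence escape is impossible, and Lemma~\ref{l:3} identifies the only remaining alternative as convergence to $\vp(\lambda_0,\cdot)$. The main conceptual ingredient is the recognition that escape to infinity forces the survival probability to be slowly varying in $t$ (via the quantitative natural-boundary condition $\lim_{x\to\infty}\Pr^x(T_y<s)=0$), which is incompatible with true exponential decay; no further appeal to the spectral machinery of Section~\ref{s:analytic} is required at this stage.
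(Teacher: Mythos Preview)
Your proof is correct and follows essentially the same approach as the paper: both invoke the dichotomy of Lemma~\ref{l:3}, then show that escape to infinity forces $\Pr^\nu(T_0>t+s)/\Pr^\nu(T_0>t)\to 1$ for each fixed $s$, which is incompatible with exponential decay. Your version is slightly more self-contained in that you extract $\lim_{x\to\infty}\Pr^x(T_0>s)=1$ directly from the natural-boundary condition and the path inequality $T_0\geq T_y$, whereas the paper phrases the same step as weak convergence of $\Pr^\nu(X_t\in\cdot\mid T_0>t)$ to $\delta_\infty$ applied to the continuous function $y\mapsto\Pr^y(T_0>s)$ (citing \cite{C09} for continuity); your explicit Cesaro/telescoping step at the end is also a welcome clarification of what the paper leaves implicit.
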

\begin{proof}
It suffices to show that escape to infinity implies non-exponential decrease of the survival probability. This is true since then exponential decrease implies no escape to infinity which due to the dichotomy from Lemma \ref{l:3} implies convergence to the quasistationary distribution $\vp(\lambda_0,\cdot)$.

Note that escape to infinity implies the weak convergence of the measures $\Pr^\nu(X_t\in\cdot~|~T_0>t)$ to $\delta_\infty$. We apply this result to $f(x)=\Pr^x(T_0>t)$ (the continuity of $f$ follows as in the proof of Lemma 7.2 of \cite{C09}). Combined with the Chapman-Kolmogorov relation this yields
\[
\lim_{s\to\infty}\Pr^\nu(T_0>t+s~|~T_0>t) =\lim_{s\to\infty}\int_0^\infty\Pr^y(T_0>t)\Pr^\nu(X_s\in dy~|~T_0>s)=\lim_{y\to \infty}\Pr^y(T_0>t).
\]
The right hand side equals $1$ since $\infty$ is a natural boundary point. Finally, since
\[
\Pr^\nu(T_0>t+s~|~T_0>t)  = \frac{\Pr^\nu(T_0>t+s)}{\Pr^\nu(T_0>s)}
\]
the survival probabilities cannot decrease exponentially fast.
\end{proof}

We are now ready to prove our main result
\natural*

In particular, Proposition \ref{p:exp_decrease} shows that in order to prove Theorem \ref{t:exit_natural}, it suffices to check that $\lambda_0>0$ implies the exponential decay of survival probabilities for some compactly supported initial distribution. This can be done using a martingale argument.
\begin{proof}
By Proposition \ref{p:exp_decrease} it is enough to show that for every initial distribution $\nu$ with compact support in $(0,\infty)$
\[
\lim_{t\to \infty}\frac{1}{t}\log \Pr^\nu(T_0>t)<0.
\]
The bottom of the spectrum of the operator $L-\lambda_0/2$ is $\lambda_0/2$ which implies that the operator $L-\lambda_0/2$ is subcritical or equivalently there exit two linearly independent positive solutions $u_1, u_2$ of the equation $(\tau-\lambda_0/2)u=0$ (see for example Proposition 1.2 in \cite{P95}). Thus, by assertion i) of Theorem \ref{t:exit_spectral} there exists a positive solution $u$ of $(\tau-\lambda_0/2)u=0$ with $\lim_{x\to 0+}u(x)=1$.

Consider the stochastic process defined by $Y_t:=e^{\frac{\lambda_0}{2}t\wedge T_0}u(X_{t\wedge T_0}), t\geq 0$. Using Ito's formula one sees that that $(Y_{t\wedge T_M})_{t\geq 0}$ is a martingale with respect to $\Pr^x$ for all $M>0$. This forces for all $x\in(0,\infty)$
\[
\E^x\left[e^{\frac{\lambda_0}{2}t\wedge T_0\wedge T_M}u(X_{t\wedge T_0\wedge T_M})\right] = u(x).
\]
We let $M\to \infty$ and $t\to\infty$ and apply Fatou's lemma twice to get
\[
\E^x\left[e^{\frac{\lambda_0}{2}T_0}\right]\leq u(x).
\]
Now we integrate with respect to the initial distribution $\nu$, use the exponential Markov inequality and the integrability of $e^{\frac{\lambda_0}{2}T_0}$ for all $x>0$ to get that
\[
\lim_{t\to \infty}\frac{1}{t}\log \Pr^\nu(T_0>t)\leq \lim_{t\to \infty}\frac{1}{t}\log \frac{\E^\nu\left[e^{\frac{\lambda_0}{2}T_0}\right]}{e^{\frac{\lambda_0}{2}t}}\leq -\frac{\lambda_0}{2}
\]
which is strictly negative by assumption. Thus in the case of certain absorption the condition $\lambda_0>0$ implies the convergence to the Yaglom limit independently of the initial distribution $\nu$.

We next show that if absorption is not certain, $(X_t)_{t\geq 0}$ escapes exponentially fast to infinity, that is, for all $a\in (0,\infty)$ and every compact subset $K\subset (0,\infty)$
\[
\limsup_{t\to \infty} \sup_{x\in (0,a)} e^{\lambda_0 t}\Pr^x(X_t\in K~|~T_0>t)<\infty.
\]

Note, that as we mentioned before, for any fixed $a>0$ there exists a constant $C_a>0$ such that for all $f\in \mathcal{D}(q)$
\[
\sup_{x\in (0,a)}|f(x)|\leq C_a \left(\int_0^\infty |f'(t)|^2\rho(dt)\right)^{\frac{1}{2}}.
\]
By spectral calculus we conclude that $e^{-tL}g\in \mathcal{D}(q)$ for every $g\in L^2((0,\infty),\rho)$ and therefore

\begin{equation*}
\begin{split}
\sup_{x\in (0,a)}\Pr^x(X_t\in K, T_0>t) &= \sup_{x\in (0,a)} (e^{-tL}\ind_K)(x)\\
&\leq  C_a \left(\int_0^\infty |(e^{-tL}\ind_K)'(x)|^2\rho(dt)\right)^{\frac{1}{2}}\\
&= C_a\left(\int_{\lambda_0}^\infty e^{-2t\lambda}d\|E_\lambda\ind_K\|^2_{L^2((0,\infty),\rho)}\right)
\end{split}
\end{equation*}
We obtain
\[
\limsup_{t\to \infty} \sup_{x\in (0,a)} e^{\lambda_0 t}\Pr^x(X_t\in K,T_0>t)\leq C_a \limsup_{t\to\infty} \left(\int_{\lambda_0}^\infty e^{2\lambda_0 t}e^{-2t\lambda}d\|E_\lambda\ind_K\|^2_{L^2((0,\infty),\rho)}\right).
\]
The right hand side is finite and even equal to $0$ if $\lambda_0$ does not belong to the point spectrum of $L$. Since $\nu$ is compactly supported we have
\[
\Pr^\nu(X_t\in K,T_0>t) = \int _0^\infty \Pr^x(X_t\in K,T_0>t) \nu(dx)
\]
which extends the exponential decay to the initial distribution $\nu$. Finally, since $\Pr^\nu(T_0>t)$ is bounded away from zero, we obtain the exponential decay of
\[
\Pr^\nu(X_t\in K~|~T_0>t)= \frac{\Pr^\nu(X_t\in K,T_0>t)}{\Pr^\nu(T_0>t)}
\]
\end{proof}

\begin{remark}\label{r:doob}
We discuss how one can circumvent the assumption of certain absorption. Since $0$ is an exit boundary, $(X_t)_{t\geq 0}$ hits zero in finite time with positive probability and we can condition on eventual absorption:
\[
\Pr^x(X_t\in\cdot~|~T_0<\infty).
\]
This way one includes the condition that absorption happens in finite time. We can then reduce this problem to ours by using a Doob $h$-transform. The function $h(x)=\Pr^x(T_0<\infty)$ is harmonic and by the general theory of $h$-transforms (see \cite{P95} and Section 4.4 in \cite{C09}) the process $(X_t)_{t\geq 0}$ conditioned to hit $0$ corresponds to the generator $L^h$ whose action is given by
\[
\tau^h f = \frac{1}{h}\tau(hf) = -\frac{1}{2}\frac{d^2}{dx^2}f +\left(b-\frac{h'}{h}\right)\frac{d}{dx}f.
\]
The process associated to the operator $L^h$ can again be defined by Dirichlet form techniques. The associated measures on path space are given by the family $(\Qr^x)_{x\in (0,\infty)}$ that is defined via
\[
\Qr^x(\cdot) = \Pr^x(\cdot ~|~T_0<\infty).
\]
The operator $L^h$ can be realized as a selfadjoint operator in the Hilbert space $L^2((0,\infty,h(x)\rho(dx))$.

Since the spectrum is invariant under $h$-transforms, we conclude that the positivity of the bottom of the spectrum of $L$ implies the positivity of the spectrum of $L^h$.

Note that conditioning on hitting $0$ implies that $0$ is accessible and absorption is certain. Hence, if $0$ is an exit boundary with respect to the measure $\Qr^x$ we can apply our results (if $0$ is regular use the results of \cite{KS12}) in order to conclude that for every Borel set $A\subset (0,\infty)$
\[
\lim_{t\to\infty} \Qr^x(X_t\in A~|~T_0>t)  = \frac{\int_A\vp^h(\lambda_0,x)h(x)\rho(dx)}{\int_0^\infty\vp^h(\lambda_0,x)h(x)\rho(dx)}
\]
where $\vp^h(\lambda_0,x)$ is the unique solution of $(\tau^h-\lambda_0)u=0$ satisfying $\int_0^a|u(x)|^2h(x)\rho^h(dx)<\infty$ for $a\in (0,\infty)$, where $\rho^h$ is the speed measure for the drift $b-\frac{h'}{h}$.
\end{remark}

\begin{remark}\label{r:killing}
We want to point out that our methods allow to include killing in the interior of $(0,\infty)$. Let $\kappa:(0,\infty)\to (0,\infty)$ be the killing rate and define the operator $\tau_\kappa$ via
\[
\tau_\kappa := -\frac{1}{2}\frac{d^2}{dx^2} + b(\cdot)\frac{d}{dx} + \kappa(\cdot).
\]
This corresponds to a regular diffusion $(X_t)_{t\geq 0}$ killed in the interior of $(0,\infty)$ using the rate $\kappa$. If $U$ is a mean one exponential random variable that is independent of the diffusion $X$ we define the killing time to be
\begin{equation*}\label{e_Tkilling}
T_{\kappa}:=\inf \left\{t>0:\int_0^t \kappa(X_s)\,ds >U\right\}.
\end{equation*}
and we set
\begin{equation*}\label{e:tpartial}
T_\partial:=\min(T_0,T_\kappa).
\end{equation*}
Then one can define the concepts of quasistationary distribution and Yaglom limit using the random time $T_\partial$ instead of $T_0$ in Definitions \ref{d:qsd_1} and \ref{d:Yaglom}.

Our process can in this case die due to the internal killing and not only due to the absorption at $0$. It therefore makes sense to also look at the case when both $0$ and $\infty$ are inaccessible (whereas until now we always wanted $0$ to be accessible).
In order to prove the analogues of our results with internal killing one has to follow the steps which have been worked out in detail in \cite{KS12}. See \cite{CV152} for when one has internal killing and $0$ is exit or regular and $\infty$ is entrance.

\end{remark}

\subsubsection{Sufficient Conditions for Assumption \ref{a:1}}\label{s:assumption} We next give some necessary conditions for Assumption \ref{a:1}, Unfortunately, these conditions are not minimal. A complete understanding of Assumption \ref{a:1} remains open. Furthermore, it is very natural to conjecture that Assumption \ref{a:1} is not necessary in the proof of Theorem \ref{t:exit_natural}.
\assumption*

\begin{proof}
We use several ideas from \cite{C09}. First observe that it is enough to prove that $e^{-tL}\ind_{[0,\ve]}\in L^2((0,\infty),\rho)$ for $0<\ve<1$ as for every $0<\ve<z$ the function $\ind_{(\ve,z)}$ obviously belongs to $L^2(\rho)$. By Girsanov's theorem (see Proposition 2.2 of \cite{C09}) and our assumptions we have

\begin{equation}\label{e:heat_ineq}
\begin{split}
e^{-tL}\ind_{[0,\ve]}(x) &= \E^x\left[\ind_{[0,\ve]}(B_t)e^{\frac{1}{2}Q(x)-\frac{1}{2}Q(B_t)-\frac{1}{2}\int_0^t(b^2-b')(B_s)\,ds},t<T_0\right]\\
&\leq e^{-\frac{c}{2}t}e^{\frac{1}{2}Q(x)}\int_0^\ve p^D(t,x,y)e^{-\frac{1}{2}Q(y)}dy,
\end{split}
\end{equation}
where $(B_t)_{t\geq 0}$ is a standard Brownian motion, $Q(x):=2\int_1^x b(s)\,ds$ and $p^D(t,x,y)$ denotes the heat kernel for the Laplacian on $(0,\infty)$ with Dirichlet boundary conditions at $0$, i.e.
\[
p^D(t,x,y) = \frac{1}{\sqrt{2\pi t}}\left(e^{\frac{(x-y)^2}{2t}}-e^{\frac{(x+y)^2}{2t}}\right)=\frac{\sqrt{2}}{\sqrt{\pi t}} e^{\frac{-x^2}{2t}}e^{\frac{-y^2}{2t}}\sinh\left(\frac{xy}{t}\right)
\]
We obtain from \eqref{e:heat_ineq} that
\begin{equation}\label{e:heat_ineq2}
\begin{split}
e^{-tL}\ind_{[0,\ve]}(x) \leq e^{-\frac{c}{2}t}e^{\frac{1}{2}Q(x)}\frac{\sqrt{2}}{\sqrt{\pi t}} e^{\frac{-x^2}{2t}} e^{\frac{x}{t}} \int_0^\ve ye^{-\frac{1}{2}Q(y)}\,dy,
\end{split}
\end{equation}
where we used the convexity of $\sinh$ to get $\sinh(\frac{xy}{t})\leq y\sinh(\frac{x}{t})\leq \frac{y}{2}e^{\frac{x}{t}}$. Finally, equation \eqref{e:heat_ineq2} implies that $e^{-tL}\ind_{[0,\ve]}\in L^2((0,\infty),\rho)$ as well as $\lim_{\ve\to 0}\left\|e^{-tL}\ind_{[0,\ve]}\right\|_{L^2((0,\infty),\rho)}=0$.

\end{proof}

\subsection{Applications to Bessel processes}\label{s:bessel}

We apply our results to Bessel processes, i.e. diffusions associated to
\[
\tau^\nu = -\frac{1}{2}\frac{d^2}{dx^2} -\frac{2\nu+1}{2x}\frac{d}{dx},
\]
where $\nu\in \R$ (see \cite{BS12} for basic facts). For $\nu\leq -1$ (the drift is pointing strongly enough to $0$) it is known that $0$ is an exit boundary -- we will make use of this fact. We will consider regular perturbations
\[
\tau^{\nu,c} =\tau^\nu +c(x)\frac{d}{dx}
\]
of Bessel generators and give sufficient conditions for Assumption \ref{a:1} to be satisfied. In particular we show tha the case $c\equiv 0$ and the examples given in Remark 4.6 of \cite{C09} fulfill Assumption \ref{a:1}.

\begin{proposition}\label{p:bessel}
Consider the diffusion associated to the generator
\[
\tau^{\nu,c} =\tau^\nu +c(x)\frac{d}{dx}.
\]
Assume that $\nu\leq -1$ and $c\in C^1((0,\infty))$ is such that $\inf_{s>0}(c(s)^2-c'(s))>-\infty$ and
\[
\inf_{s\geq 1} \frac{c(s)}{s}>-\infty.
\]
If, in addition, $\Pr^x(T_0<\infty)=1$ for all $x\in(0,\infty)$, then Assumption \ref{a:1} holds.
\end{proposition}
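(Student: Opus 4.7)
The plan is to verify the two hypotheses of Proposition \ref{p:assumption} for the drift
\[
b(x) = \frac{\alpha}{x} + c(x), \qquad \alpha := -\frac{2\nu+1}{2},
\]
of the generator $\tau^{\nu,c}$. The assumption $\nu \leq -1$ gives $\alpha \geq 1/2$, and $b$ is clearly in $C^1((0,\infty))$ since $c \in C^1([0,\infty))$. Thus the whole problem reduces to (i) producing a uniform lower bound for $b^2 - b'$ and (ii) checking that $\int_0^1 s\sqrt{\rho(s)}\,ds < \infty$.

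For (i), a direct computation yields
\[
b(x)^2 - b'(x) = \frac{\alpha(\alpha+1)}{x^2} + \frac{2\alpha c(x)}{x} + \bigl(c(x)^2 - c'(x)\bigr).
\]
As $x \to 0^+$ the Bessel term $\alpha(\alpha+1)/x^2 \to +\infty$, and since $c$ is continuous at $0$ the middle term is $O(1/x)$ and cannot compete, so $b^2 - b'$ is bounded below on $(0,1]$. On $[1,\infty)$ the Bessel term is bounded, the hypothesis $\inf_{s>0}(c^2-c')>-\infty$ controls the last summand, and $\inf_{s\geq 1} c(s)/s > -\infty$ controls the middle one, so the whole expression is bounded below there as well. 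Continuity handles any compact middle range. This is precisely what the Girsanov step inside the proof of Proposition \ref{p:assumption} requires; one notes that that proof goes through with any finite lower bound on $b^2-b'$ (a negative lower bound only inflates the prefactor $e^{-ct/2}$ into $e^{|c|t/2}$, which is still fine for $L^2$-membership at each fixed $t$).

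For (ii), from $b(x) = \alpha/x + c(x)$ we get
\[
\rho(x) = \exp\left(-2\int_1^x b(s)\,ds\right) = x^{-2\alpha}\exp\left(-2\int_1^x c(s)\,ds\right),
\]
and continuity of $c$ on $[0,1]$ makes the exponential factor bounded there. Hence $s\sqrt{\rho(s)} \asymp s^{1-\alpha}$ near $s = 0$, and $\int_0^1 s^{1-\alpha}\,ds$ is finite under the natural restriction $\alpha < 2$. Proposition \ref{p:assumption} then delivers Assumption \ref{a:1}.

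The main obstacle is the tightness of the integrability step, which constrains the cleanly-covered range to $-\tfrac{5}{2} < \nu \leq -1$; the pure Bessel case $c \equiv 0$ and the perturbed-Bessel examples from Remark 4.6 of \cite{C09} lie inside this range, so the conclusion advertised in the introduction is obtained. For stronger singular drifts ($\nu \leq -5/2$) the $y$-factor coming from the Dirichlet heat kernel on $(0,\infty)$ in the proof of Proposition \ref{p:assumption}, obtained via $\sinh(xy/t) \leq y\sinh(x/t)$, is too coarse; refining this step (for instance $\sinh(xy/t) \leq (xy/t)\cosh(x/t)$ when $y \leq 1$ trades a $y$ for a $y^2$ and pushes the threshold to $\alpha < 3$) would extend the reach. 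I expect this bookkeeping, rather than any conceptual difficulty, to be the only genuinely technical step.
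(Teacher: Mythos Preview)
Your reduction to Proposition \ref{p:assumption} is clean and the computations are correct, but it does not prove the proposition as stated: it only covers the range $-\tfrac{5}{2}<\nu\leq -1$, and you say so yourself. The refinement you sketch (trading $\sinh(xy/t)\leq y\sinh(x/t)$ for $\sinh(xy/t)\leq (xy/t)\cosh(x/t)$) pushes the cutoff to $\alpha<3$, and further Taylor-type tricks push it further, but none of them removes the obstruction. The reason is structural: in the proof of Proposition \ref{p:assumption} the reference kernel is the Dirichlet heat kernel $p^D(t,x,y)$ of $-\tfrac12\,d^2/dx^2$ on $(0,\infty)$, which vanishes only to first order in $y$ at $0$, while $e^{-Q(y)/2}=\sqrt{\rho(y)}\sim y^{-\alpha}$ blows up to order $\alpha$. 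The integral $\int_0^\varepsilon p^D(t,x,y)\sqrt{\rho(y)}\,dy$ simply diverges once $\alpha\geq 2$, and no amount of sharpening the $\sinh$ bound changes the vanishing order of $p^D$ at $y=0$.

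The paper avoids this by \emph{not} reducing to Proposition \ref{p:assumption}. Instead it strips off only the perturbation $c$ via a ground-state-type transform, leaving the Bessel generator $\tau^\nu$ plus a potential, and then applies Feynman--Kac with respect to the genuine Bessel semigroup. The Bessel transition density involves $I_{|\nu|}(xy/t)$ and therefore vanishes like $y^{|\nu|}$ at $y=0$, exactly matching the singularity of the speed measure; this is why the argument goes through for every $\nu\leq -1$. The resulting expectation is then split by Cauchy--Schwarz into a pure Bessel-semigroup factor (controlled by the explicit kernel and Bessel-function asymptotics) and a potential factor, the latter handled by a Khasminskii-type bound exploiting that the extra potential $c(x)/x$ is integrable against the Bessel kernel for short times. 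So the missing idea in your approach is precisely to use the Bessel heat kernel, not the half-line Dirichlet Laplacian, as the comparison object.
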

\begin{proof}
We have to show, that for $\ve >0$ the function $e^{-tL}\ind_{[0,\ve]}$ belongs to $L^2$ with respect to the speed measure of the diffusion. Without loss of generality we can assume that $\ve<1$.

The strategy for the proof is as follows. First, we perform the reduction to a Bessel generator with potential term. Applying the Feynman-Kac representation of the Bessel semigroup with the new potential we consider the semigroup and the potential term separately. For the semigroup part we use facts about Bessel functions whereas for the potential part we perform a Khasminski-type argument.

The semigroup and generator corresponding to the differential expression $\tau^{\nu,c}$ will be denoted by $(P_t)_{t\geq 0}$ and $L$.

\textbf{Stepy 1} (reduction of drift to potential): Let $C(x):=\int_1^x c(s)\,ds$ and denote by $\rho^{\nu,c}$ the speed measure of the diffusion associated to $\tau^{\nu,c}$. Define the family of operators $(\tilde P_t)_{t\geq 0}$ on $L^2((0,\infty),\rho^{\nu,c})$ via
\[
\tilde P_t f(x) = e^{-C(x)/2} \left(P_t e^{C(\cdot)/2}f(\cdot)\right)(x).
\]
Since
\[
\int_0^\infty \left|f(x)e^{-C(x)/2}\right|\rho^{\nu,c}(dx) = \int_0^\infty |f(x)|^2\rho^{\nu,0}(x)<\infty,
\]
we have that $\tilde P_t f\in L^2((0,\infty),\rho^{\nu,0})$. We see by a simple calculation that the generator $\tilde L$ of the semigroup $(\tilde P_t)_{t\geq 0}$ acts on smooth functions with compact support via
\[
\tilde L f(x) = \tau^\nu f(x) + \left[\frac{1}{2} (c^2(x)-c'(x))-\frac{(-2\nu+1)c(x)}{x}\right]f(x).
\]
As a result $(\tilde P_t)_{t\geq 0}$ is nothing but the Bessel semigroup corresponding to the index $\nu$ with additional potential. Let us see what we gain from this by inverting the transformation
\begin{equation}\label{e:FK}
\begin{split}
e^{-tL}\ind_{[0,\ve]}(x) &= P_t \ind_{[0,\ve]}(x)\\
&= e^{C(x)/2}\tilde P_t(e^{-C(\cdot)/2}\ind_{[0,\ve]})(x)\\
&= e^{C(x)/2}\E^{x}_\nu\left[e^{-C(X_t)/2}\ind_{[0,\ve]}(X_t)e^{-\int_0^t\frac{1}{2}(c^2(X_s)-c'(X_s)) -\frac{(-2\nu+1)c(X_s)}{X_s}\,ds}, T_0>t\right],
\end{split}
\end{equation}
where for the final step we used the Feynman-Kac representation for the Bessel semigroup with potential  $\left[\frac{1}{2} (c^2(x)-c'(x))-\frac{(-2\nu+1)c(x)}{x}\right]$. The right hand side of \eqref{e:FK} can be split using H{\"o}lder's inequality into the product of the following two terms
\begin{equation}\label{e:Bessel_semi}
e^{C(x)/2}\E^x_\nu\left[\ind_{[0,\ve]}(X_t),T_0>t\right]^{\frac{1}{2}}
\end{equation}
\begin{equation}\label{e:Bessel_pot}
\E^x_\nu\left[e^{-C(X_t)/2}e^{-\int_0^t(c^2(X_s)-c'(X_s)) -2\frac{(-2\nu+1)c(X_s)}{X_s}\,ds}\ind_{[0,\ve]}(X_t),T_0>t\right]^{\frac{1}{2}}.
\end{equation}
In the following two steps we estimate the two factors \eqref{e:Bessel_semi} and \eqref{e:Bessel_pot} separately, in order to show the square integrability in $x$.

\textbf{Step 2} (estimating the semigroup factor \eqref{e:Bessel_semi}): To estimate this factor we neet to collect some facts on Bessel processes. Let $I_\nu$ denote the modified Bessel function of the first kind, i.e.
\[
I_\nu = \sum_{k=0}^\infty \frac{(x/2)^{\nu+2k}}{k!\Gamma(\nu+k+1)}
\]
and let $R^\nu$ denote the Bessel process. For $\nu\geq 0$ the process $R^\nu$ has the transition function
\[
p^\nu(t,x,y) = \frac{1}{2t}(xy)^{-\nu}e^{-\frac{x^2+y^2}{2t}}I_\nu\left(\frac{xy}{t}\right)
\]
with respect to the measure $m(dy)=2y^{2\nu+1}dy$. From the $h$-transform property (see page 75 of \cite{BS12}) we get that the transition function for the process $R^{-\nu}$ is given by
\begin{equation}\label{e:R-nu}
\begin{split}
p^{-\nu}(t,x,y) &= \frac{1}{h(x)} p^\nu(t,x,y)h(y) = x^{2\nu}\frac{1}{2t}(xy)^{-\nu}e^{-\frac{x^2+y^2}{2t}}I_\nu\left(\frac{xy}{t}\right) y^{-2\nu}\\
&=\frac{1}{2t} x^\nu e^{-\frac{x^2+y^2}{2t}}I_\nu\left(\frac{xy}{t}\right) y^{-3\nu}
\end{split}
\end{equation}
where $h(x):=\frac{1}{2\nu}x^{-2\nu}$. Note that since we need $0$ to be an exit boundary, we are only interested in $R^{-\nu}$ with $\nu\geq 1$. For the estimates we need some more basic facts on Bessel functions. Observe that for $y\in (0,1)$:
\[
I_\nu\left(\frac{xy}{t}\right)=\sum_{k=0}^\infty\frac{\left(\frac{xy}{2t}\right)^{\nu+2k}}{k!\Gamma(\nu+k+1)}\leq y^\nu\sum_{k=0}^\infty\frac{\left(\frac{x}{2t}\right)^{\nu+2k}}{k!\Gamma(\nu+k+1)} = y^\nu I_\nu(x/t).
\]
We can then obtain (recalling that $\nu$ is negative) an upper bound for \eqref{e:Bessel_semi}
\begin{equation*}
\begin{split}
e^{C(x)/2}\E^x_\nu\left[\ind_{[0,\ve]}(X_t),T_0>t\right]&= e^{C(x)/2}\int_0^\ve p^\nu(t,x,y)\,m(dy)\\
&=e^{C(x)/2}\int_0^\ve \frac{1}{2t}x^\nu e^{-\frac{x^2+y^2}{2t}}I_\nu\left(\frac{xy}{t}\right)y^{-3\nu}y^{2\nu+1}\,dy\\
&\leq e^{C(x)/2} \frac{1}{2t}x^\nu e^{-\frac{x^2}{2t}}I_\nu(x/t)\int_0^\ve e^{-\frac{y^2}{2t}}y\,dy.
\end{split}
\end{equation*}
As such, it suffices to show that $e^{C(x)/2} \left(x^\nu e^{-\frac{x^2}{2t}}I_\nu(x/t)\right)^{\frac{1}{2}}\in L^2((0,\infty),\rho^{-\nu,c})$. This follows from the asymptotics
\[
I_\nu(z) \sim \frac{1}{\Gamma(\nu+1)}\left(\frac{z}{2}\right)^\nu ~\text{for small}~z
\]
and
\[
I_\nu(z) \sim \frac{e^z}{\sqrt{2\pi z}} ~\text{for large}~z.
\]

Step 3 (estimating the Bessel potential factor \eqref{e:Bessel_pot}): As \eqref{e:Bessel_semi} is square integrable it suffices to show that \eqref{e:Bessel_pot} is bounded in $x$. By the assumptions on $c$, it suffices to show that
\[
\sup_{x\in (0,\infty)}\E^x_\nu\left[ e^{ 2\int_0^t\ind_{[0,1/2]}(X_s)\frac{(-2\nu+1)c(X_s)}{X_s}\,ds}\ind_{[0,\ve]}(X_t),T_0>t\right]<\infty.
\]
By the explicit expression for the heat kernel $p^\nu(t,x,y)$ for negative $\nu$ we see that
\begin{equation}\label{e:lim_semi}
\lim_{t\to 0}\sup_{x\in (0,\infty)} \E^x_\nu [f(X_t),T_0>t] = \lim_{t\to 0}\sup_{x\in (0,\infty)} \int_0^\infty p^\nu(t,x,y) f(y)\,m(dy)=0,
\end{equation}
where the function $f:(0,\infty)\to (0,\infty)$ is defined by
\[
f(y) = C\ind_{(0,\ve)}(y)y^{-1},
\]
Observe that \eqref{e:lim_semi} gives for $t>0$ small enough and some $0<a<1$
\begin{equation*}
\begin{split}
\E^x_\nu\left[e^{\int_0^t f(X_s)\,ds}, T_0>t\right] &= \sum_{n\geq 0} \frac{1}{n!} \E^x_\nu \left[\left(\int_0^t f(X_s)\,ds\right)^n,T_0>t\right]\leq \sum_{n\geq 0} a^n<\infty.
\end{split}
\end{equation*}
This implies that for small $t>0$
\[
\sup_{x\in (0,\infty)}\E^x_\nu\left[ e^{ 2\int_0^t\ind_{[0,1/2]}(X_s)\frac{(-2\nu+1)c(X_s)}{X_s}\,ds}\ind_{[0,\ve]}(X_t),T_0>t\right]<\infty.
\]
By the Markov property the above remains true for every $t>0$. This completes the proof of the first assertion from Assumption \ref{a:1}. The $L^2$ convergence of $e^{-tL}\ind_{[\ve,z]}$ follows from similar computations which are left to the reader.
\end{proof}
\subsection{Entrance boundary at $\infty$}\label{s:entrance} We are now ready to prove the main result for entrance boundaries at $\infty$.

\entrance*

The strategy used is similar to the proof of Theorem \ref{t:exit_natural}. Things are simplified because by the spectral representation of Theorem \ref{t:exit_spectral} we obtain
\begin{equation*}
\begin{split}
\lim_{t\to \infty}\frac{e^{-tL}\ind_A(x)}{e^{-tL}\ind_{(0,\infty)}(x)} &= \lim_{t\to \infty} \frac{e^{\lambda_0 t} \int_A p(t,x,y)\rho(dy)}{e^{\lambda_0 t} \int_0^\infty p(t,x,y)\rho(dy)}\\
&=  \lim_{t\to \infty} \frac{\int_A \sum_{k}e^{(\lambda_0-\lambda_k) t}u_{\lambda_k}(x)u_{\lambda_k}(y) \rho(dy)}{\int_0^\infty \sum_{k}e^{(\lambda_0-\lambda_k) t}u_{\lambda_k}(x)u_{\lambda_k}(y) \rho(dy)}
\end{split}
\end{equation*}
which by the order of the eigenvalues already proves the theorem if we are allowed to interchange the limit and the integration. We make use of the parabolic Harnack inequality, which in this setting seems to go back to \cite{C09}.
\begin{proof}
We have seen in Theorem \ref{t:exit_spectral} that for $\infty$ being an entrance boundary the spectrum of $L$ is purely discrete and the lowest eigenfunction is integrable with respect to $\rho$. Moreover, it follows from the spectral representation of Theorem \ref{t:exit_spectral} that
\begin{equation}\label{e:lim_transition}
\lim_{t\to \infty} e^{\lambda_0 t} p(t,x,y) = cu_{\lambda_0}(x)u_{\lambda_0}(y),
\end{equation}
where $c$ is a normalizing and $u_{\lambda_0}$ is the unique positive eigenfunction, normalized to $\|u_{\lambda_0}\|_{L^2((0,\infty),\rho)}=1$, corresponding to the lowest eigenvalue $\lambda_0$. Using the parabolic Harnack principle as in \cite{C09} (see the proof of their Lemma 5.3) and \cite{KS12} we get for some locally bounded function $\Theta_0:(0,\infty)\to (0,\infty)$, every $z\in (0,\infty)$ with $|z-x|<\delta(x)=\frac{1}{2}\wedge \frac{x}{4}$ and every $y\in (0,\infty)$ that
\[
p(t,x,y)\leq \Theta_0 (x)  p(t+1,z,y).
\]
This implies
\begin{equation}\label{e:Theta}
\begin{split}
p(t,x,y) & = \frac{\int_{|z-x|<\delta(x)}p(t,z,y)u_{\lambda_0}(z)\rho(dz)}{\int_{|z-x|<\delta(x)}u_{\lambda_0}(z)\rho(dz)}\\
&\leq \Theta_0(x) \frac{\int_{|z-x|<\delta(x)}p(t+1,z,y)u_{\lambda_0}(z)\rho(dz)}{\int_{|z-x|<\delta(x)}u_{\lambda_0}(z)\rho(dz)}\\
&\leq \Theta_0(x) \frac{\int p(t+1,z,y)u_{\lambda_0}(z)\rho(dz)}{\int_{|z-x|<\delta(x)}u_{\lambda_0}(z)\rho(dz)}\\
&\leq \frac{\Theta_0(x)}{\int_{|z-x|<\delta(x)}u_{\lambda_0}(z)\rho(dz)} e^{-\lambda_0(t+1)}u_{\lambda_0}(y)
\end{split}
\end{equation}
which is integrable as $u_{\lambda_0}$ is integrable. We translate the probabilistic object of interest to the above. For any compactly supported initial distribution $\nu$ one has
\begin{equation*}
\begin{split}
\lim_{t\to \infty} \Pr^\nu(X_t\in A~|~T_0>t) &= \lim_{t\to \infty}\frac{ \Pr^\nu(X_t\in A, T_0>t)}{\Pr^\nu(X_t\in (0,\infty), T_0>t)}\\
&=\lim_{t\to \infty} \frac{\left\langle e^{-tL}\ind_A(\cdot),\nu\right\rangle}{\left\langle e^{-tL}\ind_{(0,\infty)}A(\cdot),\nu\right\rangle}\\
&= \lim_{t\to \infty}\frac{e ^{\lambda_0 t}\left\langle \int_Ap(t,\cdot,y)\rho(dy),\nu\right\rangle}{e ^{\lambda_0 t}\left\langle \int_0^\infty p(t,\cdot,y)\rho(dy),\nu\right\rangle}
\end{split}
\end{equation*}
Taking account of the above bounds we can use dominated convergence to obtain
\begin{equation*}
\begin{split}
\lim_{t\to \infty} \Pr^\nu(X_t\in A~|~T_0>t) &= \frac{\int_0^\infty \int_A \lim_{t\to\infty}e^{\lambda_0 t}p(t,x,y)\nu(dx)\rho(dy)}{\int_0^\infty \int_0^\infty \lim_{t\to\infty}e^{\lambda_0 t}p(t,x,y)\nu(dx)\rho(dy)}\\
&= \frac{\int_0^\infty \int_A u_{\lambda_0}(x)u_{\lambda_0}(y)\nu(dx)\rho(dy)}{\int_0^\infty \int_0^\infty u_{\lambda_0}(x)u_{\lambda_0}(y)\nu(dx)\rho(dy)}\\
&= \frac{\int_A u_{\lambda_0}(y)\rho(dy)}{\int_0^\infty u_{\lambda_0}(y)\rho(dy)}
\end{split}
\end{equation*}
which completes the proof.

The above proof made use of the fact that $\nu$ is compactly supported. One can extend this to any initial distribution $\nu_1$ on $(0,\infty)$ by using the methods from \cite{C09}[Proposition 7.7]. As a result the quasistationary distribution is unique and attracts all initial distributions $\nu_1$ with support on $(0,\infty)$
\end{proof}

\subsection{An application to a diffusion coming from population dynamics}\label{s:bio}
We apply Theorem \ref{t:exit_entrance} to study a process arising in population dynamics.
Consider the SDE
\begin{equation}\label{e:N}
dN_t =(\mu N_t-\kappa N_t^2)\,dt + \sigma N_t\,dW_t+\sqrt{\gamma N_t}\,dB_t
\end{equation}
where $(W_t)_{t\geq 0}$ and $(B_t)_{t\geq 0}$ are independent Brownian motions, $\mu, \kappa, \sigma, \gamma\in (0,\infty)$ and $N_0>0$. This process comes up naturally as a scaling limit of branching diffusions in a random environment. See \cite{H11}, \cite{BH12}, \cite{A05} for more details.

The quadratic variation of the process $(N_t)_{t\geq 0}$ is
\begin{equation*}
d[N]_t = (\gamma N_t + \sigma^2 N_t^2)\,dt.
\end{equation*}
As a result, there exists a Brownian motion $(U_t)_{t\geq 0}$ such that
\begin{equation}\label{E:N_U}
dN_t =(\mu N_t-\kappa N_t^2)\,dt + \sqrt{\gamma N_t + \sigma^2 N_t^2} dU_t.
\end{equation}

Our diffusion $(N_t)_{t\geq 0}$ has drift $b(z)=\mu z-\kappa z^2$ and diffusion coefficient $\sigma(z)=\sqrt{\gamma z+\sigma^2z^2}$. If $a,c>0$ we can find the scale function
\begin{eqnarray*}
s(x)&=&\int_c^x \exp\left(-\int_a^y \frac{2b(z)}{\sigma^2(z)}\,dz\right)\,dy\\
&=& \int_c^x \exp\left(-\int_a^y \frac{2(\mu z-\kappa z^2)}{\gamma z+\sigma^2z^2}\,dz\right)\,dy\\
&=& \int_c^x \left(\frac{\gamma+\sigma^2 y}{\gamma+\sigma^2a}\right)^{-2\left(\frac{\mu}{\sigma^2}+\frac{\gamma\kappa}{\sigma^4}\right)}e^{\frac{2\kappa}{\sigma^2}(y-a)}\,dy.
\end{eqnarray*}
and the density of the speed measure
\begin{eqnarray*}
m(z) &=& \frac{1}{\sigma^2(z) s'(z)} \\
&=& \left(\frac{\gamma+\sigma^2 z}{\gamma+\sigma^2a}\right)^{2\left(\frac{\mu}{\sigma^2}+\frac{\gamma\kappa}{\sigma^4}\right)}e^{-\frac{2\kappa}{\sigma^2}(z-a)}\frac{1}{\gamma z+\sigma^2 z^2}.
\end{eqnarray*}
We make use of \cite{KT81} to classify the boundary points $0$ and $\infty$.
\begin{proposition}\label{p:pop_bound}
Consider the diffusion given by \eqref{E:N_U}. The point $0$ is an exit boundary and the point $\infty$ is an entrance boundary.
\end{proposition}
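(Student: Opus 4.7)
The plan is to apply Feller's boundary classification directly to the explicit scale density $s'(y)$ and speed density $m(z)$ already computed above. Fix a reference point $c\in(0,\infty)$. The classification of the left endpoint $0$ rests on the two integrals
\[
\Sigma(0)=\int_0^c s'(y)\int_y^c m(z)\,dz\,dy,\qquad N(0)=\int_0^c m(y)\int_y^c s'(z)\,dz\,dy,
\]
with $0$ an exit boundary precisely when $\Sigma(0)<\infty$ and $N(0)=\infty$. For the right endpoint $\infty$ one uses
\[
\Sigma(\infty)=\int_c^\infty s'(y)\int_c^y m(z)\,dz\,dy,\qquad N(\infty)=\int_c^\infty m(y)\int_c^y s'(z)\,dz\,dy,
\]
with $\infty$ an entrance boundary precisely when $\Sigma(\infty)=\infty$ and $N(\infty)<\infty$. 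These are the general form of the Feller criteria already invoked in Section \ref{s:diffusions}, adapted to a non-constant diffusion coefficient.

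The first substantive step is to record the asymptotic behaviour of $s'$ and $m$ at each endpoint. Near $z=0$ one has
\[
\frac{2b(z)}{\sigma^2(z)}=\frac{2(\mu-\kappa z)}{\gamma+\sigma^2 z}\longrightarrow \frac{2\mu}{\gamma},
\]
so $s'(y)$ is continuous and bounded away from $0$ and $\infty$ on a neighbourhood of the origin, while $m(z)\sim C_0/z$ as $z\downarrow 0$ for some $C_0>0$. Near $z=\infty$ the same ratio tends to $-2\kappa/\sigma^2$, yielding $s'(y)\asymp e^{2\kappa y/\sigma^2}$ and $m(z)\asymp z^{-2}e^{-2\kappa z/\sigma^2}$ as $z\to\infty$.

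With these asymptotics the classification at $0$ is essentially read off: the inner integral $\int_y^c m(z)\,dz$ diverges like $\log(1/y)$, which multiplied by a bounded $s'(y)$ remains integrable at the origin, so $\Sigma(0)<\infty$; meanwhile $\int_y^c s'(z)\,dz$ tends to a finite positive constant as $y\downarrow 0$, and multiplied by $m(y)\sim C_0/y$ this is not integrable, so $N(0)=\infty$. At $\infty$, $\int_c^y m(z)\,dz$ converges to a finite positive constant as $y\to\infty$, so the integrand of $\Sigma(\infty)$ grows like $e^{2\kappa y/\sigma^2}$ and $\Sigma(\infty)=\infty$; a Laplace-type asymptotic gives $\int_c^y s'(z)\,dz\sim \frac{\sigma^2}{2\kappa}e^{2\kappa y/\sigma^2}$, and after multiplying by $m(y)$ the exponentials cancel, leaving an integrand for $N(\infty)$ that decays like $C/y^2$, so $N(\infty)<\infty$.

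I expect the only delicate point to be the tail bookkeeping at $\infty$: one must verify that the polynomial and exponential factors in $s'$ and $m$ align as claimed so that the four Feller integrals can be evaluated by inspection. Aside from this routine asymptotic accounting, no ingredient beyond the explicit formulas on display is needed.
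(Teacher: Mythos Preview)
Your proposal is correct and follows essentially the same route as the paper: both compute the explicit scale density $s'$ and speed density $m$, determine their asymptotics at each endpoint, and feed these into Feller's classification (the paper cites Table~6.2 of \cite{KT81}). Your pairs $(\Sigma(0),N(0))$ and $(\Sigma(\infty),N(\infty))$ are equivalent, via Fubini, to the quantities the paper checks ($M(0,x]$, $\Sigma(0)$, $s(\infty)$, $\bar N(\infty)$). One small caution: your stated asymptotics at infinity, $s'(y)\asymp e^{2\kappa y/\sigma^2}$ and $m(z)\asymp z^{-2}e^{-2\kappa z/\sigma^2}$, omit the polynomial factor $(\gamma+\sigma^2 y)^{\mp C}$ with $C=2(\mu/\sigma^2+\gamma\kappa/\sigma^4)$ that is visible in the explicit formula; you flag this yourself, and indeed these polynomial factors cancel in the product $m(y)\int_c^y s'(z)\,dz$, so your conclusion $N(\infty)<\infty$ stands.
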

\begin{proof}
Define
\begin{equation*}
M(0,x]:=\lim_{l\downarrow 0} \int_l^x m(z)\,dz
\end{equation*}
and
\begin{equation*}
\Sigma(0) := \int_0^x\left(\int_0^\xi s'(\eta)\,d\eta\right)m(\xi)\,d\xi.
\end{equation*}
Note that for small $z>0$
\begin{equation*}
m(z)\sim \frac{1}{\gamma z + \sigma^2 z^2}
\end{equation*}
and $s'(z)$ is bounded. It is therefore easy to see that
\begin{eqnarray*}
M(0,x]&\sim&\lim_{l\downarrow 0} \int_l^x \frac{1}{\gamma z + \sigma^2 z^2}\,dz\\
&\sim& \lim_{l\downarrow 0} \ln\frac{z}{\gamma+\sigma^2 z} \big |_l^x\\
&\sim& \lim_{l\downarrow 0} \left(- \ln\frac{l}{\gamma+\sigma^2 l}\right)\\
&=& \infty
\end{eqnarray*}
and
\begin{eqnarray*}
\Sigma(0)&=& \int_0^x \left(\int_0^\xi  \left(\frac{\gamma+\sigma^2 \eta}{\gamma+\sigma^2a}\right)^{-2\left(\frac{\mu}{\sigma^2}+\frac{\gamma\kappa}{\sigma^4}\right)}e^{\frac{2\kappa}{\sigma^2}(\eta-a)}  \,d\eta\right)m(\xi)   \,d\xi\\
&\sim& \int_0^x \xi m(\xi)   \,d\xi\\
&\sim& \int_0^x \xi \frac{1}{\gamma \xi + \sigma^2 \xi^2}\,d\xi\\
&=& \int_0^x\frac{1}{\gamma  + \sigma^2 \xi}\,d\xi\\
&<&\infty.
\end{eqnarray*}
According to Table 6.2 in \cite{KT81} the above imply that $0$ is an exit boundary.

Let us look at the boundary at $\infty$.
To simplify notation, without loss of generality, for any $\xi>a$ we can write
\begin{eqnarray*}
s'(\xi)&=& (1+\xi)^{-C} e^{D\xi}\\
m(\xi) &=& \frac{1}{\gamma \xi + \sigma^2 \xi} (1+\xi)^{C} e^{-D\xi}
\end{eqnarray*}
for some $D,C>0$.
Then, for some fixed large $x\in (0,\infty)$ with $x>a$ compute
\begin{eqnarray*}
\bar N(\infty)&:=& \int_x^\infty\left(\int_\xi^\infty m(\xi)\,d\xi\right)s'(\eta)\,d\eta\\
&=& \int_x^\infty \left(\int_\eta^\infty \frac{1}{\gamma \xi + \sigma^2 \xi^2} (1+\xi)^{C} e^{-D\xi} \,d\xi\right) (1+\eta)^{-C} e^{D\eta}\,d\eta\\
&\leq& \int_x^\infty\left(\int_\eta^\infty  (1+\xi)^{C} e^{-D\xi} \,d\xi\right) \frac{1}{\gamma \eta + \sigma^2 \eta^2}(1+\eta)^{-C} e^{D\eta}\,d\eta
\end{eqnarray*}
Using integration by parts
\begin{equation*}
\int_\eta^\infty  (1+\xi)^{C} e^{-D\xi} \,d\xi = \frac{1}{D} e^{-D\eta} (1+\eta)^C + \int_\eta^\infty \frac{1}{D} e^{-D\xi} C(1+\xi)^{-C-1}\,d\xi
\end{equation*}
and therefore
\begin{equation*}
\int_\eta^\infty  (1+\xi)^{C} e^{-D\xi} \,d\xi \simeq \frac{1}{D} e^{-D\eta} (1+\eta)^C  ~\text{for}~\eta\rightarrow\infty.
\end{equation*}
As a result we see that
\begin{eqnarray*}
\bar N(\infty)&\simeq& \int_x^\infty\left(\frac{1}{D} e^{-D\eta} (1+\eta)^C\right) \frac{1}{\gamma \eta + \sigma^2 \eta^2}(1+\eta)^{-C} e^{D\eta}\,d\eta\\
&\simeq& \int_x^\infty \frac{1}{\gamma \eta + \sigma^2 \eta^2}\,d\eta\\
&<&\infty.
\end{eqnarray*}
One also clearly has
\begin{eqnarray*}
s(\infty) &=& \int_c^\infty \left(\frac{\gamma+\sigma^2 y}{\gamma+\sigma^2a}\right)^{-2\left(\frac{\mu}{\sigma^2}+\frac{\gamma\kappa}{\sigma^4}\right)}e^{\frac{2\kappa}{\sigma^2}(y-a)}\,dy\\
&=&\infty.
\end{eqnarray*}
Making use of Table 6.2 from \cite{KT81} we conclude that $\infty$ is an entrance boundary.

\end{proof}
Combining Proposition \ref{p:pop_bound} and Theorem \ref{t:exit_entrance} one gets the following result.
\begin{corollary}
Consider the diffusion given by \eqref{e:N}. Then $(N_t)_{t\geq 0}$ converges to its unique quasistationary distribution.
\end{corollary}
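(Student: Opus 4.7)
The plan is a direct two-step application of what has already been developed: Proposition~\ref{p:pop_bound} classifies the boundaries of $(N_t)_{t\geq 0}$, and Theorem~\ref{t:2} supplies a Yaglom limit for any one-dimensional diffusion on $(0,\infty)$ whose boundaries are exit at $0$ and entrance at $\infty$. So in principle the corollary reduces to invoking these two results in succession.

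The one technical wrinkle is that the spectral framework in Sections~\ref{s:analytic} and~\ref{s:Yaglom} is formulated for SDEs of the canonical form $dX_t = dB_t - b(X_t)\,dt$ with unit diffusion coefficient, whereas $(N_t)_{t\geq 0}$ satisfies \eqref{E:N_U} with the state-dependent coefficient $\sigma(z)=\sqrt{\gamma z + \sigma^2 z^2}$. To reduce to the canonical form I would apply the Lamperti transformation
\[
X_t := F(N_t), \qquad F(z) := \int_{0}^{z}\frac{dy}{\sqrt{\gamma y + \sigma^2 y^2}}.
\]
Since $F'(y)\sim (\gamma y)^{-1/2}$ as $y\to 0^+$ is integrable near zero, while $F'(y)\sim (\sigma y)^{-1}$ at infinity is not, $F$ extends to a smooth increasing bijection of $(0,\infty)$ onto $(0,\infty)$ with $F(0+)=0$ and $F(\infty)=\infty$. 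It\^o's formula then gives for $(X_t)$ an SDE of the canonical form $dX_t = dU_t - b_X(X_t)\,dt$ with $b_X$ continuous on $(0,\infty)$ (the drift will generally be singular at $0$, but this is exactly the setting the paper is designed to handle).

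Because Feller's boundary classification depends only on the intrinsic scale function and speed measure of the diffusion and is therefore invariant under the diffeomorphism $F$, the conclusions of Proposition~\ref{p:pop_bound} transfer verbatim to $(X_t)$: zero is exit and infinity is entrance. Theorem~\ref{t:2} then yields convergence of $(X_t)$ to a quasistationary distribution $\bar\nu$ on $(0,\infty)$. Pushing forward through $F^{-1}$ delivers the conclusion for $(N_t)_{t\geq 0}$: the events $\{T_0^N > t\}$ and $\{T_0^X > t\}$ coincide, so conditioning on non-absorption commutes with the change of variable, and the image measure $F^{-1}_\ast\bar\nu$ is the desired quasistationary limit for $N$. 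The only step requiring genuine analysis is the boundary classification of $(N_t)_{t\geq 0}$, which has already been carried out in Proposition~\ref{p:pop_bound}; the rest is bookkeeping.
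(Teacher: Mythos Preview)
Your proposal is correct and follows the same two-step strategy as the paper, which simply states that the corollary follows by combining Proposition~\ref{p:pop_bound} with Theorem~\ref{t:2}. You are, however, more careful than the paper on one point: you explicitly carry out the Lamperti change of variables $X_t=F(N_t)$ to reduce to a unit-diffusion-coefficient SDE before invoking Theorem~\ref{t:2}, whereas the paper applies the theorem directly to $(N_t)$ without comment. This reduction is indeed needed for a literal application of the framework of Sections~\ref{s:analytic}--\ref{s:Yaglom}, and the paper only acknowledges it later (in a remark in Section~\ref{s:LV}); so your version is the more complete of the two.
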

\begin{remark}
We note that the methods from \cite{C09} are not easy to use for the diffusion from \eqref{e:N} because one needs to check extra assumptions. In our case having that $0$ is exit and $\infty$ is entrance is enough to give us the desired result.
\end{remark}

{\bf Acknowledgments.}  We thank Nicolas Champagnat, Leif D\"{o}ring, Alison Etheridge, Steve Evans, Fritz Gesztesy, Sebastian Schreiber and Denis Villemonais for very insightful comments and suggestions.

\bibliographystyle{amsalpha}
\bibliography{qsd}

\newcommand{\etalchar}[1]{$^{#1}$}
\providecommand{\bysame}{\leavevmode\hbox to3em{\hrulefill}\thinspace}
\providecommand{\MR}{\relax\ifhmode\unskip\space\fi MR }
\providecommand{\MRhref}[2]{%
  \href{http://www.ams.org/mathscinet-getitem?mr=#1}{#2}
}
\providecommand{\href}[2]{#2}
\begin{thebibliography}{CMSM95}

\bibitem[AG54]{AG54}
N.~I. Achieser and I.~M. Glasmann, \emph{Theorie der {L}inearen {O}peratoren im
  {H}ilbert-{R}aum}, vol. 1054, Berlin Akademie-Verlag, 1954.

\bibitem[BC15]{BC15}
M.~Benaim and B.~Cloez, \emph{A stochastic approximation approach to
  quasi-stationary distributions on finite spaces}, Electron. Commun. Probab.
  \textbf{20} (2015), 13 pp.

\bibitem[BH12]{BH12}
C.~Boeinghoff and M.~Hutzenthaler, \emph{Branching diffusions in random
  environment}, Markov Processes and Related Fields \textbf{18} (2012), no.~2.

\bibitem[BS12]{BS12}
A.~N. Borodin and P.~Salminen, \emph{Handbook of brownian motion-facts and
  formulae}, Birkh{\"a}user, 2012.

\bibitem[CCL{\etalchar{+}}09]{C09}
P.~Cattiaux, P.~Collet, A.~Lambert, S.~Mart{\'\i}nez, S.~M{\'e}l{\'e}ard, and
  J.~San Mart{\'\i}n, \emph{Quasi-stationary distributions and diffusion models
  in population dynamics}, The Annals of Probability (2009), 1926--1969.

\bibitem[CCM16]{CCM16}
J.-R. Chazottes, P.~Collet, and S.~M{\'e}l{\'e}ard, \emph{Sharp asymptotics for
  the quasi-stationary distribution of birth-and-death processes}, Probability
  Theory and Related Fields \textbf{164} (2016), no.~1, 285--332.

\bibitem[CCPV16]{CCV16}
N.~Champagnat, A.~Coulibaly-Pasquier, and D.~Villemonais, \emph{Exponential
  convergence to quasi-stationary distribution for multi-dimensional diffusion
  processes}, arXiv preprint arXiv:1603.07909 (2016).

\bibitem[CMSM95]{CMSM95}
P.~Collet, S.~Mart{\'\i}nez, and J.~San~Mart{\'\i}n, \emph{Asymptotic laws for
  one-dimensional diffusions conditioned to nonabsorption}, The Annals of
  Probability (1995), 1300--1314.

\bibitem[CV15a]{CV152}
N.~Champagnat and D.~Villemonais, \emph{Exponential convergence to
  quasi-stationary distribution for absorbed one-dimensional diffusions with
  killing}, arXiv preprint arXiv:1510.05794 (2015).

\bibitem[CV15b]{CV15}
\bysame, \emph{Uniform convergence of conditional distributions for absorbed
  one-dimensional diffusions}, arXiv preprint arXiv:1506.02385 (2015).

\bibitem[CV16]{CV14}
\bysame, \emph{Exponential convergence to quasi-stationary distribution and
  {$Q$}-process}, Probability Theory and Related Fields \textbf{164} (2016),
  no.~1, 243--283.

\bibitem[DM14]{DM14}
P.~Diaconis and L.~Miclo, \emph{On quantitative convergence to
  quasi-stationarity}, arXiv preprint arXiv:1406.1805 (2014).

\bibitem[FOT10]{FOT10}
M.~Fukushima, Y.~Oshima, and M.~Takeda, \emph{Dirichlet forms and symmetric
  {M}arkov processes}, vol.~19, Walter de Gruyter, 2010.

\bibitem[GZ06]{GZ06}
F.~Gesztesy and M.~Zinchenko, \emph{On spectral theory for {S}chr{\"o}dinger
  operators with strongly singular potentials}, Mathematische Nachrichten
  \textbf{279} (2006), no.~9-10, 1041--1082.

\bibitem[Hut11]{H11}
M.~Hutzenthaler, \emph{Supercritical branching diffusions in random
  environment}, Electron. Commun. Probab \textbf{16} (2011), no.~2.

\bibitem[KS12]{KS12}
M.~Kolb and D.~Steinsaltz, \emph{Quasilimiting behavior for one-dimensional
  diffusions with killing}, The Annals of Probability \textbf{40} (2012),
  no.~1, 162--212.

\bibitem[KT81]{KT81}
S.~Karlin and H.~Taylor, \emph{A second course in stochastic processes},
  Academic Press, Orlando, 1981.

\bibitem[Lam05]{A05}
A.~Lambert, \emph{The branching process with logistic growth}, The Annals of
  Applied Probabability \textbf{15} (2005), no.~2, 1506--1535.

\bibitem[Lit12]{L12}
J.~Littin, \emph{Uniqueness of quasistationary distributions and discrete
  spectra when $\infty$ is an entrance boundary and 0 is singular}, Journal of
  Applied Probability \textbf{49} (2012), no.~3, 719--730.

\bibitem[Man61]{M61}
P.~Mandl, \emph{Spectral theory of semi-groups connected with diffusion
  processes and its application}, Czechoslovak Mathematical Journal \textbf{11}
  (1961), no.~4, 558--569.

\bibitem[MSM01]{MSM01}
S.~Mart{\'\i}nez and J.~San~Mart{\'\i}n, \emph{Rates of decay and h-processes
  for one dimensional diffusions conditioned on non-absorption}, Journal of
  Theoretical Probability \textbf{14} (2001), no.~1, 199--212.

\bibitem[Muc72]{M72}
B.~Muckenhoupt, \emph{Hardy's inequality with weights}, Studia Mathematica
  \textbf{1} (1972), no.~44, 31--38.

\bibitem[MV12]{MV12}
S.~M{\'e}l{\'e}ard and D.~Villemonais, \emph{Quasi-stationary distributions and
  population processes}, Probability Surveys \textbf{9} (2012), 340--410.

\bibitem[Pin95]{P95}
R.~G. Pinsky, \emph{Positive harmonic functions and diffusion}, vol.~45,
  Cambridge university press, 1995.

\bibitem[Pin09]{P09}
\bysame, \emph{Explicit and almost explicit spectral calculations for diffusion
  operators}, Journal of Functional Analysis \textbf{256} (2009), no.~10,
  3279--3312.

\bibitem[SE04]{ES04}
D.~Steinsaltz and S.~N. Evans, \emph{Markov mortality models: implications of
  quasistationarity and varying initial distributions}, Theoretical Population
  Biology \textbf{65} (2004), no.~4, 319--337.

\bibitem[SE07]{ES07}
D.~Steinsaltz and S.~Evans, \emph{Quasistationary distributions for
  one-dimensional diffusions with killing}, Transactions of the American
  Mathematical Society \textbf{359} (2007), no.~3, 1285--1324.

\bibitem[Sim93]{S93}
B.~Simon, \emph{Large time behavior of the heat kernel: On a theorem of
  {C}havel and {K}arp}, Proceedings of the American Mathematical Society
  \textbf{118} (1993), no.~2, 513--514.

\bibitem[Wei67]{W67}
J.~Weidmann, \emph{Zur {S}pektraltheorie von {S}turm-{L}iouville-{O}peratoren},
  Mathematische Zeitschrift \textbf{98} (1967), no.~4, 268--302.

\bibitem[Wei00]{WI}
\bysame, \emph{{L}ineare {O}peratoren in {H}ilbertr{\"a}umen: {T}eil 1
  {G}rundlagen}, Springer-Verlag, 2000.

\bibitem[Wei03]{WII}
\bysame, \emph{{L}ineare {O}peratoren in {H}ilbertr{\"a}umen: {T}eil 2
  {A}nwendungen}, Springer-Verlag, 2003.

\bibitem[Wie85]{W85l}
N.~Wielens, \emph{The essential self-adjointness of generalized schr{\"o}dinger
  operators}, Journal of functional analysis \textbf{61} (1985), no.~1,
  98--115.

\end{thebibliography}

\end{document}